\newcommand{\ie}{{\em i.e.\ }}
\newcommand{\eg}{{\em e.g.\ }}
\newcommand{\cf}{{\em cf.\ }}
\newtheorem*{theorem}{Theorem}
\newtheorem*{lemma}{Lemma}
\newtheorem*{proposition}{Proposition}
\newtheorem*{corollary}{Corollary}
\theoremstyle{definition}
\newtheorem*{definition}{Definition}
\newtheorem*{example}{Example}
\theoremstyle{remark}
\newtheorem*{remark}{\textbf{Remark}}
\numberwithin{equation}{section}
\newcommand{\internalcomment}[1]{}
\newcommand{\N}{\mathbf{N}}
\newcommand{\Z}{\mathbf{Z}}
\newcommand{\ko}{\: , \;}
\newcommand{\ul}{\underline}
\newcommand{\we}{\wedge}
\newcommand{\che}{\vee}
\newcommand{\ra}{\rightarrow}
\newcommand{\arr}[1]{\stackrel{#1}{\rightarrow}}
\newcommand{\opname}[1]{\operatorname{\mathsf{#1}}}
\newcommand{\Mod}{\opname{Mod}\nolimits}
\newcommand{\Sum}{{\mbox{Sum}}}
\newcommand{\Tria}{{\mbox{Tria}}}
\newcommand{\id}{\mathbf{1}}
\renewcommand{\L}{\mathbf{L}}
\newcommand{\cok}{\opname{cok}\nolimits}
\newcommand{\im}{\opname{im}\nolimits}
\renewcommand{\ker}{\opname{ker}\nolimits}
\newcommand{\colim}{\colim}
\newcommand{\lid}{\varinjlim}
\newcommand{\Mcolim}{\opname{Mcolim}}
\newcommand{\can}{\opname{can}}
\newcommand{\op}[1]{\opname{#1}\nolimits}
\newcommand{\Zy}[1]{\op{Z}^{#1}}
\newcommand{\Bo}[1]{\op{B}^{#1} \,}
\renewcommand{\H}[1]{{H}^{#1}}
\newcommand{\ca}{{\mathcal A}}
\newcommand{\cb}{{\mathcal B}}
\newcommand{\cc}{{\mathcal C}}
\newcommand{\cd}{{\mathcal D}}
\newcommand{\ce}{{\mathcal E}}
\newcommand{\cF}{{\mathcal F}}
\newcommand{\cg}{{\mathcal G}}
\newcommand{\ch}{{\mathcal H}}
\newcommand{\ci}{{\mathcal I}}
\newcommand{\cm}{{\mathcal M}}
\newcommand{\cp}{{\mathcal P}}
\newcommand{\cR}{{\mathcal R}}
\newcommand{\cq}{{\mathcal Q}}
\newcommand{\cs}{{\mathcal S}}
\newcommand{\cu}{{\mathcal U}}
\newcommand{\cv}{{\mathcal V}}
\newcommand{\cx}{{\mathcal X}}
\newcommand{\cy}{{\mathcal Y}}
\newcommand{\cz}{{\mathcal Z}}
\renewcommand{\phi}{\varphi}
\newcommand{\Hom}{\opname{Hom}}
\newcommand{\Ext}{\opname{Ext}}
\newcommand{\cone}{\opname{Cone}\nolimits}
\newcommand{\Fun}{\opname{Fun}}
\begin{document}
\title{Parametrizing recollement data}

\author{Pedro Nicol\'{a}s}
\address{Departamento de Matem\'{a}ticas, Universidad de Murcia, Aptdo. 4021, 30100, Espinardo, Murcia, Espa\~na}

\email{pedronz@um.es}
\thanks{The authors have been partially supported by research projects from the D.G.I. of the Spanish Ministry of Education and the
Fundaci\'{o}n S\'{e}neca of Murcia, with a part of FEDER funds. The first author has been also supported by the MECD grant AP2003-2896.}

\author{Manuel Saor\'{i}n}
\address{Departamento de Matem\'{a}ticas, Universidad de Murcia, Aptdo. 4021, 30100, Espinardo, Murcia, Espa\~na}
\email{msaorinc@um.es}

\subjclass{18E30, 18E40}
\date{December 18, 2007}


\keywords{Compact object, derived category, dg category, homological epimorphism, perfect object, recollement, smashing, t-structure, torsion pair, triangulated category.}

\begin{abstract}
We give a general parametrization of all the recollement data for a triangulated category with a set of generators. From this we deduce a characterization of when a perfectly generated (or \emph{aisled}) triangulated category is a recollement of triangulated categories generated by a single compact object. Also, we use \emph{homological epimorphisms} to give a complete and explicit description of all the recollement data for (or smashing subcategories of) the derived category of a $k$-flat dg category. In the final part we give a bijection between smashing subcategories of compactly generated triangulated categories and certain ideals of the subcategory of compact objects, in the spirit of H.~Krause's work \cite{Krause2005}. This bijection implies the following weak version of the \emph{generalized smashing conjecture:} in a compactly generated triangulated category every smashing subcategory is generated by a set of Milnor colimits of compact objects.
\end{abstract}

\maketitle

\section{Introduction}

\subsection{Motivations}

The definition of \emph{t-structure} and \emph{recollement} for triangulated categories was given by A. A. Beilinson, J. Bernstein and P. Deligne in their work \cite{BeilinsonBernsteinDeligne} on perverse sheaves. The notion of t-structure is the analogue of the notion of \emph{torsion pair} \cite{Dickson1966, Stenstrom, BeligiannisReiten} for abelian categories. Accordingly, the `triangulated' analogue of a \emph{torsion torsionfree(=TTF) triple} \cite{Jans, Stenstrom, BeligiannisReiten},  still called \emph{TTF triple}, consists of a triple $(\cx,\cy,\cz)$ such that both $(\cx,\cy)$ and $(\cy,\cz)$ are t-structures. In general, TTF triples on a triangulated category $\cd$ are in bijection with (equivalence classes of) ways of expressing $\cd$ as a recollement of triangulated categories, and with smashing subcategories of $\cd$ for instance when it is perfectly generated.

One of the aims of this paper is to give a parametrization of all the TTF triples on triangulated categories of a certain type including those which are well (or even perfectly) generated. This parametrization might result naive but nevertheless, together with B.~Keller's Morita theory for derived categories \cite{Keller1994a}, it yields a generalization of some results of \cite{DwyerGreenlees, Jorgensen2006} and offers an unbounded and abstract version of S. K\"{o}nig's theorem \cite{Konig1991} on recollements of right bounded derived categories of algebras. In a forthcoming paper we will study the problem of descending the parametrization from unbounded to right bounded derived categories, and the corresponding lifting.

The following facts suggest that, in the case of derived categories, a more sophisticated parametrization is possible:
\begin{enumerate}[1)]
\item TTF triples on categories of modules are well understood and a tangible param\-etrization of them was given by J. P. Jans \cite{Jans}.
\item A natural proof of J. P. Jans' theorem uses P. Gabriel's characterization of categories of modules among abelian categories \cite{Gabriel1962}, which is at the basis of Morita theory.
\item P.~Gabriel's characterization admits a `triangulated' analogue which was proved by B. ~Keller, who developed a Morita theory for derived categories of dg categories in \cite{Keller1994a} (and later in \cite{Keller1996, Keller1998b},\dots).
\item It seems that derived categories of dg categories play the r\^ole, in the theory of triangulated categories, that module categories play in the theory of abelian categories.
\end{enumerate}
Then, another aim of this paper is to give a touchable parametrization of TTF triples on derived categories of dg categories by using B.~Keller's theory, and to elucidate their links with H.~Krause's parametrization \cite{Krause2000, Krause2005} of smashing subcategories of compactly generated triangulated categories. For this, we use a generalization of the notion of \emph{homological epimorphism} due to W. Geigle and H. Lenzing \cite{GeigleLenzing}. Homological epimorphisms appear as (stably flat) universal localizations in the work of P. M. Cohn \cite{Cohn}, A. H. Schofield \cite{Schofield}, A. Neeman and A. Ranicki \cite{NeemanRanickiI}, \dots Recently, H.~Krause has studied \cite{Krause2005} the link between homological epimorphisms of algebras and: the \emph{chain map lifting problem}, the \emph{generalized smashing conjecture} and the existence of long exact sequences in algebraic K-theory. Homological epimorphisms also appear in the work of L. Angeleri H\"{u}gel and J. S\'{a}nchez \cite{AngeleriSanchez2007} on the construction of tilting modules induced by ring epimorphisms.

\subsection{Contents}

In section \ref{Notation and preliminary results}, we fix some terminology and recall some results on triangulated categories, emphasizing B.~Keller's work \cite{Keller1994a, Keller1998b} on derived categories of dg categories. We prove, however, some apparently new results which measure the distance between ``compact'' and: `self-compact', ``perfect'', ``superperfect''. In section \ref{Parametrization}, we introduce the notion of \emph{recollement-defining class} (subsection \ref{Recollement-defining classes}) and prove how to find recollement-defining sets in \emph{aisled triangulated categories} (subsection \ref{Recollement-defining sets in aisled categories}) and in perfectly generated triangulated categories (subsection \ref{Recollement-defining sets in perfectly generated triangulated categories}). In subsection \ref{Parametrization of TTF triples on triangulated categories}, recollement-defining sets enable us to parametrize all the TTF triples on a triangulated category with a set of generators, and all the ways of expressing a `good' triangulated category as a recollement of compactly generated triangulated categories. In section \ref{Homological epimorphisms of dg categories}, we introduce the notion of \emph{homological epimorphisms of dg categories}, generalizing the homological epimorphisms of algebras of W. Geigle and H. Lenzing \cite{GeigleLenzing}. We easily prove that this kind of morphisms always induce a TTF triple, which allows us to give several examples of recollements for unbounded derived categories of algebras which were already known for right bounded derived categories (\cf S. K\"{o}nig's paper \cite{Konig1991}). Conversely, we prove that every TTF triple on the derived category of a $k$-flat dg category $\ca$ is induced by a homological epimorphism starting in $\ca$. This correspondence between TTF triples and homological epimorphisms keeps a lot of similitudes with the one accomplished by J. P. Jans \cite{Jans} for module categories. In section \ref{Parametrization for derived categories}, we state a parametrization of smashing sub\-ca\-te\-go\-ries of a compactly generated algebraic triangulated category which uses the main results of subsection \ref{Parametrization of TTF triples on triangulated categories} and section \ref{Homological epimorphisms of dg categories}. Finally, in section \ref{Idempotent two-sided ideals}, we analyse how idempotent two-sided ideals of the subcategory $\cd^c$ of compact objects appear in the description of TTF triples on a compactly generated triangulated category $\cd$. More concretely, in Theorem \ref{From ideals to smashing subcategories} we prove that an idempotent two-sided ideal of $\cd^c$, which is moreover stable under shifts in both directions, always induces a \emph{nicely} described TTF triple on $\cd$. This, together with assertion 2') of H.~Krause's \cite[Theorem 4.2]{Krause2000}, gives a short proof of a result (\cf Theorem \ref{General case}) in the spirit of H.~Krause's bijection \cite[Theorem 11.1, Theorem 12.1, Corollary 12.5 and Corollary 12.6]{Krause2005} between smashing subcategories and special idempotent two-sided ideals. As a consequence (\cf Corollary \ref{General case}), we get the following weak version of the \emph{generalized smashing conjecture}: every smashing subcategory of a compactly generated triangulated category is generated by a set of Milnor colimits of compact objects. Another consequence (\cf Corollary \ref{Algebraic case}) is that, when $\cd$ is algebraic, we recover precisely H.~Krause's bijection.

\subsection{Acknowledgements}

We are grateful to Bernhard Keller for several discussions concerning dg categories.

\section{Notation and preliminary results}\label{Notation and preliminary results}

Unless otherwise stated, $k$ will be a commutative (associative, unital) ring and every additive category will be assumed to be $k$-linear. We denote by $\Mod k$ the category of $k$-modules. Given a class $\cq$ of objects of an additive category $\cd$, we denote by $\cq^{\bot_{\cd}}$ (or $\cq^{\bot}$ if the category $\cd$ is clearly assumed) the full subcategory of $\cd$ formed by the objects $M$ which are \emph{right orthogonal} to every object of $\cq$, \ie such that $\cd(Q,M)=0$ for all $Q$ in $\cq$. Dually for $\ ^{\bot_{\cd}}\cq$. When $\cd$ is a triangulated category, the \emph{shift functor} will be denoted by $?[1]$. When we speak of ``all the shifts'' or ``closed under shifts'' and so on, we will mean ``shifts in both directions'', that is to say, we will refer to the $n$th power $?[n]$ of $?[1]$ for all the integers $n\in\Z$. In case we want to consider another situation (\eg non-negative shifts $?[n]\ko n\geq 0$) this will be said explicitly. We will use without explicit mention the bijection between t-structures on a triangulated category $\cd$ and \emph{aisles} in $\cd$, proved by B.~Keller and D.~Vossieck in \cite{KellerVossieck88}. If $(\cu,\cv[1])$ is a t-structure on a triangulated category $\cd$, we denote by $u:\cu\hookrightarrow\cd$ and $v:\cv\hookrightarrow\cd$ the inclusion functors, by $\tau_{\cu}$ a right adjoint to $u$ and by $\tau^{\cv}$ a left adjoint to $v$.

\subsection{TTF triples and recollements}\label{TTF triples and recollements}

A \emph{TTF triple} on $\cd$ is a triple $(\cx,\cy,\cz)$ of full subcategories of $\cd$ such that $(\cx,\cy)$ and $(\cy,\cz)$ are t-structures on $\cd$. Notice that, in particular, $\cx\ko \cy$ and $\cz$ are full triangulated subcategories of $\cd$. It is well known that TTF triples are in bijection with (equivalence classes of) \emph{recollements} (\cf \cite[1.4.4]{BeilinsonBernsteinDeligne}, \cite[subsection 9.2]{Neeman2001}, \cite[subsection 4.2]{NicolasTesis}). For the convenience of the reader we recall here how this bijection works. If
\[\xymatrix{ \cd_{F}\ar[r]^{i_{*}} & \cd\ar@/_-1pc/[l]^{i^!}\ar@/_1pc/[l]_{i^*}\ar[r]^{j^*} & \cd_{U}\ar@/_-1pc/[l]^{j_{!}}\ar@/_1pc/[l]_{j_{*}}
}
\]
expresses $\cd$ as a recollement of $\cd_{F}$ and $\cd_{U}$, then
\[(j_{!}(\cd_{U}),i_{*}(\cd_{F}),j_{*}(\cd_{U}))
\]
is a TTF triple on $\cd$, where by $j_{!}(\cd_{U})$ we mean the essential image of $j_{!}$, and analogously with the other functors. Conversely, it is straightforward to check that if $(\cx,\cy,\cz)$ is a TTF triple on $\cd$, then $\cd$ is a recollement of $\cy$ and $\cx$ as follows:
\[\xymatrix{ \cy\ar[r]^{y} & \cd\ar@/_-1pc/[l]^{\tau_{\cy}}\ar@/_1pc/[l]_{\tau^\cy}\ar[r]^{\tau_{\cx}} & \cx,\ar@/_-1pc/[l]^{x}\ar@/_1pc/[l]_{z\tau^{\cz}x}
}
\]
Notice that for a TTF triple $(\cx,\cy,\cz)$ the compositions $\cx\arr{x}\cd\arr{\tau^{\cz}}\cz$ and $\cz\arr{z}\cd\arr{\tau_{\cx}}\cx$ are mutually quasi-inverse triangle equivalences (\cf \cite[Lemma 1.6.7]{NicolasTesis}).

\subsection{Generators and infinite d\'{e}vissage}\label{Generators and infinite devissage}

Let $\cd$ be a triangulated category. We say that it is \emph{generated} by a class $\cq$ of objects if an object $M$ of $\cd$ is zero whenever
\[\cd(Q[n],M)=0
\]
for every object $Q$ of $\cq$ and every integer $n\in\Z$. In this case, we say that $\cq$ is a \emph{class of generators} of $\cd$ and that $\cq$ \emph{generates} $\cd$.

If $\cd$ has small coproducts, given a class $\cq$ of objects of $\cd$ we denote by $\Tria_{\cd}(\cq)$ (or $\Tria(\cq)$ if the category $\cd$ is clear) the smallest full triangulated subcategory of $\cd$ containing $\cq$ and closed under small coproducts. We say that $\cd$ satisfies the \emph{principle of infinite d\'{e}vissage} with respect to a class of objects $\cq$ if it has small coproducts and $\cd=\Tria(\cq)$. In this case, it is clear that $\cd$ is generated by $\cq$.

Conversely, the first part of the following lemma states that under certain hypothesis `generators' implies `d\'{e}vissage'.

\begin{lemma}
\begin{enumerate}[1)]
\item Let $\cd$ be a triangulated category with small coproducts and let $\cd'$ be a full triangulated subcategory generated by a class of objects $\cq$.
If $\Tria(\cq)$ is an aisle in $\cd$ contained in $\cd'$, then $\cd'=\Tria(\cq)$.
\item Let $\cd$ be a triangulated category and let $(\cx,\cy)$ be a t-structure on $\cd$ with triangulated aisle.
\begin{enumerate}[2.1)]
\item If $\cq$ is a class of generators of $\cd$, then $\tau^{\cy}(\cq)$ is a class of generators of $\cy$.
\item A class $\cq$ of objects of $\cx$ generates $\cx$ if and only if the objects of $\cy$ are precisely those which are right orthogonal to all the shifts of objects of $\cq$.
\end{enumerate}
\end{enumerate}
\end{lemma}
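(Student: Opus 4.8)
The plan is to run all three statements through truncation triangles. The one preliminary point I would isolate first is that a \emph{triangulated} aisle $\cu$ of $\cd$ is automatically closed under all shifts $?[n]$, $n\in\Z$ (not just $?[1]$), whence its right orthogonal $\cu^{\bot}$ is too and coincides with the coaisle; consequently the truncation functors commute with every shift. This is what will allow the orthogonality conditions below to be tested against $Q[n]$ for all $n\in\Z$, as the definition of ``generates'' demands.

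For 1), only the inclusion $\cd'\subseteq\Tria(\cq)$ needs an argument, the other being a hypothesis. Given $M\in\cd'$ I would take its truncation triangle $\tau M\to M\to M'\to(\tau M)[1]$ for the aisle $\Tria(\cq)$, note that $M'$ is right orthogonal to $\Tria(\cq)$ and hence $\cd(Q[n],M')=0$ for all $Q\in\cq$ and $n\in\Z$ (since $Q[n]\in\Tria(\cq)$), observe that $M'\in\cd'$ because $\tau M\in\Tria(\cq)\subseteq\cd'$ and $\cd'$ is triangulated, and then conclude $M'=0$ since $\cq$ generates $\cd'$; so $M\cong\tau M\in\Tria(\cq)$.

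For 2.1), using that $\tau^{\cy}$ is a left adjoint of the inclusion $\cy\hookrightarrow\cd$ and that $\cy$ is closed under all shifts, a short adjunction-and-shift computation gives $\cy(\tau^{\cy}(Q)[n],N)\cong\cd(Q[n],N)$ for $N\in\cy$; so if the left-hand side vanishes for all $Q\in\cq$ and $n\in\Z$, then $N$ is right orthogonal to the generating class $\cq$ of $\cd$, forcing $N=0$. For 2.2), the inclusion ``$\cy\subseteq\{\text{objects right orthogonal to all shifts of }\cq\}$'' always holds, since $\cq\subseteq\cx$ and $\cx$ is closed under shifts. For the forward direction, given $M$ right orthogonal to all shifts of $\cq$, I would apply $\cd(Q[n],-)$ to the truncation triangle $\tau_{\cx}M\to M\to\tau^{\cy}M\to(\tau_{\cx}M)[1]$: since $\tau^{\cy}M\in\cy$ and all its shifts are right orthogonal to $\cq$, the two neighbouring terms in the long exact sequence vanish and $\cd(Q[n],\tau_{\cx}M)\cong\cd(Q[n],M)=0$; then $\tau_{\cx}M=0$ because $\cq$ generates $\cx$, so $M\cong\tau^{\cy}M\in\cy$. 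For the converse, an object $M\in\cx$ right orthogonal to all shifts of $\cq$ lies in $\cy$ by hypothesis, hence in $\cx\cap\cy$; since $\cd(X,Y)=0$ for $X\in\cx$, $Y\in\cy$, this gives $\id_{M}=0$, so $M=0$, proving that $\cq$ generates $\cx$.

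There is no genuine obstacle here; the only thing demanding care is the preliminary remark about triangulated aisles — that ``closed under $?[1]$'' is upgraded to ``closed under $?[n]$ for all $n\in\Z$'' on both the aisle and the coaisle, and that the truncations then commute with every shift. Without it the orthogonality bookkeeping, which must range over all integer shifts, would not close up.
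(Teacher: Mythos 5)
Your argument for part 1) is essentially identical to the paper's: truncate $M\in\cd'$ against the aisle $\Tria(\cq)$, observe that the cone lies in $\cd'$ and is right orthogonal to all shifts of $\cq$, and conclude it vanishes. The paper leaves part 2) as an exercise, and your truncation-plus-adjunction treatment is the standard argument one would expect and is correct, including the necessary preliminary observation that a triangulated aisle (and hence its coaisle and truncation functors) is stable under all integer shifts.
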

\begin{proof}
1) Given an object $M$ of $\cd'$ there exists a triangle
\[M'\ra M\ra M''\ra M'[1]
\]
with $M'$ in $\Tria(\cq)$ and $M''$ in $\Tria(\cq)^{\bot_{\cd}}$. Since $M'$ and $M$ are in $\cd'$, then so is $M''$. But $\cd'$ is generated by $\cq$, which implies that $M''=0$ and so $M$ belongs to $\Tria(\cq)$.

2) Left as an exercise.
\end{proof}

\subsection{(Super)perfectness and compactness}\label{(Super)perfectness and compactness}

An object $P$ of $\cd$ is \emph{perfect} (respectively, \emph{superperfect}) if for every countable (respectively, small) family of morphisms $M_{i}\ra N_{i}\ko i\in I$, of $\cd$ such that the natural morphism $\coprod_{I}M_{i}\ra\coprod_{I}N_{i}$ exists the induced map
\[\cd(P,\coprod_{I}M_{i})\ra\cd(P,\coprod_{I}N_{i})
\]
is surjective provided every map
\[\cd(P,M_{i})\ra\cd(P,N_{i})
\]
is surjective. Particular cases of superperfect objects are \emph{compact} objects, \ie objects $P$ such that $\cd(P,?)$ commutes with small coproducts.

The following lemma is very useful. It shows some links between t-structures and (super)perfect and compact objects. First we remind the following definitions. Let $\cd$ be a triangulated category. A contravariant functor $H:\cd\ra\Mod k$ is \emph{cohomological} if for every triangle
\[L\arr{f}M\arr{g}N\ra L[1]
\]
the sequence
\[H(N)\arr{H(g)}H(M)\arr{H(f)}H(L).
\]
is exact. We say that $\cd$ \emph{satisfies the Brown's representability theorem for cohomology} if every cohomological functor $H:\cd\ra\Mod k$ taking small coproducts to small products is representable.

\begin{lemma}
\begin{enumerate}[1)]
\item Let $\cd$ be a triangulated category and let $(\cx,\cy)$ be a t-structure on $\cd$ with $\cx$ triangulated and such that the inclusion functor $y:\cy\hookrightarrow\cd$ preserves small coproducts. If $M$ is a perfect (respectively, superperfect, compact) object of $\cd$, then $\tau^{\cy}M$ is a perfect (respectively, superperfect, compact) object of $\cy$.
\item The adjoint functor argument: Let $\cd$ be a triangulated category with small coproducts and let $\cd'$ be a full triangulated subcategory of $\cd$ closed under small coproducts and satisfying the Brown's representability theorem. In this case $\cd'$ is an aisle in $\cd$. In particular, if $\cp$ is a set of objects of $\cd$ which are perfect in $\Tria(\cp)$, then $\Tria(\cp)$ is an aisle in $\cd$.
\end{enumerate}
\end{lemma}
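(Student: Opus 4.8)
The plan is to establish the two assertions separately: the first by a direct adjunction computation, the second by the classical adjoint functor argument via Brown representability. For the first assertion, recall that $\tau^{\cy}$ is left adjoint to the inclusion $y\colon\cy\hookrightarrow\cd$, so that there is a natural isomorphism $\cy(\tau^{\cy}M,N)\cong\cd(M,N)$ for every $N$ in $\cy$. Given a countable (respectively, small) family of morphisms $N_i\ra N_i'$ of $\cy$ whose coproduct exists in $\cy$, the assumption that $y$ preserves small coproducts means that this coproduct, and the induced morphism $\coprod_I N_i\ra\coprod_I N_i'$, are also computed in $\cd$. Naturality of the adjunction identifies each map $\cy(\tau^{\cy}M,N_i)\ra\cy(\tau^{\cy}M,N_i')$ with $\cd(M,N_i)\ra\cd(M,N_i')$, and likewise after taking coproducts; hence if all the former are surjective and $M$ is perfect (respectively, superperfect) in $\cd$, then $\cd(M,\coprod_I N_i)\ra\cd(M,\coprod_I N_i')$ is surjective, and transporting back through the adjunction shows that $\tau^{\cy}M$ is perfect (respectively, superperfect) in $\cy$. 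The compact case is the same chain of canonical isomorphisms $\cy(\tau^{\cy}M,\coprod_I N_i)\cong\cd(M,\coprod_I N_i)\cong\coprod_I\cd(M,N_i)\cong\coprod_I\cy(\tau^{\cy}M,N_i)$, the middle one expressing compactness of $M$ in $\cd$. (The hypothesis that $\cx$ be triangulated is not needed here: only the adjunction $(\tau^{\cy},y)$ and the fact that $y$ preserves coproducts enter.)

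For the second assertion, I would invoke the Keller--Vossieck characterization of aisles \cite{KellerVossieck88}. Since $\cd'$ is a triangulated subcategory it is automatically closed under all shifts and under extensions, so it suffices to produce a right adjoint to the inclusion $\incl\colon\cd'\hookrightarrow\cd$. Fix an object $M$ of $\cd$ and restrict the cohomological functor $\cd(-,M)$ to $\cd'$, obtaining a contravariant functor $\cd'\ra\Mod k$. It is still cohomological because $\cd'$ is a triangulated subcategory, and it takes small coproducts to small products because $\cd'$ is closed under the small coproducts of $\cd$, so that coproducts in $\cd'$ coincide with those in $\cd$. By Brown representability for $\cd'$ this functor is representable: there are an object $\tau M$ of $\cd'$ and a morphism $\tau M\ra M$ inducing an isomorphism $\cd'(N,\tau M)\cong\cd(N,M)$ natural in $N\in\cd'$, which exhibits $\tau$ as a right adjoint to the inclusion. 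Finally I would check that $(\cd',(\cd')^{\bot})$ is a t-structure: completing the counit $\tau M\ra M$ to a triangle $\tau M\ra M\ra M''\ra(\tau M)[1]$ and applying $\cd(N,-)$ with $N$ in $\cd'$, the two outer maps become isomorphisms by adjunction, forcing $\cd(N,M'')=0$, so $M''\in(\cd')^{\bot}$; together with $\cd'[1]\subseteq\cd'$ this is precisely the aisle condition.

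For the concluding ``in particular'' clause, take $\cd'=\Tria(\cp)$. By construction this is a triangulated subcategory of $\cd$ closed under small coproducts, and by the principle of infinite d\'evissage it is generated by $\cp$; since each object of $\cp$ is perfect in $\Tria(\cp)$, the category $\Tria(\cp)$ is perfectly generated and therefore satisfies Brown's representability theorem for cohomology, so the previous paragraph applies and $\Tria(\cp)$ is an aisle in $\cd$. The only substantial external ingredient in the whole proof is this last appeal to Brown representability for perfectly generated triangulated categories; the one computation needing care is the assertion that $\cd(-,M)|_{\cd'}$ converts coproducts into products, which reduces to the fact that $\cd'$ is, by hypothesis, closed under the small coproducts of $\cd$, so that no new coproducts must be created inside $\cd'$.
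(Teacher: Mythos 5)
Your proposal is correct and follows essentially the same route as the paper: for part 1 the paper also reduces the (super)perfect and compact cases to the adjunction $(\tau^{\cy},y)$ together with the fact that $y$ preserves coproducts; for part 2 it likewise restricts $\cd(-,M)$ to $\cd'$, invokes Brown representability to get a representing object, deduces a right adjoint to the inclusion via Yoneda, and concludes via Keller--Vossieck that $\cd'$ is an aisle, with the ``in particular'' clause handled by Krause's theorem that a perfectly generated triangulated category satisfies Brown representability. Your aside that the triangulated hypothesis on $\cx$ is not used in part 1 is a fair observation and does not affect the argument.
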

\begin{proof}
1) is well known \cite[Lemma 2.4]{Neeman1992} for the case of compact objects. Using the adjunction $(y,\tau^{\cy})$ it also follows easily for the case of (super)perfect objects.

2) If
\[\iota: \cd'\hookrightarrow\cd
\]
is the inclusion functor, for an object $M$ of $\cd$ we define the functor
\[H(?):=\cd(\iota(?),M):\cd'\ra\Mod k,
\]
which takes triangles to exact sequences and coproduct to products. Then, by hypothesis this functor is represented by an object, say $\tau(M)\in\cd'$. By Yoneda lemma it turns out that the map $M\mapsto\tau(M)$ underlies a functor $\cd\ra\cd'$ which is right adjoint to $\iota$. Therefore, by \cite[subsection 1.1]{KellerVossieck88} we have that $\cd'$ is an aisle. Finally, if $\cp$ is a set of objects of $\cd$ which are perfect in $\Tria(\cp)$, then by H.~Krause's theorem \cite[Theorem A]{Krause2002} we know that $\Tria(\cp)$ satisfies the Brown's representability theorem.
\end{proof}

\begin{remark}
By using Lemma \ref{Generators and infinite devissage} and the lemma above we have the following: if a triangulated category $\cd$ with small coproducts is generated by a set $\cp$ of objects such that $\Tria(\cp)$ is an aisle in $\cd$ (\eg if the objects of $\cp$ are perfect in $\Tria(\cp)$), then it satisfies the principle of infinite d\'{e}vissage with respect to that set, \ie $\cd=\Tria(\cp)$.
\end{remark}

A triangulated category with small coproducts is \emph{perfectly} (respectively, \emph{superperfectly}, \emph{compactly}) \emph{generated} if it is generated by a set of perfect (respectively, superperfect, compact) objects. A TTF triple $(\cx,\cy,\cz)$ on a triangulated category with small coproducts is \emph{perfectly} (respectively, \emph{superperfectly}, \emph{compactly}) \emph{generated} if so is $\cx$ as a triangulated category.

\subsection{Smashing subcategories}\label{Smashing subcategories}

Let $\cd$ be a triangulated category with small coproducts. A subcategory $\cx$ of $\cd$ is \emph{smashing} if it is a full triangulated subcategory of $\cd$ which, moreover, is an aisle in $\cd$ whose associated coaisle $\cx^{\bot}$ is closed under small coproducts. It is proved in \cite{NicolasTesis} that this agrees with probably more standard definitions of ``smashing subcategory''.

Smashing subcategories allow to transfer local phenomena to global phenomena:
\begin{lemma}
If $M$ is a perfect (respectively, superperfect, compact) object of a smashing subcategory $\cx$ of a triangulated category $\cd$ with small coproducts, then $M$ is perfect (respectively, superperfect, compact) in $\cd$.
\end{lemma}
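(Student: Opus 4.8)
The plan is to combine the transfer lemma for orthogonals with the fact that a smashing subcategory is an aisle whose coaisle is closed under coproducts, so that truncation commutes with coproducts in both directions. Write $\cx$ for the smashing subcategory, with inclusion $x:\cx\hookrightarrow\cd$ and right adjoint $\tau_{\cx}$, and let $\cy:=\cx^{\bot}$ be the associated coaisle, which by hypothesis is closed under small coproducts. The key structural point is this: for any small family $(N_i)_{i\in I}$ of objects of $\cd$, the truncation triangle $\tau_{\cx}N_i \ra N_i \ra \tau^{\cy}N_i \ra (\tau_{\cx}N_i)[1]$ splices together, and since $\cx$ is closed under coproducts (it is an aisle, hence triangulated with small coproducts, and the inclusion into $\cd$ preserves coproducts because $\cx$ is a coreflective subcategory whose reflector $\tau_{\cx}$ has a right... more simply: $\cx$ being an aisle is closed under coproducts) and $\cy=\cx^{\bot}$ is closed under coproducts by the smashing hypothesis, the coproduct of these triangles realizes $\coprod \tau_{\cx}N_i \ra \coprod N_i \ra \coprod \tau^{\cy}N_i$ as the truncation triangle of $\coprod N_i$. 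Hence $\tau_{\cx}(\coprod N_i)\cong\coprod\tau_{\cx}N_i$, i.e. $\tau_{\cx}$ preserves small coproducts. This is the heart of the argument, and the step I expect to be the main obstacle, since one must be careful that the maps in the coproduct triangle are genuinely the canonical truncation maps; this follows from uniqueness of truncation triangles together with the fact that each component is one.

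Granting that, I would argue as follows. Fix $M\in\cx$, and let $M_i\ra N_i$, $i\in I$, be a small (resp. countable) family of morphisms in $\cd$ for which $\coprod_I M_i\ra\coprod_I N_i$ exists, and suppose each $\cd(M,M_i)\ra\cd(M,N_i)$ is surjective. We must show $\cd(M,\coprod M_i)\ra\cd(M,\coprod N_i)$ is surjective. Now, since $M\in\cx$ and $\tau_{\cx}$ is right adjoint to $x$, the adjunction gives a natural isomorphism $\cd(M,L)\cong\cx(M,\tau_{\cx}L)$ for every $L\in\cd$. Therefore the map $\cd(M,M_i)\ra\cd(M,N_i)$ is identified with $\cx(M,\tau_{\cx}M_i)\ra\cx(M,\tau_{\cx}N_i)$, which is thus surjective for every $i$. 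Because $\tau_{\cx}$ commutes with the relevant coproducts (by the structural point above), we have $\coprod_I \tau_{\cx}M_i\cong\tau_{\cx}(\coprod_I M_i)$ and likewise for the $N_i$, and the induced morphism $\coprod\tau_{\cx}M_i\ra\coprod\tau_{\cx}N_i$ is identified with $\tau_{\cx}$ applied to $\coprod M_i\ra\coprod N_i$, in particular it exists in $\cx$.

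Now invoke the hypothesis on $M$: $M$ is perfect (resp. superperfect, compact) as an object of $\cx$. Applied to the family $\tau_{\cx}M_i\ra\tau_{\cx}N_i$ in $\cx$, perfectness yields that $\cx(M,\coprod\tau_{\cx}M_i)\ra\cx(M,\coprod\tau_{\cx}N_i)$ is surjective (for compact $M$ one does not even need this, since $\cx(M,?)$ commutes with coproducts outright; for superperfect/perfect one uses the definition directly). Re-translating through the adjunction isomorphism and the identifications $\coprod\tau_{\cx}M_i\cong\tau_{\cx}(\coprod M_i)$, $\coprod\tau_{\cx}N_i\cong\tau_{\cx}(\coprod N_i)$, this surjection is exactly $\cd(M,\coprod M_i)\ra\cd(M,\coprod N_i)$. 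Hence $M$ is perfect (resp. superperfect, compact) in $\cd$. The compact case can alternatively be deduced directly: $\cd(M,\coprod N_i)\cong\cx(M,\tau_{\cx}\coprod N_i)\cong\cx(M,\coprod\tau_{\cx}N_i)\cong\coprod\cx(M,\tau_{\cx}N_i)\cong\coprod\cd(M,N_i)$, using compactness of $M$ in $\cx$ in the third isomorphism.
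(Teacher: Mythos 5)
Your argument is correct and is exactly the argument the paper intends: its one-line proof is precisely the structural fact you isolate (a small coproduct of truncation triangles for $(\cx,\cx^{\bot})$ is again a truncation triangle, since $\cx$ and the coaisle $\cx^{\bot}$ are each closed under small coproducts), and your subsequent use of the adjunction $\cd(M,L)\cong\cx(M,\tau_{\cx}L)$ together with the resulting isomorphism $\tau_{\cx}(\coprod N_i)\cong\coprod\tau_{\cx}N_i$ is the natural way to finish. No gap; same route as the paper, just spelled out.
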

\begin{proof}
Use that a small coproduct of triangles associated to the t-structure $(\cx,\cx^{\bot})$ is again a triangle associated to this t-structure.
\end{proof}

Now we fix the notation for a particular kind of construction which will be crucial at certain steps. Let $\cd$ be a triangulated category and let
\[M_{0}\arr{f_{0}}M_{1}\arr{f_{1}}M_{2}\ra\dots
\]
be a sequence of morphisms of $\cd$ such that the coproduct $\coprod_{n\geq 0}M_{n}$ exists in $\cd$. The \emph{Milnor colimit} of this sequence, denoted by $\Mcolim M_{n} $, is given, up to non-unique isomorphism, by the triangle
\[\coprod_{n\geq 0}M_{n}\arr{\id-\sigma}\coprod_{n\geq 0}M_{n}\arr{\pi} \Mcolim M_{n}\ra\coprod_{n\geq 0}M_{n}[1],
\]
where the morphism $\sigma$ has components
\[M_{n}\arr{f_{n}} M_{n+1}\arr{\can}\coprod_{m\geq 0}M_{m}.
\]
The above triangle is said to be the \emph{Milnor triangle} (\cf \cite{Keller1998b, Milnor1962}) associated to the sequence $f_{n}\ko n\geq 0$. The notion of Milnor colimit has appeared in the literature under the name of \emph{homotopy colimit} (\cf \cite[Definition 2.1]{NeemanBokstedt1993}, \cite[Definition 1.6.4]{Neeman2001}) and \emph{homotopy limit} (\cf \cite[subsection 5.1]{Keller1994a}). However, we think it is better to keep this terminology for the notions appearing in the theory of derivators (\cf \cite{Maltsiniotis2001, Maltsiniotis2005, CisinskiNeeman2005}).

\begin{proposition}
Let $\cd$ be a triangulated category with small coproducts and let $P$ be an object of $\cd$. The following conditions are equivalent:
\begin{enumerate}[1)]
\item $P$ is compact in $\cd$.
\item $P$ satisfies:
\begin{enumerate}[2.1)]
\item $P$ is perfect in $\cd$.
\item $P$ is compact in the full subcategory $\mbox{Sum}(\{P[n]\}_{n\in\Z})$ of $\cd$ formed by small coproducts of shifts of $P$.
\item $\Tria(P)^{\bot}$ is closed under small coproducts.
\end{enumerate}
\item $P$ satisfies:
\begin{enumerate}[3.1)]
\item $P$ is compact in $\Tria(P)$.
\item $\Tria(P)^{\bot}$ is closed under small coproducts.
\end{enumerate}
\item $P$ satisfies:
\begin{enumerate}[4.1)]
\item $P$ is superperfect in $\cd$.
\item $P$ is compact in $\Sum(\{P[n]\}_{n\in\Z})$.
\end{enumerate}
\end{enumerate}
\end{proposition}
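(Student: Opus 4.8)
The plan is to prove the cycle of implications $1)\Rightarrow 2)\Rightarrow 3)\Rightarrow 1)$ together with $1)\Leftrightarrow 4)$, since several of the individual pieces are either trivial or already available in the excerpt. First I would note that $1)\Rightarrow 2.1)$ and $1)\Rightarrow 2.2)$ are immediate: a compact object is superperfect (hence perfect) by the remark made right after the definition of superperfect, and compactness in $\cd$ trivially restricts to compactness in any full subcategory closed under the relevant coproducts, in particular $\Sum(\{P[n]\}_{n\in\Z})$. For $1)\Rightarrow 2.3)$, if $P$ is compact in $\cd$ then so is every object of $\Tria(P)$ — indeed the class of compact objects is a triangulated subcategory closed under retracts — and for a set of compact objects $\cq$ the coaisle $\cq^{\bot}$ (equivalently $\Tria(\cq)^{\bot}$) is automatically closed under small coproducts because $\cd(Q[n],\coprod_i M_i)\cong\coprod_i\cd(Q[n],M_i)=0$. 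This already gives $1)\Rightarrow 2)$, and $2)\Rightarrow 3)$ only requires deducing $3.1)$ from $2.1)+2.2)$.

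The implication $2)\Rightarrow 3)$ is where the first real work lies: I must show that if $P$ is perfect in $\cd$ and compact in $\Sum(\{P[n]\}_{n\in\Z})$, then $P$ is compact in $\Tria(P)$. The strategy is the standard one for this kind of statement. By the adjoint functor argument (part 2) of the third Lemma), a set of perfect objects generates an aisle, so $\Tria(P)$ is an aisle and, by the Remark following that Lemma, $\Tria(P)$ satisfies infinite dévissage with respect to $P$; thus every object of $\Tria(P)$ is built from $P$ by shifts, cones and small coproducts. One then shows that the class of objects $M$ of $\Tria(P)$ for which $\cd(P,?)$ commutes with the relevant coproducts is closed under these operations. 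Closure under shifts and cones (the five-lemma) is routine; the delicate point is closure under small coproducts, and this is exactly where perfectness is used: a small coproduct $\coprod_i M_i$ of objects of $\Tria(P)$ can be realized as a Milnor colimit of the partial finite coproducts (or, more precisely, one filters each $M_i$ and takes colimits), and perfectness of $P$ guarantees surjectivity of the map out of the Milnor triangle, while compactness in $\Sum(\{P[n]\}_{n\in\Z})$ handles the coproducts of shifts of $P$ appearing at the bottom of the dévissage. I expect this coproduct step — carefully interleaving the dévissage filtration with the Milnor-colimit presentation of an arbitrary small coproduct and invoking perfectness to kill the connecting $\lim^1$-type obstruction — to be the main obstacle.

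Next, $3)\Rightarrow 1)$: assume $P$ is compact in $\Tria(P)$ and that $\Tria(P)^{\bot}$ is closed under small coproducts; I want $P$ compact in $\cd$. By the Remark, $\cd'=\Tria(P)$ is an aisle, and the hypothesis says the associated coaisle $\cd'^{\bot}$ is closed under coproducts, i.e. $\cd'=\Tria(P)$ is a smashing subcategory of $\cd$ in the sense of subsection \ref{Smashing subcategories}. Now apply the Lemma in that subsection (``Smashing subcategories allow to transfer local phenomena to global phenomena''): a compact object of a smashing subcategory $\cx$ of $\cd$ is compact in $\cd$. Taking $\cx=\Tria(P)$ and using $3.1)$ gives that $P$ is compact in $\cd$, closing the cycle $1)\Rightarrow 2)\Rightarrow 3)\Rightarrow 1)$.

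Finally, the equivalence with $4)$. The implication $1)\Rightarrow 4)$ is immediate: compact implies superperfect (remark after the definition), giving $4.1)$, and compact in $\cd$ restricts to compact in $\Sum(\{P[n]\}_{n\in\Z})$, giving $4.2)$. For $4)\Rightarrow 2)$ (or directly $4)\Rightarrow 3)$), note that $4.1)$ gives $2.1)$ since superperfect implies perfect, and $4.2)$ is literally $2.2)$; so it remains only to see that $4)$ forces $2.3)$, i.e. $\Tria(P)^{\bot}$ closed under small coproducts. This should follow by repeating the coproduct-closure argument of $2)\Rightarrow 3)$ but now exploiting the \emph{full} (small, not merely countable) version of perfectness available in $4.1)$: superperfectness lets one handle arbitrary small coproducts of morphisms, so one shows directly that for $N_i\in\Tria(P)^{\bot}$ the coproduct $\coprod_i N_i$ is again right orthogonal to all shifts of $P$ — again building $\Tria(P)$ from $P$ by dévissage and checking the orthogonality class is stable under the three operations, with the coproduct step powered by superperfectness. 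Once $2)$ holds we are back in the proven cycle and conclude $1)$. Thus the only genuinely new estimate in the whole proposition is the coproduct-closure/dévissage argument, which I would isolate as a sublemma and use twice.
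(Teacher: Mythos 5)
Your overall strategy matches the paper's: perfectness gives that $\Tria(P)$ is an aisle, the hard step is a Milnor-colimit/d\'evissage computation showing $P$ is compact in $\Tria(P)$, and the smashing-subcategory lemma transfers this back to $\cd$ (which is exactly how the paper handles $2)\Rightarrow 1)$ and $3)\Rightarrow 1)$). Two points deserve comment. In $1)\Rightarrow 2.3)$ you assert that every object of $\Tria(P)$ is compact in $\cd$; this is false, since $\Tria(P)$ is by definition closed under small coproducts and an infinite coproduct of nonzero compacts is not compact. What you in fact need, and what your displayed formula actually uses, is only that $P$ itself is compact together with the identity $\Tria(P)^{\bot}=\{P[n]\}_{n\in\Z}^{\bot}$ (the left orthogonal of a fixed class is a localizing subcategory, so it swallows $\Tria(P)$ once it contains the shifts of $P$). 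For $4)\Rightarrow 1)$ the paper also notes that $4)\Rightarrow 2)\Rightarrow 1)$ works, but records a considerably shorter direct argument due to Keller that avoids d\'evissage altogether: for a family $(M_i)_{i\in I}$ choose $Q_i\in\Sum(\{P[n]\}_{n\in\Z})$ with $\cd(P,Q_i)\to\cd(P,M_i)$ surjective, deduce from superperfectness that $\cd(P,\coprod Q_i)\to\cd(P,\coprod M_i)$ is surjective, and then chase the commutative square in which the left vertical map $\coprod\cd(P,Q_i)\to\cd(P,\coprod Q_i)$ is an isomorphism by $4.2)$. Your detour through $4)\Rightarrow 2.3)$ is logically sound but re-enters the heavy machinery, and the ``d\'evissage'' framing of that sub-step is a red herring: closure of $\{P[n]\}_{n\in\Z}^{\bot}$ under small coproducts already follows from one application of superperfectness to the family of morphisms $0\to N_i$ with all $N_i$ in $\Tria(P)^{\bot}$.
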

\begin{proof}
$2)\Rightarrow 1)$ If $P$ is perfect in $\cd$, by Theorem A of \cite{Krause2002} and the adjoint functor argument (\cf Lemma \ref{(Super)perfectness and compactness}) we know that $\cx=\Tria(P)$ is an aisle in $\cd$. Assumption 2.3) says that $\cx$ is a smashing subcategory. Thanks to Lemma \ref{Smashing subcategories}, it suffices to prove that $P$ is compact in $\cx$. For this, we will use the following facts:
\begin{enumerate}[a)]
\item Every object of $\cx$ is the Milnor colimit $\Mcolim X_{n}$ of a sequence
\[X_{0}\arr{f_{0}}X_{1}\arr{f_{1}}X_{2}\arr{f_{2}}\dots
\]
where $X_{0}$ as well as each mapping cone $\cone(f_{m})$ is in $\mbox{Sum}(\{P[n]\}_{n\in\Z})$.
\item Thanks to the proof of Theorem A of \cite{Krause2002} (\cf also the proof of \cite[Theorem 2.2]{Souto2004}), if $\{X_{n}, f_{n}\}_{n\geq 0}$ is a direct system as in a) we know that the natural morphism
\[\lid\cd(P,X_{n})\ra\cd(P,\Mcolim X_{n})
\]
is an isomorphism.
\item Hypothesis 2.2) implies that, for any fixed natural number $m\geq 0$, the functor $\cd(P,?)$ preserves small coproducts of $m$-fold extensions of objects of the class $\mbox{Sum}(\{P[n]\}_{n\in\Z})$.
\end{enumerate}
Let $\Mcolim X^{i}_{n}\ko i\in I$, be an arbitrary family of objects of $\cx$. Then the natural morphism
\[\coprod_{i\in I}\cx(P,\Mcolim X^{i}_{n})\ra\cx(P,\coprod_{i\in I}\Mcolim X^{i}_{n})
\]
is the composition of the following natural isomorphisms
\begin{align}
\coprod_{i\in I}\cx(P,\Mcolim X^{i}_{n})\cong\coprod_{i\in I}\lid_{n\geq 0}\cx(P,X^{i}_{n})\cong\lid_{n\geq 0}\coprod_{i\in I}\cx(P,X^{i}_{n})\cong \nonumber \\
\cong\lid_{n\geq 0}\cx(P,\coprod_{i\in I}X^{i}_{n})\cong \cx(P,\Mcolim \coprod_{i\in I}X^{i}_{n})\cong\cx(P,\coprod_{i\in I}\Mcolim X^{i}_{n}) \nonumber
\end{align}
Hence, $P$ is compact in the smashing subcategory $\cx$.

$3)\Rightarrow 1)$ By the adjoint functor argument we know that $\Tria(P)$ is an aisle in $\cd$, and condition 3.2) ensures that, moreover, it is a smashing subcategory. Hence the lemma above finishes the proof.

$4)\Rightarrow 1)$ Of course, 4) implies 2), and 2) implies 1). However, there exists a shorter proof pointed out by B.~Keller. Let $M_{i}\ko i\in I$, be a family of objects of $\cd$ and take, for each index $i\in I$, an object $Q_{i}\in\Sum(\{P[n]\}_{n\in\Z})$ together with a morphism $Q_{i}\ra M_{i}$ such that the induced morphism
\[\cd(P,Q_{i})\ra\cd(P,M_{i})
\]
is surjective. Since $P$ is superperfect, this implies that the morphism
\[\cd(P,\coprod_{I}Q_{i})\ra\cd(P,\coprod_{I}M_{i}).
\]
is surjective. Now consider the commutative square
\[\xymatrix{\cd(P,\coprod_{I}Q_{i})\ar[r] &\cd(P,\coprod_{I}M_{i}) \\
\coprod_{I}\cd(P,Q_{i})\ar[u]_{\can}^{\wr}\ar[r] &\coprod_{I}\cd(P,M_{i})\ar[u]_{\can}
}
\]
The first vertical arrow is an isomorphism by assumption 4.2, and the horizontal arrows are surjections. Then, the second vertical arrow is surjective. But, of course, it is also injective, and so bijective.
\end{proof}

The following result is a consequence of Lemma \ref{Generators and infinite devissage} and Lemma \ref{(Super)perfectness and compactness}.

\begin{corollary}
If $\cd$ is a perfectly generated triangulated category, then smashing subcategories of $\cd$ are in bijection with TTF triples on $\cd$ via the map
\[\cx\mapsto(\cx,\cx^{\bot},(\cx^{\bot})^{\bot}).
\]
\end{corollary}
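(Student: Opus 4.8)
The plan is to show that the assignment $\cx\mapsto(\cx,\cx^{\bot},(\cx^{\bot})^{\bot})$ takes values among TTF triples and is a bijection. Injectivity is immediate, since $\cx$ is the first term of the triple, and surjectivity is easy; the substantive point is that the triple is indeed a TTF triple.

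\emph{Surjectivity.} Let $(\cx,\cy,\cz)$ be any TTF triple on $\cd$. Since $(\cx,\cy)$ is a t-structure we have $\cy=\cx^{\bot}$, and since $(\cy,\cz)$ is a t-structure we have $\cz=\cy^{\bot}=(\cx^{\bot})^{\bot}$; also $\cx$ is a full triangulated subcategory and an aisle in $\cd$. As $\cd$ is perfectly generated it has small coproducts, and the coaisle $\cx^{\bot}=\cy$, being the aisle of the t-structure $(\cy,\cz)$, is closed under them. Hence $\cx$ is smashing and is sent to $(\cx,\cy,\cz)$.

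\emph{The triple is a TTF triple.} Fix a smashing subcategory $\cx$. Because $\cx$ is an aisle, $(\cx,\cx^{\bot})$ is a t-structure; and because $\cx$ is triangulated, so is $\cx^{\bot}$ (if $M\in\cx^{\bot}$ then $\cd(X,M[n])=\cd(X[-n],M)=0$ for all $X\in\cx$ and all $n\in\Z$, since $X[-n]\in\cx$). It therefore suffices to show that $\cx^{\bot}$ is an aisle in $\cd$, for then $(\cx^{\bot},(\cx^{\bot})^{\bot})$ is a t-structure. The idea is to prove that $\cx^{\bot}$ is perfectly generated and then conclude by Brown representability together with the adjoint functor argument. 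Let $\cp$ be a set of perfect generators of $\cd$. Since $\cx$ is smashing, $\cx^{\bot}$ is closed under the small coproducts of $\cd$; in particular $\cx^{\bot}$ has small coproducts and the inclusion $\cx^{\bot}\hookrightarrow\cd$ preserves them. By part 2.1) of Lemma \ref{Generators and infinite devissage}, $\tau^{\cx^{\bot}}(\cp)$ generates $\cx^{\bot}$, and by part 1) of Lemma \ref{(Super)perfectness and compactness} each $\tau^{\cx^{\bot}}(P)$, $P\in\cp$, is perfect in $\cx^{\bot}$, hence perfect in $\Tria_{\cx^{\bot}}(\tau^{\cx^{\bot}}(\cp))$. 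The Remark following Lemma \ref{(Super)perfectness and compactness} then yields $\cx^{\bot}=\Tria_{\cx^{\bot}}(\tau^{\cx^{\bot}}(\cp))$, and H.~Krause's Theorem A of \cite{Krause2002} shows that $\cx^{\bot}$ satisfies Brown's representability theorem. Being in addition a full triangulated subcategory of $\cd$ closed under small coproducts, $\cx^{\bot}$ is an aisle in $\cd$ by part 2) of Lemma \ref{(Super)perfectness and compactness}. This is exactly what was needed.

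\emph{Main obstacle.} The argument is not deep once the preliminary lemmas are in hand; the one thing to watch is that perfectness really does descend from $\cd$ to $\cx^{\bot}$. This relies on the smashing hypothesis, which makes $\cx^{\bot}$ closed under coproducts, so that these coproducts coincide with those of $\cd$, the inclusion preserves them, and part 1) of Lemma \ref{(Super)perfectness and compactness} applies; after that, promoting $\cx^{\bot}$ to an aisle is just a matter of combining Krause's theorem with the adjoint functor argument.
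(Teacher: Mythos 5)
Your proof is correct and follows essentially the same route as the paper: in both, the converse direction uses parts 2.1) and 1) of the two preliminary lemmas to show $\tau^{\cx^{\bot}}$ sends the perfect generators of $\cd$ to perfect generators of $\cx^{\bot}$, which (together with closure under coproducts coming from the smashing hypothesis) allows Brown representability and the adjoint functor argument to promote $\cx^{\bot}$ to an aisle. The only difference is that you unfold the chain $\cx^{\bot}=\Tria_{\cx^{\bot}}(\tau^{\cx^{\bot}}(\cp))$ explicitly before invoking Krause's theorem, whereas the paper compresses this into a single appeal to ``the adjoint functor argument.''
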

\begin{proof}
Indeed, if $(\cx,\cy,\cz)$ is a TTF triple, then $\cx$ is a smashing subcategory since $\cy$ being an aisle is always closed under coproducts. Conversely, if $\cx$ is a smashing subcategory, then $(\cx,\cy)$ is a t-structure on $\cd$. But now, by using Lemma \ref{Generators and infinite devissage} and Lemma \ref{(Super)perfectness and compactness} we have that $\tau^{\cy}$ takes the set of perfect generators of $\cd$ to a set of perfect generators $\cy$. Therefore, $\cy$ is a perfectly generated triangulated category closed under small coproducts in $\cd$, and by the adjoint functor argument we conclude that $\cy$ is an aisle.
\end{proof}

\subsection{B. Keller's Morita theory for derived categories}\label{B. Keller's Morita theory for derived categories}

Let $\ca$ be a small dg category (\cf \cite{Keller1994a, Keller2006b}). It was proved by B.~Keller \cite{Keller1994a} that its derived category $\cd\ca$ is a triangulated category compactly generated by the modules $A^{\we}:=\ca(?,A)$ represented by the objects $A$ of $\ca$. Conversely, he also proved \cite[Theorem 4.3]{Keller1994a} that every \emph{algebraic triangulated category} (namely, a triangulated category which is triangle equivalent to the stable category of a Frobenius category \cite{Heller60, Happel87, KellerVossieck87, GelfandManin, Keller1996}) with small coproducts and with a set $\cp$ of compact generators is the derived category of a certain dg category whose set of objects is equipotent to $\cp$.

The proof of Theorem 4.3 of \cite{Keller1994a} has two parts.

\emph{First part:} it is proved that every algebraic triangulated category admits an enhancement, \ie comes from an exact dg category. We say that a dg category $\ca'$ is \emph{exact} or \emph{pretriangulated} \cite{Keller1999, Keller2006b} if the image of the (fully faithful) Yoneda functor
\[\Zy 0\ca'\ra \cc\ca'\ko M\mapsto M^{\we}:=\ca'(?,M)
\]
is stable under shifts and extensions (in the sense of the exact structure on $\cc\ca$ in which the conflations are the degreewise split short exact sequences). If $\ca'$ is an exact dg category, then $\Zy 0\ca'$ becomes a Frobenius category and $\ul{\Zy 0\ca'}=\H 0\ca'$ is a full triangulated subcategory of $\ch\ca'$. B.~Keller has shown \cite[Example 2.2.c)]{Keller1999} that if $\cc$ is a Frobenius category with class of conflations $\ce$, then $\ul{\cc}=\H 0\ca'$ for the exact dg category $\ca'$ formed by the acyclic complexes with $\ce$-projective-injective components over $\cc$.

\emph{Second part:} it proves the following.

\begin{proposition}
Let $\ca'$ be an exact dg category such that the associated triangulated category $\H 0\ca'$ is compactly generated by a set $\cb$ of objects. Consider $\cb$ as a dg category, regarded as a full subcategory of $\ca'$. Then, the map
\[M\mapsto M^{\we}_{\ \ |_{\cb}}:=\ca'(?,M)_{|_{\cb}}
\]
induces a triangle equivalence
\[\H 0\ca'\arr{\sim}\cd\cb.
\]
\end{proposition}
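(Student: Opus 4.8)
The plan is to build the equivalence as the composition of a ``big'' Morita equivalence at the level of all dg modules, restricted to the appropriate subcategories. First I would consider the restriction dg functor along the inclusion $\cb\hookrightarrow\ca'$, namely $\rho\colon \cc\ca'\ra\cc\cb$, $M\mapsto M^{\we}_{\ |_{\cb}}=\ca'(?,M)_{|_{\cb}}$. This dg functor has a left adjoint (induction/tensoring up), and it passes to the derived level to give a triangle functor $\R\rho\colon \cd(\ca')\ra\cd\cb$ — here $\cd(\ca')$ denotes the derived category of the dg category $\ca'$, which contains $\H 0\ca'$ as the full subcategory on the representables $M^{\we}$ via the Yoneda functor (this uses that $\ca'$ is exact, so that Yoneda identifies $\H 0\ca'$ with a full triangulated subcategory of $\cd(\ca')$ closed under the relevant operations). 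The composition $\H 0\ca'\hookrightarrow\cd(\ca')\arr{\R\rho}\cd\cb$ will be the asserted functor, since for a representable module $M^{\we}$ no derived correction is needed and $\R\rho(M^{\we})=\ca'(?,M)_{|_{\cb}}=M^{\we}_{\ |_\cb}$.

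Next I would verify the two defining properties of a triangle equivalence between compactly generated triangulated categories, using the standard ``generators go to generators, fully faithfully'' criterion. The category $\cd\cb$ is compactly generated by the free modules $B^{\we}=\cb(?,B)$, $B\in\cb$, by B.~Keller's theorem recalled just above. On the other side, $\H 0\ca'$ is compactly generated by $\cb$ by hypothesis. The functor sends $B\in\cb\subseteq\H 0\ca'$ to $B^{\we}_{\ |_\cb}=\cb(?,B)=B^{\we}$, i.e. it sends the chosen compact generators to the chosen compact generators. It therefore remains to check that it induces isomorphisms
\[
\H 0\ca'(B[n],B')\ra \cd\cb(B^{\we}[n],(B')^{\we})
\]
for all $B,B'\in\cb$ and all $n\in\Z$. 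The left-hand side is $\H n\ca'(B,B')=\H n(\cb(B,B'))$ by definition of $\ca'$ being exact (morphisms in $\H 0\ca'$ are computed in the dg category, and $\cb$ is a full dg subcategory). The right-hand side is $\cd\cb(B^{\we}[n],(B')^{\we})$, and since $B^{\we}$ is a representable (hence cofibrant/homotopically projective) dg $\cb$-module, this equals $\H n(\Hom_{\cb}(B^{\we},(B')^{\we}))=\H n((B')^{\we}(B))=\H n(\cb(B,B'))$ by the dg Yoneda lemma. So the two sides agree, and one checks the comparison map is the obvious identification.

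Finally I would close the argument: a triangle functor between triangulated categories with small coproducts that preserves coproducts (which $\R\rho$ does, being a left derived-type functor / having the correct adjoint, and because restriction of modules commutes with coproducts), that is fully faithful on a set of compact generators of the source after landing among compact objects of the target, and whose image hits a generating set of the target, is automatically a triangle equivalence — this is the usual d\'evissage argument (the full subcategory of the source on which it is fully faithful is triangulated and closed under coproducts and contains the generators, hence is everything; the essential image is then triangulated, closed under coproducts, and contains a generating set, hence is everything). I expect the main obstacle to be the bookkeeping around the enhancement: one must be careful that $\H 0\ca'$ really does embed as a full triangulated subcategory of $\cd(\ca')$ stable under the operations used (Milnor colimits in particular), so that coproducts and triangles computed in $\H 0\ca'$ match those computed in $\cd(\ca')$ and are then carried correctly by $\R\rho$ to $\cd\cb$; once that identification is set up cleanly, the rest is the standard Morita-theoretic d\'evissage.
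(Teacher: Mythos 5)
The paper does not actually prove this proposition: it appears as a recollection of the second part of the proof of \cite[Theorem~4.3]{Keller1994a}, and the authors simply cite Keller. So there is no in-paper proof to compare against. At the level of strategy your proposal is indeed Keller's argument (define the comparison functor, verify the hypotheses of the d\'evissage criterion --- Keller's Lemma~4.2 --- namely coproduct preservation, full faithfulness on a generating set of compacts, and that the image generates).

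There is, however, a genuine gap in the detour through $\cd(\ca')$, and it is exactly the concern you raise at the end without resolving. The Yoneda embedding $\H 0\ca'\ra\cd\ca'$, $M\mapsto M^{\we}$, is fully faithful and exact (representables are $\ch$-projective; conflations of $\Zy 0\ca'$ become degreewise split short exact sequences in $\cc\ca'$), but it does \emph{not} preserve coproducts: for a family $(M_i)_{i\in I}$ with coproduct $M$ in $\H 0\ca'$, the canonical map $\coprod_i M_i^{\we}\ra M^{\we}$ is a quasi-isomorphism if and only if $\coprod_i\H n\ca'(A,M_i)\ra\H n\ca'(A,M)$ is bijective for \emph{every} object $A$ of $\ca'$ and every $n\in\Z$, i.e.\ if and only if every object of $\ca'$ is compact in $\H 0\ca'$. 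That is not assumed (only the subset $\cb$ is assumed compact) and fails in general. So although $\R\rho:\cd\ca'\ra\cd\cb$ does preserve coproducts, you cannot deduce coproduct preservation of the composite $\H 0\ca'\hookrightarrow\cd\ca'\ra\cd\cb$ by composing two coproduct-preserving functors; one of the two factors does not preserve coproducts.

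The repair is to drop $\cd(\ca')$ and argue directly on $F:\H 0\ca'\ra\cd\cb$, $M\mapsto\ca'(?,M)_{|_\cb}$. This is a triangle functor for the reasons above (restriction to $\cb$ preserves degreewise split sequences), and it preserves coproducts precisely because the objects of $\cb$ and their shifts are compact in $\H 0\ca'$: for each $B\in\cb$ the canonical map $\coprod_i\ca'(B,M_i)\ra\ca'(B,\coprod_i M_i)$ is then a quasi-isomorphism of complexes, and quasi-isomorphisms of $\cb$-modules are tested objectwise, so $\coprod_i F(M_i)\ra F(\coprod_i M_i)$ is invertible in $\cd\cb$. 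With that in place, your dg-Yoneda computation of $\cd\cb(B^{\we}[n],(B')^{\we})\cong\H n\cb(B,B')\cong\H 0\ca'(B[n],B')$ and the concluding d\'evissage are correct and recover Keller's Lemma~4.2 argument. (A side benefit of avoiding $\cd(\ca')$ is that you also sidestep the size issue of forming the derived category of the non-small dg category $\ca'$.)
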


The dg category associated to the Frobenius category in the first part of the proof of \cite[Theorem 4.3]{Keller1994a} is not very explicit. However, many times in practice we are already like in the second step of the proof, which allows us a better choice of the dg category. In what follows, we will recall how this better choice can be made.

Let $\cp$ be a set of objects of $\cd\ca$ and define $\cb$ as the dg subcategory of the exact dg category $\cc_{dg}\ca$ (\cf \cite{Keller2006b} for the notation) formed by the $\ch$\emph{-injective} or \emph{fibrant resolutions} $\textbf{i}P$ \cite{Keller1994a, Keller2006b} of the modules $P$ of $\cp$. Then we have a dg $\cb$-$\ca$-bimodule $X$ defined by
\[X(A,B):=B(A)
\]
for $A$ in $\ca$ and for $B$ in $\cb$, and we have a pair $(?\otimes_{\cb}X,\ch om_{\ca}(X,?))$ of adjoint dg functors
\[\xymatrix{\cc_{dg}\ca\ar@<1ex>[d]^{\ch om_{\ca}(X,?)} \\
\cc_{dg}\cb\ar@<1ex>[u]^{?\otimes_{\cb}X}
}
\]
For instance, $\ch om_{\ca}(X,?)$ is defined by $\ch om_{\ca}(X,M):=(\cc_{dg}\ca)(?,M)_{|_{\cb}}$ for $M$ in $\cc_{dg}\ca$. These functors induce a pair of adjoint triangle functors between the corresponding \emph{categories up to homotopy} \cite{Keller1994a, Keller2006b}
\[\xymatrix{\ch\ca\ar@<1ex>[d]^{\ch om_{\ca}(X,?)} \\
\ch\cb\ar@<1ex>[u]^{?\otimes_{\cb}X}.
}
\]
The \emph{total right derived functor} $\textbf{R}\ch om_{\ca}(X,?)$ is the composition
\[\cd\ca\arr{\bf i}\ch_{\bf i}\ca\hookrightarrow\ch\ca\arr{\ch om_{\ca}(X,?)}\ch\cb\ra\cd\cb,
\]
where $\textbf{i}$ is the $\ch$-\emph{injective resolution} or \emph{fibrant resolution} functor \cite{Keller1994a, Keller2006b}, and the \emph{total left derived functor} $?\otimes^{\textbf{L}}_{\cb}X$ is the composition
\[\cd\cb\arr{\bf p}\ch_{\bf p}\ca\hookrightarrow\ch\cb\arr{?\otimes_{\cb}X}\ch\ca\ra\cd\ca,
\]
where $\textbf{p}$ is the $\ch$-\emph{projective resolution} or \emph{cofibrant resolution} functor \cite{Keller1994a, Keller2006b}.
They form a pair of adjoint triangle functors at the level of derived categories
\[\xymatrix{\cd\ca\ar@<1ex>[d]^{\textbf{R}\ch om_{\ca}(X,?)} \\
\cd\cb\ar@<1ex>[u]^{?\otimes_{\cb}^\textbf{L}X}
}
\]

The following is an easy consequence of Proposition \ref{B. Keller's Morita theory for derived categories}.

\begin{corollary}
Assume that the objects of $\cp$ are compact in the full triangulated subcategory $\Tria(\cp)$ of $\cd\ca$. Then:
\begin{enumerate}[1)]
\item the functors $(?\otimes_{\cb}^\textbf{L}X,\textbf{R}\ch om_{\ca}(X,?))$ induce mutually quasi-inverse triangle equivalences
\[\xymatrix{\Tria(\cp)\ar@<1ex>[rr]^{\textbf{R}\ch om_{\ca}(X,?)} && \cd\cb\ar@<1ex>[ll]^{?\otimes_{\cb}^\textbf{L}X}
}
\]
which gives a bijection between the objects of $\cp$ and the $\cb$-modules $B^{\we}$ represented by the objects $B$ of $\cb$,
\item $\Tria(\cp)$ is an aisle in $\cd\ca$ with truncation functor given by the map
\[M\mapsto \textbf{R}\ch om_{\ca}(X,M)\otimes^{\bf L}_{\cb}X.
\]
\end{enumerate}
\end{corollary}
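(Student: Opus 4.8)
The plan is to reduce everything to Proposition~\ref{B. Keller's Morita theory for derived categories}, applied to a suitable exact dg subcategory of $\cc_{dg}\ca$ that contains $\cb$. First I would observe that, since the objects of $\cp$ are compact (hence perfect) in $\Tria(\cp)$, the adjoint functor argument (Lemma~\ref{(Super)perfectness and compactness}) together with H.~Krause's theorem \cite{Krause2002} makes $\Tria(\cp)$ an aisle in $\cd\ca$; in particular it has small coproducts, is closed under small coproducts in $\cd\ca$, and is compactly generated by $\cp$. Next I would let $\ca'$ be the full dg subcategory of $\cc_{dg}\ca$ whose objects are the $\ch$-injective dg $\ca$-modules whose image in $\cd\ca$ lies in $\Tria(\cp)$. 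Being closed under shifts and mapping cones, $\ca'$ is exact (\cf \cite{Keller1999}); the canonical functor $\ch\ca\ra\cd\ca$ restricts to a triangle equivalence $\H 0\ca'\arr{\sim}\Tria(\cp)$; and $\cb$ is a full dg subcategory of $\ca'$, since each $\textbf{i}P\ko P\in\cp$, is $\ch$-injective with image $P$ in $\Tria(\cp)$. Under the equivalence $\H 0\ca'\simeq\Tria(\cp)$ the objects of $\cb$ correspond to those of $\cp$, so $\H 0\ca'$ is compactly generated by $\cb$.

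Then Proposition~\ref{B. Keller's Morita theory for derived categories} applies and says that $M\mapsto M^{\we}_{\ |_{\cb}}=\ca'(?,M)_{|_{\cb}}$ induces a triangle equivalence $\H 0\ca'\arr{\sim}\cd\cb$. For $M$ in $\ca'$ one has $\ca'(?,M)_{|_{\cb}}=(\cc_{dg}\ca)(?,M)_{|_{\cb}}=\ch om_{\ca}(X,M)$, because $\ca'$ is full in $\cc_{dg}\ca$ and contains $\cb$; and since the objects of $\ca'$ are already $\ch$-injective, under the identification $\H 0\ca'\simeq\Tria(\cp)$ this equivalence is nothing but the restriction of $\textbf{R}\ch om_{\ca}(X,?)$ to $\Tria(\cp)$. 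So $\textbf{R}\ch om_{\ca}(X,?)$ restricts to a triangle equivalence $\Tria(\cp)\arr{\sim}\cd\cb$, and it carries $P\in\cp$ to $\textbf{R}\ch om_{\ca}(X,P)\cong\ch om_{\ca}(X,\textbf{i}P)=\cb(?,\textbf{i}P)=(\textbf{i}P)^{\we}$; as $P$ runs through $\cp$, this is exactly the claimed bijection with the representable $\cb$-modules.

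It then remains to identify the quasi-inverse and the truncation functor. For a representable $\cb$-module $B^{\we}$ with $B=\textbf{i}P$, the co-Yoneda isomorphism gives $B^{\we}\otimes_{\cb}X\cong B$ in $\cc_{dg}\ca$, hence $B^{\we}\otimes_{\cb}^{\textbf{L}}X\cong\textbf{i}P\cong P$ in $\cd\ca$ (representable dg modules being $\ch$-projective). Since $?\otimes_{\cb}^{\textbf{L}}X$ preserves triangles and small coproducts and $\cd\cb=\Tria(\{B^{\we}\}_{B\in\cb})$, its essential image is contained in $\Tria(\cp)$, so we may regard it as a functor $\cd\cb\ra\Tria(\cp)$; restricting the adjunction $(?\otimes_{\cb}^{\textbf{L}}X,\textbf{R}\ch om_{\ca}(X,?))$ to $\Tria(\cp)$ then exhibits $?\otimes_{\cb}^{\textbf{L}}X$ as a left adjoint of the equivalence found above, hence as a quasi-inverse of it; this proves 1). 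For 2), $\Tria(\cp)$ is an aisle by the first paragraph, and writing $j$ for the inclusion $\Tria(\cp)\hookrightarrow\cd\ca$, for $N$ in $\Tria(\cp)$ and $M$ in $\cd\ca$ (using $N\cong\textbf{R}\ch om_{\ca}(X,N)\otimes_{\cb}^{\textbf{L}}X$ together with the adjunction and the equivalence above) one obtains natural isomorphisms
\[\cd\ca(jN,M)\cong\cd\cb(\textbf{R}\ch om_{\ca}(X,N),\textbf{R}\ch om_{\ca}(X,M))\cong\Tria(\cp)(N,\textbf{R}\ch om_{\ca}(X,M)\otimes_{\cb}^{\textbf{L}}X),
\]
so $M\mapsto\textbf{R}\ch om_{\ca}(X,M)\otimes_{\cb}^{\textbf{L}}X$ is right adjoint to $j$, i.e. it is the truncation functor onto the aisle $\Tria(\cp)$.

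The hard part is the bookkeeping of the first two paragraphs: setting up the exact dg category $\ca'$ so that $\H 0\ca'\simeq\Tria(\cp)$ with $\cb\subseteq\ca'$ compactly generating it, and then checking that the abstract Morita equivalence supplied by Proposition~\ref{B. Keller's Morita theory for derived categories} is literally the restriction of the concretely defined functor $\textbf{R}\ch om_{\ca}(X,?)$. Once that identification is secured, the remaining points (the description of the quasi-inverse and of the truncation functor) are routine adjunction manipulations, relying only on the co-Yoneda computation $B^{\we}\otimes_{\cb}X\cong B$.
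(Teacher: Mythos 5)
Your proof is correct and takes essentially the same approach as the paper: part 1) is established by exactly the same reduction to Proposition~\ref{B. Keller's Morita theory for derived categories} via an exact dg subcategory of $\cc_{dg}\ca$ whose $\H 0$-category is $\Tria(\cp)$ and which is compactly generated by $\cb$, and you merely make the construction of that subcategory $\ca'$ more explicit. For part 2), where the paper shows that the cone $M'$ of the counit $\delta_M$ satisfies $\textbf{R}\ch om_{\ca}(X,M')=0$ and hence, by infinite d\'evissage, lies in $\Tria(\cp)^{\bot}$, you instead verify directly that $M\mapsto\textbf{R}\ch om_{\ca}(X,M)\otimes_{\cb}^{\textbf{L}}X$ is right adjoint to the inclusion $j$; the two arguments are equivalent and both rest on the full faithfulness of $?\otimes_{\cb}^{\textbf{L}}X$ coming from part 1).
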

\begin{proof}
1) Since the $\ch$-injective resolution functor $\bf i:\cd\ca\arr{\sim}\ch_{\bf i}\ca$ is a triangle equivalence, it induces a triangle equivalence between $\Tria(\cp)$ and a certain full triangulated subcategory of $\ch_{\bf i}\ca$. This subcategory is algebraic, \ie the stable category $\ul{\cc}$ of a certain Frobenius category $\cc$, because it is a subcategory of $\ch\ca$.  Since $\ch\ca$ is the $\H 0$-category of the exact dg category $\cc_{dg}\ca$, then $\ul{\cc}$ is the $\H 0$-category of an exact dg subcategory of $\cc_{dg}\ca$. Moreover,  $\ul{\cc}$ is compactly generated by the objects of $\cb$. Then, by Proposition \ref{B. Keller's Morita theory for derived categories} the restriction of $\ch om_{\ca}(X,?)$ to $\ul{\cc}$ induces a triangle equivalence
\[\ul{\cc}\arr{\ch om_{\ca}(X,?)}\ch\cb\ra\cd\cb.
\]
The picture is:
\[\xymatrix{ & \H 0(\cc_{dg}\ca)=\ch\ca & \\
\cd\ca\ar[r]^{\bf i}_{\sim} & \ch_{\bf i}\ca\ar@{^(->}[u] & \\
\Tria(\cp)\ar[r]_{\sim}\ar@{^(->}[u] & \ul{\cc}\ar[r]_{\sim}\ar@{^(->}[u] & \cd\cb
}
\]
The composition of the bottom arrows is the restriction of $\textbf{R}\ch om_{\ca}(X,?)$ to $\Tria(\cp)$. Notice that by the adjoint functor argument (\cf Lemma \ref{(Super)perfectness and compactness}) $\Tria(\cp)$ is an aisle, and so $\Tria(\cp)=\ ^{\bot}(\Tria(\cp)^{\bot})$. Finally, by adjunction the image of $?\otimes_{\cb}^\textbf{L}X$ is in $\ ^{\bot}(\Tria(\cp)^{\bot})=\Tria(\cp)$. Alternatively, we can use that $\cd\cb$ satisfies the principle of infinite d\'{e}vissage with respect to the modules $B^{\we}\ko B\in\cb$, to deduce that the image of $?\otimes^{\L}_{\cb}X$ is contained in $\Tria(\cp)$. Then, we can prove that
\[?\otimes^{\L}_{\cb}X:\cd\cb\ra\Tria(\cp)
\]
is a triangle equivalence by using \cite[Lemma 4.2]{Keller1994a}.

2) Since $?\otimes^\textbf{L}_{\cb}X:\cd\cb\ra\cd\ca$ is fully faithful, the unit $\eta$ of the adjunction $(?\otimes_{\cb}^\textbf{L}X,\textbf{R}\ch om_{\ca}(X,?))$ is an isomorphism. Then, when we apply the functor $?\otimes_{\cb}^\textbf{L}X\circ \textbf{R}\ch om_{\ca}(X,?)$ to the counit $\delta$ we get an isomorphism. This shows that for each module $M$ in $\cd\ca$, the triangle of $\cd\ca$
\[\textbf{R}\ch om_{\ca}(X,M)\otimes^{\bf L}_{\cb}X\arr{\delta_{M}}M\ra M'\arr{+}
\]
satisfies that $\textbf{R}\ch om_{\ca}(X,M')\otimes^{\bf L}_{\cb}X=0$. Since $?\otimes^{\bf L}_{\cb}X$ is fully faithful, then $\textbf{R}\ch om_{\ca}(X,M')=0$. That is to say,
\[\textbf{R}\ch om_{\ca}(X,M')(P)=(\cc_{dg}\ca)(P,\textbf{i}M')
\]
is acyclic for each object $P$ in $\cp$. But then, we have
\[\H n(\cc_{dg}\ca)(P,\textbf{i}M')=(\ch\ca)(P,\textbf{i}M'[n])\cong(\cd\ca)(P,M'[n])=0
\]
for each object $P$ of $\cp$ and each integer $n\in\Z$. This implies, by infinite d\'{e}vissage, that $M'$ belongs to the coaisle $\Tria(\cp)^{\bot}$ of $\Tria(\cp)$.
\end{proof}

\begin{remark}
This result generalizes \cite[Theorem 1.6]{Jorgensen2006} and \cite[Theorem 2.1]{DwyerGreenlees}. Indeed, if $\ca$ is the dg category associated to the dg $k$-algebra $A$ and the set $\cp$ has only one element $P$, then $\cb$ is the dg category associated to the dg algebra $B:=(\cc_{dg}A)(\textbf{i}P,\textbf{i}P)$, the dg $\cb$-$\ca$-bimodule $X$ corresponds to the dg $B$-$A$-bimodule $\textbf{i}P$ and the triangle equivalence
\[\textbf{R}\ch om_{\ca}(X,?):\Tria(P)\arr{\sim}\cd B
\]
is given by $\textbf{R}\Hom_{A}(\textbf{i}P,?)$.
\end{remark}

\section{Parametrization}\label{Parametrization}

\subsection{Recollement-defining classes}\label{Recollement-defining classes}

A class $\cp$ of objects of a triangulated category $\cd$ is \emph{recollement-defining} if the class $\cy$ of objects which are right orthogonal to all the shifts  of objects of $\cp$ is both an aisle and a coaisle in $\cd$.

Notice that, in this case, one has that the triangulated category $\ ^{\bot}\cy$ is generated by $\cp$ thanks to Lemma \ref{Generators and infinite devissage}.

In the following subsections, we will show how to weaken the conditions imposed to a set in order to be recollement-defining in some particular frameworks.

\subsection{Recollement-defining sets in aisled categories}\label{Recollement-defining sets in aisled categories}

A triangulated category $\cd$ is \emph{aisled} if it has a set of generators, small coproducts and for every set $\cq$ of objects of $\cd$ we have that $\Tria(\cq)$ is an aisle in $\cd$.

\begin{lemma}
Let $\cd$ be an aisled triangulated category. Then, for a set $\cp$ of objects of $\cd$ the following assertions are equivalent:
\begin{enumerate}[1)]
\item $\cp$ is a recollement-defining set.
\item The class $\cy$ of objects of $\cd$ which are right orthogonal to all the shifts of objects of $\cp$ is closed under small coproducts.
\end{enumerate}
In this case $\Tria(\cp)=\ ^{\bot}\cy$.
\end{lemma}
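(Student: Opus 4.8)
The plan is to show the two implications separately, with the content concentrated in $2)\Rightarrow 1)$, and then read off the identity $\Tria(\cp)={}^{\bot}\cy$ from the machinery already developed.

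The implication $1)\Rightarrow 2)$ is immediate: if $\cp$ is recollement-defining then $\cy$ is in particular an aisle (indeed a coaisle), and aisles are always closed under small coproducts since a left adjoint — here $\tau^{\cy}$ — preserves coproducts, or equivalently because the defining orthogonality $\cy={}^{\bot}(\cdots)$... more simply: a coaisle is closed under extensions, shifts in the positive direction and, being of the form $\cq^{\bot}$, it is closed under all products and coproducts that exist. So there is nothing to do here beyond a one-line remark.

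For $2)\Rightarrow 1)$, first I would use that $\cd$ is aisled to get that $\cx:=\Tria(\cp)$ is an aisle in $\cd$; let $\cx^{\bot}$ be its coaisle. By Lemma~\ref{Generators and infinite devissage}, part 2.2), since $\Tria(\cp)$ is a triangulated aisle and $\cp$ generates $\cx=\Tria(\cp)$ (here one invokes the Remark after Lemma~\ref{(Super)perfectness and compactness}, or just the definition of $\Tria$), the coaisle $\cx^{\bot}$ consists precisely of the objects right orthogonal to all shifts of objects of $\cp$ — that is, $\cx^{\bot}=\cy$. Thus $(\cx,\cy)=(\Tria(\cp),\Tria(\cp)^{\bot})$ is already a t-structure, so $\cy$ is a coaisle; it remains to see that $\cy$ is also an aisle. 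For this I invoke the adjoint functor argument, Lemma~\ref{(Super)perfectness and compactness} part 2): $\cy$ is a full triangulated subcategory of $\cd$, and by hypothesis 2) it is closed under small coproducts, so if it satisfies Brown representability it is an aisle. To obtain Brown representability for $\cy$, apply Lemma~\ref{Generators and infinite devissage} part 2.1) with the t-structure $(\cx,\cy)$: since $\cd$ has a set of generators $\cq$ (it is aisled) and $(\cx,\cy)$ has triangulated aisle, $\tau^{\cy}(\cq)$ is a set of generators of $\cy$; combined with the fact that $\cy$ is closed under coproducts in $\cd$ (hence has its own small coproducts) and that $\cd$, being aisled with a set of generators, satisfies Brown representability, one concludes $\cy$ is well generated / satisfies Brown representability and hence is an aisle by the adjoint functor argument. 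Therefore $\cy$ is both an aisle and a coaisle, i.e. $\cp$ is recollement-defining.

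Finally, the last sentence $\Tria(\cp)={}^{\bot}\cy$ is a by-product: we showed $\cy=\Tria(\cp)^{\bot}$, and since $(\Tria(\cp),\cy)$ is a t-structure we have $\Tria(\cp)={}^{\bot}(\Tria(\cp)^{\bot})={}^{\bot}\cy$, using that the aisle of a t-structure is recovered as the left orthogonal of its coaisle. The main obstacle I anticipate is bookkeeping around Brown representability for $\cy$: one must be slightly careful that "aisled" genuinely supplies enough (a set of generators plus the device of Lemma~\ref{(Super)perfectness and compactness} 2)) to run the adjoint functor argument on $\cy$ rather than on $\cd$, but this is exactly what Lemma~\ref{Generators and infinite devissage} 2.1) together with the hypothesis "$\cy$ closed under coproducts" is designed to bridge, so no new idea is needed — just the correct chaining of the two lemmas already proved.
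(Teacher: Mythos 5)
Your outline is right up to the point where you must show that $\cy$ is an aisle, but the route you propose there has a genuine gap. You assert that $\cd$, ``being aisled with a set of generators, satisfies Brown representability'' and then that $\cy$, having a set of generators and being closed under coproducts, inherits Brown representability. Neither step is justified. The hypothesis \emph{aisled} (for every set $\cq$ of objects, $\Tria(\cq)$ is an aisle) is not known to imply Brown representability: the paper's remark derives ``aisled'' from ``well generated'' via Porta's theorem and the adjoint functor argument, but never the converse. And even if $\cd$ did satisfy Brown representability, merely exhibiting a set of generators for a coproduct-closed triangulated subcategory $\cy$ does not make $\cy$ well generated or Brown-representable; for that you would need the generators $\tau^{\cy}(\cg)$ to be perfect in $\cy$ (or some comparable smallness), which you have not established and which the hypotheses do not give.

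The paper sidesteps Brown representability for $\cy$ entirely by invoking the \emph{aisled} hypothesis a second time. Since $\tau^{\cy}(\cg)$ is a set of objects of $\cd$, the aisled assumption says directly that $\Tria(\tau^{\cy}(\cg))$ is an aisle in $\cd$. It lies inside $\cy$, because $\cy$ is a full triangulated subcategory closed under small coproducts containing $\tau^{\cy}(\cg)$. Now the first part of the lemma on generators and infinite d\'evissage, applied with $\cd'=\cy$ and using the fact (your correct application of part 2.1 of the same lemma) that $\tau^{\cy}(\cg)$ generates $\cy$, gives $\Tria(\tau^{\cy}(\cg))=\cy$. Hence $\cy$ is an aisle, and the proof closes without any appeal to Brown representability. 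The rest of your write-up is fine: the implication $1)\Rightarrow 2)$ is indeed immediate since an aisle is by definition closed under small coproducts, the identification $\cy=\Tria(\cp)^{\bot}$ by infinite d\'evissage is correct, and the closing remark $\Tria(\cp)={}^{\bot}\cy$ follows as you say.
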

\begin{proof}
$2)\Rightarrow 1)$ Since $\cd$ is aisled, then $\Tria(\cp)$ is an aisle in $\cd$. By infinite d\'{e}vissage, we have that the coaisle is precisely $\cy$. If $\cg$ is a set of generators of $\cd$, then by using Lemma \ref{Generators and infinite devissage} we know that $\tau^{\cy}(\cg)$ is a set of generators of $\cy$. Notice that $\Tria(\tau^{\cy}(\cg))$ is an aisle in $\cd$ contained in $\cy$. Hence, by Lemma \ref{Generators and infinite devissage}, it turns out that $\Tria(\tau^{\cy}(\cg))=\cy.$ This proves that $\cy$ is an aisle in $\cd$.
\end{proof}

In the remainder of the subsection we will show that aisled triangulated categories do exist and that some of the most familiar triangulated categories are among them.

First, notice that by using Corollary 3.12 of \cite{Porta2007} and the adjoint functor argument one can prove that every \emph{well-generated} \cite{Neeman2001} triangulated category (in particular, the derived category of any small dg category) is aisled. Therefore, well-generated triangulated categories form a class of triangulated categories which are aisled by `global' reasons. Let us present a class of triangulated categories which are aisled by `local reasons'. For this we need some terminology.

Let $\cc$ be a Frobenius category with small colimits. For a set $\cq$ of objects of $\cc$ we define
$\ci_{\cq}$ to be the set formed by the inflations $i_{Q}:Q\ra IQ$ where $Q$ runs through $\cq$. We say that $\cq$ is \emph{self-small} if it is closed under shifts (in the stable category $\ul{\cc}$) and its objects are small relative to $\ci_{\cq}$-cell (\cf \cite[Definition 2.1.3]{Hovey1999} for the definition of ``small'' and \cite[Definition 2.1.9]{Hovey1999} for the definition of $\ci_{\cq}$-cell).

The smallness condition is interesting since it allows us to construct aisles.

\begin{theorem}
If $\cc$ is a Frobenius category with small colimits and $\cq$ is a self-small set of objects of $\cc$, then:
\begin{enumerate}[1)]
\item $\Tria(\cq)$ is an aisle in $\ul{\cc}$ with associated coaisle given by the class $\cq^{\bot_{\ul{\cc}}}$.
\item The objects of $\Tria(\cq)$ are precisely those isomorphic to an $\ci_{\cq}$-cell complex.
\end{enumerate}
\end{theorem}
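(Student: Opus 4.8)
The plan is to construct the aisle $\Tria(\cq)$ explicitly as the class of $\ci_{\cq}$-cell complexes, using the small object argument, and to identify its right orthogonal. First I would build, for any object $M$ of $\ul{\cc}$, a functorial triangle $X_M\ra M\ra Y_M\ra X_M[1]$ by iterating the following step transfinitely: given a partial approximation $X\ra M$, form the coproduct of all squares
\[\xymatrix{Q\ar[r]\ar[d]_{i_Q} & X\ar[d]\\ IQ\ar[r] & X}\]
indexed by morphisms $Q\ra X$ (equivalently $Q\ra M$ not yet killed) with $Q$ ranging over shifts of objects of $\cq$, and push out; this is literally the attaching of $\ci_{\cq}$-cells. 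Running this through an ordinal $\lambda$ larger than the smallness bounds guaranteed by the hypothesis that $\cq$ is self-small, I get $X_M = \colim_{\beta<\lambda}X_\beta$, an $\ci_{\cq}$-cell complex, together with a map $X_M\ra M$. The smallness of objects of $\cq$ relative to $\ci_{\cq}$-cell is exactly what forces every morphism $Q[n]\ra \cone(X_M\ra M)$ to factor through some bounded stage and hence to be killed at the next stage, so $\cone(X_M\ra M)\in\cq^{\bot_{\ul{\cc}}}$ (using that $\ul{\cc}$-morphisms $Q[n]\ra?$ are detected on representatives in $\cc$, and that pushouts along inflations compute mapping cones in $\ul{\cc}$).

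Next I would check that the class $\cs$ of $\ci_{\cq}$-cell complexes is contained in $\Tria(\cq)$ and is closed under extensions, shifts and small coproducts, and contains $\cq$; closure under coproducts and the retract-closure needed to make $\cs$ exactly the objects of $\Tria(\cq)$ follow from the standard cell-complex bookkeeping (a retract of a cell complex is again one, after possibly enlarging the cell structure — or one argues directly that $\cs$ is already triangulated since every stage $X_{\beta+1}$ sits in a triangle $\coprod Q[n]\ra X_\beta\ra X_{\beta+1}\ra$ with the left term a coproduct of shifts of objects of $\cq$). Combined with the previous paragraph, the triangle $X_M\ra M\ra Y_M\ra$ with $X_M\in\cs\subseteq\Tria(\cq)$ and $Y_M\in\cq^{\bot_{\ul\cc}}$ shows, by the characterization of aisles via such truncation triangles (\cf \cite{KellerVossieck88}, and the fact that $\Tria(\cq)^{\bot}\subseteq\cq^{\bot_{\ul\cc}}$ trivially while the reverse inclusion is immediate once $\Tria(\cq)=\cs$), that $\Tria(\cq)$ is an aisle with coaisle $\cq^{\bot_{\ul\cc}}$. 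This simultaneously proves part 1) and the inclusion ``cell complexes $\subseteq\Tria(\cq)$'' of part 2); for the reverse inclusion in 2), note $\cs$ is triangulated, closed under coproducts, and contains $\cq$, hence contains $\Tria(\cq)$.

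The main obstacle I anticipate is the homotopy-categorical subtlety: the construction of cell complexes and the pushout/attaching step live in $\cc$, but the statement and the orthogonality condition $\cq^{\bot_{\ul\cc}}$ live in the stable category $\ul{\cc}$. So the delicate point is to verify that (i) pushouts along the inflations $i_Q$ in $\cc$ do induce, on passage to $\ul{\cc}$, triangles with the expected mapping cones, and (ii) the smallness hypothesis — which is phrased in $\cc$ relative to $\ci_{\cq}$-cell — genuinely lets one conclude that $\ul{\cc}(Q[n], Y_M)=0$, i.e. that every $\cc$-morphism $Q[n]\ra Y_M$ factors through a projective-injective after passing to a finite stage. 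One must also be careful that ``closed under shifts in $\ul{\cc}$'', which is part of the definition of self-small, is what makes the family of cells stable under the shift needed to detect all of $\cq^{\bot_{\ul\cc}}$ rather than just $\cq^{\bot}$ without shifts. Everything else is the routine transfinite-induction bookkeeping of the small object argument adapted to the Frobenius/stable setting.
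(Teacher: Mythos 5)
The paper defers the proof to \cite{Nicolas2007b}, so there is no in-paper argument to compare against. Your framework — a transfinite small object argument with cells $i_Q\colon Q\ra IQ$, bounded by the smallness hypothesis — is the right one, and the subtleties you flag are real; in fact your worry (ii) is a genuine gap. You build the truncation $X_M$ as an increasing $\ci_{\cq}$-cell complex $X_0\subset X_1\subset\cdots$ with $X_M=\lid X_\beta\ra M$, and then assert that smallness forces each $Q[n]\ra\cone(X_M\ra M)$ to factor through a bounded stage. But ``$\cq$ small relative to $\ci_{\cq}$-cell'' concerns $\cc$-maps out of $Q$ into a $\lambda$-sequence whose transition maps are relative $\ci_{\cq}$-cell complexes, i.e.\ \emph{inflations} obtained by pushing out the $i_Q$; whereas $Y_M:=\cone(X_M\ra M)$ is the colimit of the cokernels $Y_\beta=M/X_\beta$, and the induced maps $Y_\beta\ra Y_{\beta+1}$ are \emph{deflations}. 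The hypothesis simply does not apply to that sequence, so the claimed bounded-stage factorization of $Q[n]\ra Y_M$ is unavailable. (Relatedly, the lifting property the small object argument actually delivers for $X_M\ra M$ yields only injectivity of $\ul{\cc}(Q,X_M)\ra\ul{\cc}(Q,M)$, not the bijectivity needed to kill the cone; and your attaching square has $IQ\ra X$ in the lower right, where the pushout should be.)

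The construction should run on the other leg of the triangle. Put $Y_0:=M$ and at each successor stage let $Y_{\beta+1}$ be the pushout of $\coprod i_Q$ along the canonical map $\coprod_{Q\ra Y_\beta}Q\ra Y_\beta$, the coproduct running over all $\cc$-maps from objects of $\cq$ (closed under shifts). Each $Y_\beta\hookrightarrow Y_{\beta+1}$ is now a pushout of a coproduct of maps in $\ci_{\cq}$, so $M\hookrightarrow Y_M:=\lid Y_\beta$ is an honest relative $\ci_{\cq}$-cell complex and smallness applies on the nose: any $\cc$-map $Q\ra Y_M$ factors through some $Y_\beta$ and is then killed — it factors through $IQ$ — at stage $\beta+1$, giving $\ul{\cc}(Q,Y_M)=0$. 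The fiber $X_M=(Y_M/M)[-1]$ lies in $\Tria(\cq)$ (since $Y_M/M$ is an $\ci_{\cq}$-cell complex built from $0$ and $\Tria(\cq)$ is stable under shift), and $X_M\ra M\ra Y_M\ra X_M[1]$ is the truncation triangle establishing 1). For 2), apply the construction to $M\in\Tria(\cq)$: then $M\ra Y_M$ is null because $\Tria(\cq)\subseteq{}^{\bot}(\cq^{\bot_{\ul{\cc}}})$, so $Y_M$ is a summand of $X_M[1]$ and lies in $\Tria(\cq)\cap\cq^{\bot_{\ul{\cc}}}=0$; hence $M\cong(Y_M/M)[-1]$, and since $\cq$ is closed under shift this desuspension is again isomorphic in $\ul{\cc}$ to an $\ci_{\cq}$-cell complex. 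Together with the easy inclusion of cell complexes in $\Tria(\cq)$ that you already describe, this completes the argument.
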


The proof of the above theorem can be found in \cite{Nicolas2007b}. Here we have more examples of aisled triangulated categories.

\begin{example}
Let $\ca$ be a small dg category. Then the category of right dg $\ca$-modules, $\cc\ca$, is a Frobenius category (with conflations given by the degreewise split short exact sequences) such that every set of objects closed under shifts is self-small. To prove it, one can easily generalize the argument of \cite[Lemma 2.3.2]{Hovey1999}. Notice that the category of right dg $\ca$-modules up to homotopy $\ch\ca$ is not aisled since in general it does not admit a set of generators (\cf \cite[Lemma E.3.2]{Neeman2001}). However, for every set $\cq$ of objects of $\ch\ca$ we have that $\Tria_{\ch\ca}(\cq)$ is an aisle in $\ch\ca$ and an aisled triangulated category. In particular, by taking $\cq$ to be the set of representable modules $A^{\we}\ko A\in\ca$, we deduce that $\cd\ca$ is aisled.
\end{example}

\begin{remark}
It is worth noting that, thanks to Lemma \ref{Generators and infinite devissage}, if $\cd$ is an aisled triangulated category and $\cd'$ is a full triangulated subcategory of $\cd$ closed under small coproducts and generated by a set of objects $\cq$, then $\cd'=\Tria(\cq)$.
\end{remark}

\subsection{Recollement-defining sets in perfectly generated triangulated categories}\label{Recollement-defining sets in perfectly generated triangulated categories}

\begin{lemma}
Let $\cd$ be a perfectly generated triangulated category, $\cp$ a set of objects of $\cd$ and $\cy$ the class of objects of $\cd$ which are right orthogonal to all the shifts of objects of $\cp$. The following assertions are equivalent:
\begin{enumerate}[1)]
\item $\cp$ is recollement-defining.
\item $\cy$ is a coaisle in $\cd$ closed under small coproducts.
\end{enumerate}
If $\cp$ consists of perfect objects, the above statements are also equivalent to:
\begin{enumerate}[3)]
\item $\cy$ is closed under small coproducts.
\end{enumerate}
In this last case $\Tria(\cp)=\ ^{\bot}\cy$.
\end{lemma}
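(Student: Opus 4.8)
The plan is to establish the cycle of implications $1)\Rightarrow 2)\Rightarrow 3)\Rightarrow 1)$, where the last implication uses the perfectness hypothesis on $\cp$. The implication $1)\Rightarrow 2)$ is essentially a matter of unwinding definitions together with the standard fact that an aisle is always closed under small coproducts: if $\cp$ is recollement-defining, then by definition $\cy$ is both an aisle and a coaisle, and aisles are closed under coproducts. The implication $2)\Rightarrow 3)$ is trivial. So the content is entirely in $3)\Rightarrow 1)$, and this is where I expect the main difficulty to lie.

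For $3)\Rightarrow 1)$, assume $\cp$ consists of perfect objects and $\cy$ is closed under small coproducts. By Lemma \ref{(Super)perfectness and compactness} (the adjoint functor argument, applied to the set $\cp$ of objects perfect in $\Tria(\cp)$), $\Tria(\cp)$ is an aisle in $\cd$; let $\cx := \Tria(\cp)$. By the Remark following Lemma \ref{(Super)perfectness and compactness} (infinite d\'evissage), $\cx$ is generated by $\cp$, and by part 2.2) of Lemma \ref{Generators and infinite devissage} the coaisle of this t-structure is precisely $\cx^{\bot} = \cy$ — so in particular $\cy$ is already a coaisle, and $(\cx,\cy)$ is a t-structure. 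It remains to show that $\cy$ is also an \emph{aisle}. Here is where I would invoke the perfect generation of $\cd$ together with hypothesis 3): since $\cy$ is closed under small coproducts in $\cd$ and $\cd$ is perfectly generated, I want to produce a set of perfect generators of $\cy$ and then apply the adjoint functor argument to conclude $\cy$ is an aisle. Concretely, let $\cg$ be a set of perfect generators of $\cd$; by part 2.1) of Lemma \ref{Generators and infinite devissage}, $\tau^{\cy}(\cg)$ generates $\cy$, and by part 1) of Lemma \ref{(Super)perfectness and compactness} (using that the inclusion $\cy \hookrightarrow \cd$ preserves coproducts, which is exactly hypothesis 3) each object of $\tau^{\cy}(\cg)$ is perfect in $\cd$, hence perfect in $\Tria(\tau^{\cy}(\cg)) \subseteq \cy$. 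Thus $\Tria(\tau^{\cy}(\cg))$ is an aisle in $\cd$ contained in $\cy$ and generated by $\tau^{\cy}(\cg)$; by part 1) of Lemma \ref{Generators and infinite devissage}, $\Tria(\tau^{\cy}(\cg)) = \cy$, so $\cy$ is an aisle. Combined with the fact that $\cy$ is a coaisle, this says $\cp$ is recollement-defining.

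Finally, for the last assertion ($\Tria(\cp) = {}^{\bot}\cy$ in case 3): we have shown $\cx = \Tria(\cp)$ is an aisle with coaisle $\cy$, so ${}^{\bot}\cy = {}^{\bot}(\cx^{\bot}) = \cx$ since aisles are closed under the operation $(-) \mapsto {}^{\bot}((-)^{\bot})$; alternatively this is immediate from the remark following the definition of recollement-defining class.

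The step I expect to be the main obstacle is verifying carefully that part 1) of Lemma \ref{(Super)perfectness and compactness} applies, i.e. that hypothesis 3) ($\cy$ closed under small coproducts) is exactly the condition "$y\colon\cy\hookrightarrow\cd$ preserves small coproducts" needed there — one must check that coproducts computed in $\cy$ agree with those in $\cd$, which follows because $\cy$ is a full subcategory closed under the coproducts of $\cd$ and hence those coproducts serve as coproducts in $\cy$. Everything else is a bookkeeping assembly of results already proved in the excerpt.
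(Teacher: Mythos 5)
Your proposed cycle $1)\Rightarrow 2)\Rightarrow 3)\Rightarrow 1)$ has a logical gap: the last implication uses the perfectness hypothesis on $\cp$, so the cycle only closes under that hypothesis. The lemma, however, asserts that $1)\Leftrightarrow 2)$ holds for \emph{arbitrary} sets $\cp$ of objects of $\cd$ (only the equivalence with 3) requires $\cp$ to consist of perfect objects). Your scheme leaves $2)\Rightarrow 1)$ unproved in the general case, and this is in fact where the real content lies --- the paper's own proof devotes its effort to exactly this implication, invoking the Corollary of the Smashing subcategories subsection (that in a perfectly generated category a smashing subcategory $^{\bot}\cy$ yields a TTF triple, so $\cy$ is also an aisle). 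Your remark that ``the content is entirely in $3)\Rightarrow 1)$'' is the source of the problem: once $2)\Rightarrow 1)$ is known, $3)\Rightarrow 1)$ is obtained from it simply by showing $3)\Rightarrow 2)$, which is precisely the part where perfectness of $\cp$ enters via the adjoint functor argument.

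The good news is that the paragraph you wrote to show ``$\cy$ is also an aisle'' never uses perfectness of $\cp$; it only uses that $\cy$ is a coaisle closed under small coproducts and that $\cd$ is perfectly generated. Reorganized, it is exactly a proof of $2)\Rightarrow 1)$, and this makes your argument essentially the same as the paper's (you unfold the Corollary on smashing subcategories instead of citing it). Two small corrections while you are at it. First, Lemma \ref{(Super)perfectness and compactness} part 1) tells you that $\tau^{\cy}G$ is perfect \emph{in} $\cy$, not in $\cd$ (going the other way would require $\cy$ to already be a smashing subcategory, which is what you are trying to prove); since $\Tria(\tau^{\cy}(\cg))$ is a full triangulated subcategory of $\cy$ closed under small coproducts, perfectness in $\cy$ does descend to it, so the conclusion you need survives the correction. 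Second, in $3)\Rightarrow 2)$ one should note explicitly that $\cp$ being perfect in $\cd$ implies $\cp$ is perfect in $\Tria(\cp)$ --- this is immediate since $\Tria(\cp)$ is full and closed under small coproducts of $\cd$, but it is the hypothesis required to apply the adjoint functor argument.
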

\begin{proof}
$2)\Rightarrow 1)$ $\ ^{\bot}\cy$ is a smashing subcategory in a perfectly generated triangulated category, which implies that its associated coaisle $\cy$ is also an aisle (\cf subsection \ref{Smashing subcategories}).

$3)\Rightarrow 2)$ It is clear that $\Tria(\cp)$ is an aisle in $\cd$ (use the adjoint functor argument, \cf Lemma \ref{(Super)perfectness and compactness}) whose associated coaisle is $\cy$ by infinite d\'{e}vissage.
\end{proof}

Notice that any set of superperfect (\eg compact) objects of a perfectly ge\-ne\-ra\-ted triangulated category satisfies condition 3) of the above lemma, and so it is recollement-defining.

\subsection{Parametrization of TTF triples on triangulated categories}\label{Parametrization of TTF triples on triangulated categories}

\begin{proposition}
Let $\cd$ be a triangulated category with a set of generators. Consider the map which takes a set $\cp$ of objects of $\cd$ to the triple
\[(\ ^{\bot}\cy,\cy,\cy^{\bot})
\]
of subcategories of $\cd$, where $\cy$ is formed by those objects which are right orthogonal to all the shifts of objects of $\cp$. The following assertions hold:
\begin{enumerate}[1)]
\item This map defines a surjection from the class of all recollement-defining sets onto the class of all the TTF triples on $\cd$.
\item If $\cd$ is aisled, then this map induces a surjection from the class of objects $P$ such that $\{P[n]\}_{n\in\Z}^{\bot}$ is closed under small coproducts onto the class of all TTF triples on $\cd$.
\item If $\cd$ is perfectly generated, then this map induces surjections from
\begin{enumerate}[3.1)]
\item the class of perfect objects $P$ such that $\{P[n]\}_{n\in\Z}^{\bot}$ is closed under small coproducts onto the class of all perfectly generated TTF triples.
\item the class of superperfect objects onto the class of superperfectly generated TTF triples.
\item the class of sets of compact objects onto the class of compactly generated TTF triples.
\end{enumerate}
\end{enumerate}
\end{proposition}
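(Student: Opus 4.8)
The plan is to verify the three separate surjectivity claims, each of which follows from combining the characterization lemmas for recollement-defining sets (from the previous subsections) with a routine check that the given triple is a TTF triple. The common backbone is this: given a set $\cp$ of objects, let $\cy=\{P[n]:P\in\cp,n\in\Z\}^{\bot}$. By definition of recollement-defining, $\cp$ is recollement-defining exactly when $\cy$ is simultaneously an aisle and a coaisle in $\cd$, and in that case $(\ ^{\bot}\cy,\cy)$ and $(\cy,\cy^{\bot})$ are t-structures, so $(\ ^{\bot}\cy,\cy,\cy^{\bot})$ is a TTF triple. Thus the map is well defined on recollement-defining sets. For surjectivity in part 1), start from an arbitrary TTF triple $(\cx,\cy,\cz)$ on $\cd$. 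Since $\cd$ has a set of generators and $\cx$ is an aisle with triangulated aisle, assertion 2.2) of the first Lemma gives a set $\cp$ of objects of $\cx$ generating $\cx$ such that $\cy$ is precisely the right-orthogonal to all shifts of $\cp$. Then $\cy$ is both an aisle and a coaisle (it is the middle term of the TTF triple), so $\cp$ is recollement-defining and maps to $(\ ^{\bot}\cy,\cy,\cy^{\bot})=(\cx,\cy,\cz)$, where the last equality uses $\ ^{\bot}\cy=\cx$ (an aisle is the left orthogonal of its coaisle) and $\cy^{\bot}=\cz$.

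For part 2), with $\cd$ aisled, the relevant Lemma in subsection \ref{Recollement-defining sets in aisled categories} says a set $\cp$ is recollement-defining if and only if $\cy$ is closed under small coproducts, and moreover $\Tria(\cp)=\ ^{\bot}\cy$. So it suffices to show that every TTF triple arises from a \emph{single} object $P$ with $\{P[n]\}^{\bot}$ closed under coproducts. Given $(\cx,\cy,\cz)$, take a set $\cp_0$ of generators of $\cx$ with $\cy=\cp_0^{\bot_{\text{shifts}}}$ as above; replacing $\cp_0$ by the single object $P:=\coprod_{Q\in\cp_0}Q$ (which exists since $\cd$ has small coproducts and $\cx$, being an aisle, is closed under them, so $P\in\cx$) does not change the right-orthogonal class, since $\cd(P[n],M)=\prod_Q\cd(Q[n],M)$, so $P$ generates $\cx$ and $\{P[n]\}_{n\in\Z}^{\bot}=\cy$. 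Since $\cy$ is a coaisle it is closed under small coproducts, so $P$ lies in the indicated class and maps to the given triple.

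For part 3), the Lemma in subsection \ref{Recollement-defining sets in perfectly generated triangulated categories} is the tool: when $\cp$ consists of perfect objects, $\cp$ is recollement-defining iff $\cy$ is closed under small coproducts, and then $\Tria(\cp)=\ ^{\bot}\cy$. In each of 3.1), 3.2), 3.3) one takes a TTF triple $(\cx,\cy,\cz)$ whose aisle $\cx$ is, respectively, perfectly / superperfectly / compactly generated, so by definition $\cx$ has a set of perfect (resp.\ superperfect, compact) generators $\cp$; by assertion 2.2) of the first Lemma one may arrange that $\cy$ is exactly the right-orthogonal to the shifts of $\cp$ --- here one should note that $\cp$ can be taken inside $\cx$ and still be a generating set of the required type, because applying $\tau^{\cx}$ or truncation keeps perfect/superperfect/compact objects (Lemma in subsection \ref{(Super)perfectness and compactness}), though in fact a generating set of $\cx$ already lies in $\cx$. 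Since $\cy$ is a coaisle it is closed under coproducts, so $\cp$ meets condition 3) of that Lemma. In cases 3.1) one then replaces $\cp$ by a single perfect object $P=\coprod_{Q\in\cp}Q$ --- here I should check that a coproduct of perfect objects of $\cx$ is still perfect; this holds because perfectness is tested against countable families and coproducts commute, but one must be a little careful, so alternatively one keeps $P$ as the coproduct and invokes that $\cx=\Tria(P)$ with $\cx^{\bot}=\cy$. In cases 3.2) and 3.3) the statement asks only for a set (3.3) or for a single superperfect object (3.2), and the same coproduct trick applies, using that a set of compact objects and a superperfect single object are exactly what the definition of ``compactly/superperfectly generated'' supplies.

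The main obstacle is the bookkeeping around passing between a generating \emph{set} and a single generating \emph{object} while preserving the orthogonality class and the perfectness-type property: one must be sure that $\coprod_{Q\in\cp}Q$ lies in $\cx$ (true, since $\cx$ is an aisle hence closed under coproducts), that its shifted right-orthogonal is unchanged (true by the product formula for $\cd(\coprod Q[n],M)$), and --- in parts 2) and 3.1) --- that perfectness is inherited by the coproduct, which is the one genuinely delicate point and is most cleanly sidestepped by reformulating via $\cx=\Tria(P)$ and the smashing condition $\cx^{\bot}$ closed under coproducts rather than via perfectness of $P$ directly. Everything else is an assembly of the three preceding lemmas together with the elementary facts that an aisle equals the left orthogonal of its coaisle and that the outer terms of a TTF triple are recovered as $\ ^{\bot}\cy$ and $\cy^{\bot}$.
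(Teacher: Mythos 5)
Your overall plan follows the paper's proof closely, but there is a real gap in the surjectivity argument of part 1). You write that, because $\cd$ has a set of generators and $\cx$ is a triangulated aisle, ``assertion 2.2) of the first Lemma \emph{gives} a set $\cp$ of objects of $\cx$ generating $\cx$.'' But 2.2) is only an equivalence --- it tells you that, once you \emph{already have} a generating set $\cq\subseteq\cx$, $\cy$ equals the class of objects right orthogonal to all shifts of $\cq$. It does not produce a generating set, and part 2.1) of that lemma applied to the t-structure $(\cx,\cy)$ gives generators of the \emph{coaisle} $\cy$, not of $\cx$. To get a set of generators of $\cx$ you must use the full TTF structure: apply 2.1) to the t-structure $(\cy,\cz)$ to obtain that $\tau^{\cz}(\cg)$ generates $\cz$, then transport this set along the canonical triangle equivalence $\cz\xrightarrow{\ \tau_{\cx}\,z\ }\cx$ to get the generating set $\tau_{\cx}z\tau^{\cz}(\cg)$ of $\cx$. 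This is exactly the step the paper carries out, and it is the crux of part 1); without it, there is no reason for $\cx$ to admit a set of generators at all.

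Two smaller points. In part 3) you appeal to the lemma of subsection \ref{(Super)perfectness and compactness} (about truncation functors preserving (super)perfectness) to justify that the generators of $\cx$ are perfect/superperfect/compact \emph{in $\cd$}, but the correct reference is the lemma of subsection \ref{Smashing subcategories}, which passes these properties from a smashing subcategory $\cx$ up to the ambient $\cd$; the paper explicitly uses that lemma for 3.3). Finally, your hesitation about coproducts of perfect objects being perfect can be resolved directly: $\cd(\coprod_j P_j,?)=\prod_j\cd(P_j,?)$ and a product of surjections is surjective, so the class of (super)perfect objects is closed under small coproducts, exactly as the paper asserts; the ``sidestep'' you propose is unnecessary. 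Apart from these, your coproduct trick for reducing a recollement-defining set to a single object, and the assembly via the lemmas of subsections \ref{Recollement-defining sets in aisled categories} and \ref{Recollement-defining sets in perfectly generated triangulated categories}, match the paper's argument.
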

\begin{proof}
1) Let $(\cx,\cy,\cz)$ be a TTF triple on $\cd$ and let $\cg$ be a set of generators of $\cd$. Since $\tau^{\cz}(\cg)$ is a set of generators of $\cz$ (\cf Lemma \ref{Generators and infinite devissage}), and the composition $\cz\arr{z}\cd\arr{\tau_{\cx}}\cx$ is a triangle equivalence, then $\tau_{\cx}z\tau^{\cz}(\cg)$ is a set of generators of $\cx$. Now by using Lemma \ref{Generators and infinite devissage} we know that $\cy$ is the set of objects which are right orthogonal to all the shifts of objects of $\tau_{\cx}z\tau^{\cz}(\cg)$, which proves simultaneously that $\tau_{\cx}z\tau^{\cz}(\cg)$ is a recollement-defining set and that the TTF triple comes from a recollement-defining set.

2) We use Lemma \ref{Recollement-defining sets in aisled categories} and the fact that if $\cp$ is a recollement-defining set, then $\{\coprod_{P\in \cp}P\}$ is also a recollement-defining set which is sent to the same TTF triple onto which $\cp$ was sent.

For 3.1) and 3.2) we use the idea of the proof of 2) together with the fact that the class of (super)perfect objects is closed under small coproducts. For 3.3) we use Lemma \ref{Smashing subcategories}.
\end{proof}

Recall that an object $M$ of a triangulated category $\cd$ is called \emph{exceptional} if it has no self-extensions, \ie $\cd(M,M[n])=0$ for each integer $n\neq 0$.

The following corollary generalizes \cite[Theorem 3.3]{Jorgensen2006}, and also \cite[Theorem 2.16]{Heider2007} via \cite[Theorem 2.9]{Heider2007}.

\begin{corollary}
Let $\cd$ be a triangulated category which is either perfectly generated or aisled. The following assertions are equivalent:
\begin{enumerate}[1)]
\item $\cd$ is a recollement of triangulated categories generated by a single compact (and exceptional) object.
\item There are (exceptional) objects $P$ and $Q$ of $\cd$ such that:
\begin{enumerate}[2.1)]
\item $P$ is compact.
\item $Q$ is compact in $\Tria(Q)$.
\item $\cd(P[n],Q)=0$ for each $n\in\Z$.
\item $\{P\ko Q\}$ generates $\cd$.
\end{enumerate}
\item There is a compact (and exceptional) object $P$ such that $\Tria(P)^{\bot}$ is generated by a compact (and exceptional) object in $\Tria(P)^{\bot}$.
\end{enumerate}
In case $\cd$ is compactly generated by a single object the former assertions are e\-qui\-va\-lent to:
\begin{enumerate}[4)]
\item There is a compact (and exceptional) object $P$ (such that $\Tria(P)^{\bot}$ is generated by an exceptional compact object).
\end{enumerate}
In case $\cd$ is algebraic the former assertions are equivalent to:
\begin{enumerate}[5)]
\item $\cd$ is a recollement of derived categories of dg algebras (concentrated in degree $0$, \ie ordinary algebras).
\end{enumerate}
\end{corollary}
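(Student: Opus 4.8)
The plan is to prove the cycle $1)\Rightarrow 2)\Rightarrow 3)\Rightarrow 1)$ and then to deduce the supplementary equivalences, for $\cd$ compactly generated by a single object and for $\cd$ algebraic, by reducing them to this cycle. The ``exceptional'' refinements are carried along in parallel throughout, using the elementary remark that if $P$ is compact in a smashing (or merely full triangulated) subcategory $\cx$ of $\cd$ then the groups $\cx(P,P[n])$, $\cd(P,P[n])$ and $(\cd B)(B,B[n])$ for any presentation $\cx\simeq\cd B$ all coincide, so that exceptionality is an intrinsic condition which passes freely between these categories.

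For $1)\Rightarrow 2)$: a recollement of two triangulated categories each generated by a single compact object yields, via subsection~\ref{TTF triples and recollements}, a TTF triple $(\cx,\cy,\cz)$ on $\cd$ in which $\cx$ and $\cy$ are each triangle-equivalent to one of those two categories; transporting the generators gives an object $P$ compact in $\cx$ and generating it, and an object $Q$ compact in $\cy$ and generating it. Since $\cx$ is smashing (its coaisle $\cy$, being an aisle, is closed under small coproducts), Lemma~\ref{Smashing subcategories} shows $P$ is compact in $\cd$, which gives 2.1); since $\cy=\Tria(Q)$, $Q$ is compact in $\Tria(Q)$, which gives 2.2); since $P\in\cx$ and $Q\in\cx^{\bot}$, $\cd(P[n],Q)=0$ for all $n$, which gives 2.3); and $\{P,Q\}$ generates $\cd$ because $\cd(P[n],M)=0$ for all $n$ forces $M\in\{P[n]\}^{\bot}=\cx^{\bot}=\cy$ and then $\cd(Q[n],M)=0$ for all $n$ forces $M=0$, which gives 2.4). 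For $2)\Rightarrow 3)$: put $\cx:=\Tria(P)$. Because $P$ is compact in $\cd$, the Proposition of subsection~\ref{Smashing subcategories} gives that $P$ is compact in $\cx$ and that $\cx^{\bot}=\Tria(P)^{\bot}$ is closed under small coproducts; by the adjoint functor argument (Lemma~\ref{(Super)perfectness and compactness}) $\cx$ is an aisle, hence smashing, and therefore $(\cx,\cx^{\bot},(\cx^{\bot})^{\bot})$ is a TTF triple---by the Corollary of subsection~\ref{Smashing subcategories} when $\cd$ is perfectly generated, and by Lemma~\ref{Recollement-defining sets in aisled categories} applied to $\cp=\{P\}$ when $\cd$ is aisled. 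By 2.3) and 2.4) the object $Q$ lies in $\cx^{\bot}$ and generates it, and by 2.2) together with the adjoint functor argument $\Tria(Q)$ is an aisle in $\cd$ contained in $\cx^{\bot}$; then Lemma~\ref{Generators and infinite devissage}.1) forces $\cx^{\bot}=\Tria(Q)$, so that $Q$ is a compact object of $\Tria(P)^{\bot}$ generating it, which is 3).

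For $3)\Rightarrow 1)$: with $\cx:=\Tria(P)$, smashing as above, $\cd$ is a recollement of $\cx^{\bot}$ and $\cx$; here $\cx$ is generated by the single compact object $P$ and $\cx^{\bot}=\Tria(P)^{\bot}$ is by hypothesis generated by a single compact object, and both have small coproducts as aisles, so 1) holds. Now suppose $\cd$ is compactly generated by a single object $G$: then $3)\Rightarrow 4)$ is immediate, and for $4)\Rightarrow 3)$, with $\cx:=\Tria(P)$ smashing, $(\cx,\cx^{\bot})$ is a t-structure with triangulated aisle and with $\cx^{\bot}\hookrightarrow\cd$ preserving small coproducts, so $\tau^{\cx^{\bot}}(G)$ is compact in $\cx^{\bot}$ by Lemma~\ref{(Super)perfectness and compactness}.1) and generates $\cx^{\bot}$ by Lemma~\ref{Generators and infinite devissage}.2.1); the exceptional strengthening of 4) supplies directly the exceptional generator needed in 3). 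Finally suppose $\cd$ is algebraic: then $5)\Rightarrow 1)$ is clear since the derived category of a dg algebra is compactly generated by a single object, which is exceptional when the algebra is ordinary; and for $1)\Rightarrow 5)$ the pieces $\cx,\cy$ of the TTF triple are full triangulated subcategories of the algebraic category $\cd$, hence algebraic, with small coproducts and a single compact generator, so B.~Keller's theorem \cite{Keller1994a} presents them as $\cd B_1$ and $\cd B_2$ for dg algebras $B_i$; in the exceptional case $\H{n}(B_i)$ is a group of the form $\cx(P,P[n])$ or $\cy(Q,Q[n])$, hence vanishes for $n\neq 0$, so $B_i$ is quasi-isomorphic through its truncation $\tau_{\leq 0}B_i$ to the ordinary algebra $\H{0}(B_i)$ and $\cd B_i\simeq\cd\H{0}(B_i)$.

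The step that will need the most care is $2)\Rightarrow 3)$: one must check that the smashing property of $\Tria(P)$ genuinely yields a TTF triple in \emph{each} of the two admissible ambient settings, and, above all, that $Q$---a priori known only to be compact inside $\Tria(Q)$---ends up compact in the whole coaisle $\Tria(P)^{\bot}$; this is precisely where the identification $\Tria(P)^{\bot}=\Tria(Q)$ obtained from Lemma~\ref{Generators and infinite devissage}.1) is essential. A secondary point, needed in $1)\Rightarrow 5)$, is the standard fact that a full triangulated subcategory of an algebraic triangulated category is again algebraic, which is what lets Keller's Morita theorem be applied to the pieces $\cx$ and $\cy$.
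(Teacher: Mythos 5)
Your proof is correct and follows essentially the same approach as the paper: the key ideas (that compactness of $P$ makes $\Tria(P)$ smashing, that Lemma~\ref{Generators and infinite devissage} forces $\Tria(P)^{\bot}=\Tria(Q)$, that the TTF triple is obtained via either the perfectly generated or the aisled hypothesis, and that Keller's theorem together with truncation handles $5)$) are the same, with only a minor reorganization from pairwise equivalences to the cycle $1)\Rightarrow2)\Rightarrow3)\Rightarrow1)$ and the cosmetic substitution of the Corollary of subsection~\ref{Smashing subcategories} for Lemma~\ref{Recollement-defining sets in perfectly generated triangulated categories}.
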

\begin{proof}
$1)\Rightarrow 2)$ If $(\cx,\cy,\cz)$ is the TTF triple corresponding to the recollement of 1), we can take $P$ to be a compact generator of $\cx$ and $Q$ to be a compact generator of $\cy$. Of course, conditions 2.2), 2.3) and 2.4) are satisfied. Finally, by Lemma \ref{Smashing subcategories}, $P$ is compact in $\cd$.

$2)\Rightarrow 1)$ Since $P$ is compact then $\Tria(P)$ is a smashing subcategory of $\cd$. Conditions 2.3) and 2.4) say that $Q$ generates $\Tria(P)^{\bot}$. Moreover, $\Tria(Q)$ is contained in $\Tria(P)^{\bot}$ and condition 2.2) ensures that $\Tria(Q)$ is an aisle in $\cd$. Therefore Lemma \ref{Generators and infinite devissage} implies that $\Tria(P)^{\bot}=\Tria(Q)$, and so $Q$ is a compact generator of $\Tria(P)^{\bot}$. Finally, by using either that $\cd$ is perfectly generated (together with Lemma \ref{Recollement-defining sets in perfectly generated triangulated categories}) or that $\cd$ is aisled (together with Lemma \ref{Recollement-defining sets in aisled categories}) we have that $(\Tria(P), \Tria(P)^{\bot},(\Tria(P)^{\bot})^{\bot})$ is a TTF triple.

$2)\Leftrightarrow 3)$ is clear, and $3)\Leftrightarrow 4)$ is also clear thanks to Lemma \ref{Generators and infinite devissage} and Lemma \ref{(Super)perfectness and compactness}.

$1)\Rightarrow 5)$ We use that, by \cite[Theorem 4.3]{Keller1994a}, an algebraic triangulated category compactly generated by a single object is triangle equivalent to the derived category of a dg algebra (\cf subsection \ref{B. Keller's Morita theory for derived categories}).

To deal with the case of exceptional compact objects, one uses that an algebraic triangulated category compactly generated by an exceptional object is triangle equivalent to the derived category of an ordinary algebra (\cf for instance \cite[Theorem 8.3.3]{Keller1998b}).
\end{proof}

Thanks to Corollary \ref{B. Keller's Morita theory for derived categories}, the dg algebras announced in 5) can be chosen to be particularly nice in case $\cd$ is the derived category $\cd A$ of a dg algebra $A$. Indeed, if $P$ and $Q$ are like in 2) and $(\cx,\cy,\cz)=(\Tria(P),\Tria(Q),\Tria(Q)^{\bot})$, then the picture is
\[\xymatrix{\cd B\ar@<1ex>[rr]^{?\otimes^{\bf L}_{B}\textbf{i}Q} && \Tria(Q)\ar@<1ex>[ll]^{\textbf{R}\Hom_{A}(\textbf{i}Q,?)}\ar[r]^{y} & \cd A\ar@/_1pc/[l]_{\tau^{\cy}}\ar@/_-1pc/[l]^{\tau_{\cy}}\ar[r]^{\tau_{\cx}} & \Tria(P)\ar@/_-1pc/[l]^{x}\ar@/_1pc/[l]_{z\tau^{\cz}x}\ar@<1ex>[rr]^{\textbf{R}\Hom_{A}(\textbf{i}P,?)} && \cd C\ar@<1ex>[ll]^{?\otimes^{\bf L}_{C}\textbf{i}P}
}
\]
where $B$ is the dg algebra $(\cc_{dg}A)(\textbf{i}Q,\textbf{i}Q)$ and $C$ is the dg algebra $(\cc_{dg}A)(\textbf{i}P,\textbf{i}P)$.

\begin{example}\cite[Example 8]{Konig1991}
Let $k$ be a field and let $A=k(Q,R)$ be the finite dimensional $k$-algebra associated to the quiver
\[\xymatrix{Q=(1\ar@<1ex>[r]^{\alpha} & 2\ar@<1ex>[l]^{\beta})
}
\]
with relations $R=(\alpha\beta\alpha)$. One easily check that $P:=P_{2}=e_{2}A$ and $Q:=S_{1}=e_{1}A/e_{1}\op{rad}(A)$ are exceptional objects of $\cd A$ and satisfy conditions 2.1)--2.4) of the above corollary. Now, since the dg algebra $(\cc_{dg}A)(\textbf{i}P_{2},\textbf{i}P_{2})$ has cohomology concentrated in degree $0$ and isomorphic, as an algebra, to $C:=\op{End}_{A}(P_{2})$, its derived category is triangle equivalent to $\cd C$. Similarly, the derived category of $(\cc_{dg}A)(\textbf{i}S_{1},\textbf{i}S_{1})$ is triangle equivalent to the derived category of $B:=\op{End}_{A}(S_{1})$. By applying Corollary \ref{Parametrization of TTF triples on triangulated categories}, we know that there exists a recollement
\[\xymatrix{\cd B\ar[r]^{i_{*}} & \cd A\ar@/_1pc/[l]\ar@/_-1pc/[l]\ar[r] & \cd C\ar@/_1pc/[l]\ar@/_-1pc/[l]^{j_{!}}
}
\]
with $i_{*}B=S_{1}$ and $j_{!}C=P_{2}$.
\end{example}

\section{Homological epimorphisms of dg categories}\label{Homological epimorphisms of dg categories}

Let $F:\ca\ra\cb$ be a dg functor between dg categories and suppose the co\-rres\-pon\-ding restriction along $F$
\[F_{*}:\cd\cb\ra\cd\ca
\]
is fully faithful. Let $U$ be the $\ca$-$\cb$-bimodule defined by $U(B,A):=\cb(B,FA)$ and let $V$ be the $\cb$-$\ca$-bimodule defined by $V(A,B):=\cb(FA,B)$. Since the functor $F_{*}$ admits a left adjoint
\[?\otimes_{\ca}^{\bf L}U:\cd\ca\ra\cd\cb
\]
and a right adjoint
\[\textbf{R}\ch om_{\ca}(V,?):\cd\ca\ra\cd\cb,
\]
then the essential image $\cy$ of $F_{*}$ is a full triangulated subcategory which is both an aisle and a coaisle in $\cd\ca$. This shows that there exists a TTF triple on $\cd\ca$ whose central class $\cy$ is triangle equivalent to $\cd\cb$.

In fact, by using Lemma \ref{Generators and infinite devissage} and B.~Keller's Morita theory for derived categories (\cf subsection \ref{B. Keller's Morita theory for derived categories}), it is clear that the central class of any TTF triple on the derived category $\cd\ca$ of a dg category is always triangle equivalent to the derived category $\cd\cb$ of a certain dg category. We will prove in this section that, up to replacing $\ca$ by a quasi-equivalent \cite{Tabuada2005a} dg category, the new dg category $\cb$ can be chosen so as to be linked to $\ca$ by a dg functor $F:\ca\ra\cb$ whose corresponding restriction $F_{*}:\cd\cb\ra\cd\ca$ is fully faithful.

Let us show first a nice characterization of this kind of morphisms. For this, notice that a morphism $F:\ca\ra\cb$ of dg categories also induces a restriction of the form $\cd(\cb^{op}\otimes_{k}\cb)\ra\cd(\ca^{op}\otimes_{k}\ca)$, still denoted by $F_{*}$, and that $F$ can be viewed as a morphism $F:\ca\ra F_{*}\cb$ in $\cd(\ca^{op}\otimes_{k}\ca)$. Let $X\ra\ca\arr{F}F_{*}\cb\ra X[1]$ be the triangle of $\cd(\ca^{op}\otimes_{k}\ca)$ induced by $F$.

\begin{lemma}
Let $F:\ca\ra\cb$ be a dg functor between small dg categories. The following statements are equivalent:
\begin{enumerate}[1)]
\item $F_{*}:\cd\cb\ra\cd\ca$ is fully faithful.
\item The counit $\delta$ of the adjunction $(?\otimes^{\bf L}_{\ca}U,F_{*})$ is an isomorphism.
\item The counit $\delta_{B^{\we}}:F_{*}(B^{\we})\otimes^{\bf L}_{\ca}U\ra B^{\we}$ is an isomorphism for each object $B$ of $\cb$.
\item $F$ satisfies the following:
\begin{enumerate}[4.1)]
\item The modules $(FA)^{\we}\ko A\in\ca$, form a set of compact generators of $\cd\cb$.
\item $X(?,A)\otimes^{\bf L}_{\ca}U=0$ for each object $A$ of $\ca$.
\end{enumerate}
\item $F$ satisfies the following:
\begin{enumerate}[5.1)]
\item The modules $(FA)^{\we}\ko A\in\ca$, form a set of compact generators of $\cd\cb$.
\item The class $\cy$ of modules $M\in\cd\ca$ such that $(\cd\ca)(X(?,A)[n],M)=0$, for each $A\in\ca$ and $n\in\Z$, is closed under small coproducts and $(F_{*}\cb)(?,A)\in\cy$ for each $A\in\ca$.
\end{enumerate}
\end{enumerate}
\end{lemma}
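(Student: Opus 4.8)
The plan is to prove the equivalence of the five statements by establishing a cycle of implications, supplemented by a few direct equivalences where that is cleaner. The natural backbone is $1)\Leftrightarrow 2)\Leftrightarrow 3)$, which is a purely formal adjunction argument, followed by $2)\Rightarrow 4)$ and $4)\Rightarrow 5)$ and $5)\Rightarrow 1)$, with the last step being where the hypotheses of the excerpt (infinite d\'evissage, the adjoint functor argument, and B.~Keller's Morita theory) really get used.

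\medskip

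First I would dispatch $1)\Leftrightarrow 2)$: a fully faithful right adjoint is characterized by the counit of the adjunction $(?\otimes^{\bf L}_\ca U, F_*)$ being an isomorphism; this is standard category theory and needs only that $?\otimes^{\bf L}_\ca U$ is the left adjoint of $F_*$, which is recalled just before the statement. Next, $2)\Rightarrow 3)$ is trivial (specialize $\delta$ to the objects $B^\we$), and $3)\Rightarrow 2)$ follows because the modules $B^\we\ko B\in\cb$ form a set of compact generators of $\cd\cb$ (by Keller's theorem, recalled in subsection \ref{B. Keller's Morita theory for derived categories}), both functors $?\otimes^{\bf L}_\ca U$ and $F_*$ preserve small coproducts and triangles, and the full subcategory of $\cd\cb$ on which $\delta$ is an isomorphism is therefore a triangulated subcategory closed under small coproducts containing the generators, hence all of $\cd\cb$ by infinite d\'evissage (\cf the Remark after Lemma \ref{(Super)perfectness and compactness}).

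\medskip

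For $2)\Rightarrow 4)$: condition 4.1) is just the statement that $F_*$ is fully faithful restricted to the generating set, i.e.\ that $(FA)^\we\ko A\in\ca$ generate $\cd\cb$ and are compact; compactness of $(FA)^\we$ in $\cd\cb$ is automatic (representables are compact), and they generate because $F_*$ being fully faithful identifies $\cd\cb$ with the aisle-and-coaisle $\cy\subseteq\cd\ca$, while $F_*((FA)^\we)\cong (F_*\cb)(?,A)$ and these objects corepresent the relevant Hom-functors. For 4.2), apply $?\otimes^{\bf L}_\ca U$ to the triangle $X(?,A)\ra \ca(?,A)\arr{F}(F_*\cb)(?,A)\ra X(?,A)[1]$: the middle map becomes, after $?\otimes^{\bf L}_\ca U$, the map $A^\we\ra (F_*\cb)(?,A)\otimes^{\bf L}_\ca U$, and by $2)$ (the counit is an isomorphism, applied to $B=FA$, using $F_*(B^\we)=(F_*\cb)(?,A)$ — one should double-check this identification carefully) this map is an isomorphism, forcing $X(?,A)\otimes^{\bf L}_\ca U=0$. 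Conversely $4)\Rightarrow 2)$ runs the same triangle backwards: 4.2) gives that $\delta_{(FA)^\we}$ is an isomorphism for all $A$, and then $4.1)$ together with infinite d\'evissage upgrades this to all of $\cd\cb$, exactly as in $3)\Rightarrow 2)$.

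\medskip

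The equivalence $4)\Leftrightarrow 5)$ is then the main point and the step I expect to be the real obstacle. Both have the same first clause, so one must show 4.2) $\Leftrightarrow$ 5.2) in the presence of 4.1). By the adjunction $(?\otimes^{\bf L}_\ca U, F_*)$, for a module $M\in\cd\ca$ one has $(\cd\ca)(X(?,A)[n],M)\cong(\cd\cb)\big((X(?,A)\otimes^{\bf L}_\ca U)[n], \text{(something)}\big)$ — more precisely one wants to see that the class $\cy$ of 5.2) is exactly $\{X(?,A)[n]\ko A\in\ca, n\in\Z\}^{\bot}$, which is the coaisle attached to $\Tria$ of those objects, and that 4.2) says precisely $X(?,A)\otimes^{\bf L}_\ca U=0$, i.e.\ that $X(?,A)$ lies in the kernel of $?\otimes^{\bf L}_\ca U$, which by adjunction is $^{\bot}(\text{ess.\ image of }F_*)={}^{\bot}\cy$-type reasoning; so $\Tria(\{X(?,A)[n]\})\subseteq {}^{\bot}\cy$ always, and the content is that $\cy$ is then an aisle (equivalently closed under coproducts, since $\cd\ca$ is aisled being a derived category — \cf the Example in subsection \ref{Recollement-defining sets in aisled categories}) and that $(F_*\cb)(?,A)\in\cy$. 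The latter containment is immediate from 4.2) and the defining triangle for $X$. The delicate part is matching ``$X(?,A)\otimes^{\bf L}_\ca U=0$ for all $A$'' with ``$\cy$ closed under small coproducts and $(F_*\cb)(?,A)\in\cy$'': for $4)\Rightarrow 5)$ one uses that $\cd\ca$ is aisled so $\Tria(\{X(?,A)[n]\})$ is an aisle with coaisle $\cy$ (infinite d\'evissage), hence $\cy$ is a coaisle and in particular — here is where one must be careful — one needs $\cy$ also closed under coproducts, which holds because $X(?,A)\otimes^{\bf L}_\ca U=0$ means $X(?,A)\in {}^{\bot}(\text{image }F_*)$ and that image, being the image of a coproduct-preserving fully faithful functor with a coproduct-preserving... — actually the cleanest route is: 4.2) says each $X(?,A)$ is ``$U$-acyclic'', and then $\cy=\{M : \textbf{R}\ch om(X(?,A),M)\text{ has a certain vanishing}\}$; I would instead argue that 5.2) is equivalent to saying the objects $X(?,A)$ are perfect in $\Tria(\{X(?,A)[n]\})$ — no: simplest is to invoke Lemma \ref{Recollement-defining sets in perfectly generated triangulated categories} or Lemma \ref{Recollement-defining sets in aisled categories} directly, since $\cd\ca$ is aisled, to get that ``$\cy$ closed under coproducts'' $\Leftrightarrow$ ``the set $\{X(?,A)\ko A\in\ca\}$ is recollement-defining'' $\Leftrightarrow$ ``$\cy$ is an aisle and a coaisle''; and once $\cy$ is an aisle with coaisle $\cy^\bot$, the essential image of $F_*$ is contained in $\cy$ (by 4.2) and the triangle) and both are coaisles-of-the-same-aisle reasoning forces equality, recovering 2). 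For $5)\Rightarrow 4)$: 5.2) says $\cy$ is recollement-defining (via Lemma \ref{Recollement-defining sets in aisled categories}, $\cd\ca$ being aisled), so $^{\bot}\cy=\Tria(\{X(?,A)\ko A\in\ca\})$; now $(F_*\cb)(?,A)\in\cy$ together with 5.1) forces the essential image of $F_*$ to coincide with $\cy$ (it sits inside $\cy$, is a coaisle-candidate, and is generated by objects corepresenting the same functors), whence $F_*$ fully faithful, i.e.\ 1), and then 4.2) follows from the already-proven $1)\Rightarrow 4)$. The one subtlety to watch throughout is the precise identification $F_*(B^\we)\cong (F_*\cb)(?, \cdot)$ and the compatibility of the triangle defining $X$ with the functor $?\otimes^{\bf L}_\ca U$ — getting those bookkeeping identifications exactly right is where the proof could go wrong, but each is a routine unwinding of the bimodule definitions of $U$, $V$ and $X$.
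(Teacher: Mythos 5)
Your handling of the formal part $1)\Leftrightarrow 2)\Leftrightarrow 3)\Leftrightarrow 4)$ is essentially the paper's route (modulo some sloppiness in the triangle bookkeeping for $2)\Rightarrow 4)$, where the relevant map $(FA)^{\we}\ra F_{*}((FA)^{\we})\otimes^{\bf L}_{\ca}U$ is \emph{not} the counit and needs a small extra step with the triangle identities, which the paper spells out). The genuine problem is the equivalence $4)\Leftrightarrow 5)$, where your argument is exploratory and never actually closes.

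For $4)\Rightarrow 5)$: you repeatedly hit, and correctly identify, the obstruction that knowing $\Tria(\{X(?,A)\})$ is an aisle in the aisled category $\cd\ca$ with coaisle $\cy$ does \emph{not} tell you that $\cy$ is closed under small coproducts — that is precisely the smashing condition and has to be proved, not read off. You trail off mid-sentence several times precisely at this point. What you are missing is that the paper does \emph{not} try to prove $4)\Rightarrow 5)$ in isolation; it proves ``$4)$ and $1)\Rightarrow 5)$'', which is legitimate once the cycle through $4)$ is closed. With $1)$ in hand, the essential image of $F_{*}$ is known a priori to be an aisle-and-coaisle $\cy'$ (the opening remarks of Section~\ref{Homological epimorphisms of dg categories}), so it is closed under coproducts, and the work reduces to showing $\cy'$ coincides with the $\cy$ of $5.2)$, i.e.\ that $\Tria(\{X(?,A)\})$ is exactly ${}^\bot\cy'$. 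The paper does this by identifying $\tau^{\cy'}$ with $?\otimes^{\bf L}_{\ca}U$ (via full faithfulness of $F_{*}$), deducing from $4.2)$ that the $X(?,A)$ lie in ${}^\bot\cy'$, and then using the bimodule triangle and d\'evissage to identify $?\otimes^{\bf L}_{\ca}X$ with $\tau_{{}^\bot\cy'}$. None of this appears in your proposal.

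For $5)\Rightarrow 4)$: your plan is to first extract $1)$ from $5)$ and then invoke $1)\Rightarrow 4)$. The step ``the essential image of $F_{*}$ coincides with $\cy$ \dots whence $F_{*}$ fully faithful'' is not a valid inference: a functor whose essential image is a nice subcategory need not be fully faithful, and you have given no argument for full faithfulness. The paper avoids this entirely and proves $4.2)$ directly: to show $X(?,A)\otimes^{\bf L}_{\ca}U=0$ it suffices (by adjunction, applied to $\Hom$ of this object with itself) to show $F_{*}(X(?,A)\otimes^{\bf L}_{\ca}U)\in\cy$, and this follows from the stronger statement that $F_{*}(M\otimes^{\bf L}_{\ca}U)\in\cy$ for \emph{every} $M\in\cd\ca$, proved by infinite d\'evissage on $M$ using the three ingredients supplied by $5.2)$: $\cy$ closed under small coproducts, $F_{*}$ and $?\otimes^{\bf L}_{\ca}U$ preserve small coproducts, and $F_{*}(A^{\we}\otimes^{\bf L}_{\ca}U)=(F_{*}\cb)(?,A)\in\cy$. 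This is a self-contained d\'evissage argument that never mentions full faithfulness, and is the key missing idea in your write-up.
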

\begin{proof}
The equivalence $1)\Leftrightarrow 2)$ is a general fact about adjoint functors, and implication $2)\Rightarrow 3)$ is clear.

$3)\Rightarrow 2)$ follows from the implications $3)\Rightarrow 4)\Rightarrow 2)$ below. However, there is a shorter and more natural proof: one can use that $\cd\cb$ satisfies the principle of infinite d\'{e}vissage with respect to the $B^{\we}\ko B\in\cb$, and that the modules $M$ with invertible $\delta_{M}$ form a strictly full triangulated subcategory of $\cd\cb$ closed under small coproducts and containing the $B^{\we}\ko B\in\cb$.

$3)\Rightarrow 4)$ It is easy to show that if $F_{*}$ is fully faithful, then the objects $(FA)^{\we}\ko A\in\ca$ form a set of (compact) generators. Now, for an object $A\in\ca$ we get the triangle in $\cd\ca$
\[X(?,A)\ra A^{\we}\arr{F}F_{*}((FA)^{\we})\ra X(?,A)[1].
\]
If we apply $?\otimes^{\bf L}_{\ca}U$ then we get the triangle
\[X(?,A)\otimes^{\bf L}_{\ca}U\ra (FA)^{\we}\ra F_{*}((FA)^{\we})\otimes^{\bf L}_{\ca}U\ra X(?,A)\otimes^{\bf L}_{\ca}U[1]
\]
of $\cd\cb$, which gives the triangle
\[F_{*}(X(?,A)\otimes^{\bf L}_{\ca}U)\ra F_{*}((FA)^{\we})\ra F_{*}(F_{*}((FA)^{\we})\otimes^{\bf L}_{\ca}U)\ra F_{*}(X(?,A)\otimes^{\bf L}_{\ca}U)[1]
\]
of $\cd\ca$. The morphism $F_{*}((FA)^{\we})\ra F_{*}(F_{*}((FA)^{\we})\otimes^{\bf L}_{\ca}U)$ is induced by the unit of the adjunction $(?\otimes^{\bf L}_{\ca}U,F_{*})$, it is the right inverse of $F_{*}(\delta_{(FA)^{\we}})$, and it is an isomorphism if and only if $F_{*}(X(?,A)\otimes^{\bf L}_{\ca}U)=0$. Since $F_{*}$ is fully faithful, then it reflects both isomorphism and zero objects. Hence, we have that $\delta_{(FA)^{\we}}$ is an isomorphism for every $A\in\ca$ if and only if condition 4.2) holds.

$4)\Rightarrow 2)$ Condition 4.1) implies that $F_{*}$ reflects both zero objects and isomorphisms. By using the same argument as in $3)\Rightarrow 4)$, one proves that condition 4.2) implies that $\delta_{(FA)^{\we}}$ is an isomorphism for each $A\in\ca$. Then condition 4.1) guarantees that we can use infinite d\'{e}vissage to prove that 2) holds.

$4), 1)\Rightarrow 5)$ We know that the essential image $\cy$ of $F_{*}$ is the middle class of a TTF triple $(\cx,\cy,\cz)$ on $\cd\ca$. Notice that $?\otimes^{\bf L}_{\ca}U$ is left adjoint to $F_{*}$ and so it `is' the truncation functor $\tau^{\cy}$ associated to $\cy$ regarded as a coaisle. Since $X(?,A)\otimes^{\bf L}_{\ca}U=0$ for each $A\in\ca$, then $\Tria(\{X(?,A)\}_{A\in\ca})\subseteq\cx$. Since $A^{\we}\otimes^{\bf L}_{\ca}X=X(?,A)$ is in $\cx$, by infinite d\'{e}vissage we have that the essential image of $?\otimes^{\bf L}_{\ca}X$ is in $\cx$. Hence, the triangle
\[X\ra\ca\arr{F}F_{*}\cb\ra X[1]
\]
of $\cd(\ca^{op}\otimes_{k}\ca)$ induces for each $M\in\cd\ca$ a triangle
\[M\otimes^{\bf L}_{\ca}X\ra M\ra F_{*}(M\otimes^{\bf L}_{\ca}U)\ra (M\otimes^{\bf L}_{\ca}X)[1]
\]
of $\cd\ca$ with $M\otimes^{\bf L}_{\ca}X\in\cx$ and $F_{*}(M\otimes^{\bf L}_{\ca}U)\in\cy$. This proves that $\tau_{\cx}(?)=?\otimes^{\bf L}_{\ca}X$, and thus $\Tria(\{X(?,A)\}_{A\in\ca})=\cx$.

$5)\Rightarrow 4)$ We want to prove
\[(\cd\cb)(X(?,A)\otimes^{\bf L}_{\ca}U,X(?,A)\otimes^{\bf L}_{\ca}U)=0\ko
\]
for each $A\in\ca$, that is to say
\[(\cd\ca)(X(?,A),F_{*}(X(?,A)\otimes^{\bf L}_{\ca}U))=0
\]
for each $A\in\ca$ or, equivalently, $F_{*}(X(?,A)\otimes^{\bf L}_{\ca}U)\in\cy$ for each $A\in\ca$. But in fact, $F_{*}(M\otimes^{\bf L}_{\ca}U)\in\cy$ for every $M\in\cd\ca$, as can be proved by infinite d\'{e}vissage since $\cy$ is closed under small coproducts, $F_{*}$ and $?\otimes^{\bf L}_{\ca}U$ preserve small coproducts and $F_{*}(A^{\we}\otimes^{\bf L}_{\ca}U)=F_{*}\cb(?,A)\in\cy$ for each $A\in\ca$.
\end{proof}

A dg functor $F:\ca\ra\cb$ is a \emph{homological epimorphism} if it satisfies the conditions of the above lemma. From the proof of this lemma it is clear that the recollement associated to the TTF induced by $F$ is of the form
\[\xymatrix{\cd\cb\ar[rr]^{F_{*}} && \cd\ca\ar@/_1pc/[ll]_{?\otimes^{\bf L}_{\ca}U}\ar@/_-1pc/[ll]^{\textbf{R}\ch om_{\ca}(V,?)}\ar[rr]^{\tau_{\cx}} && \cx\ar@/_1pc/[ll]\ar@/_-1pc/[ll]^x
}
\]
where $x$ is the inclusion functor, $\tau_{\cx}(?)=?\otimes^{\bf L}_{\ca}X$ and $\cx=\Tria(\{X(?,A)\}_{A\in\ca})$.

\begin{remark}
Our notion of ``homological epimorphism of dg categories'' is a ge\-ne\-ra\-li\-za\-tion of the notion of ``homological epimorphism of algebras'' due to W. Geigle and H. Lenzing \cite{GeigleLenzing}. Indeed, a morphism of algebras $f:A\ra B$ is a \emph{homological epimorphism} if it satisfies:
\begin{enumerate}[1)]
\item the multiplication $B\otimes_{A}B\ra B$ is bijective,
\item $\op{Tor}^{A}_{i}(B_{A},\ _{A}B)=0$ for every $i\geq 1$.
\end{enumerate}
But this is equivalent to require that the `multiplication' $B\otimes_{A}^{\bf L}B\ra B$ is an isomorphism in $\cd B$, which is precisely condition 3) of the above lemma.
Hence, our lemma recovers and adds some handy characterizations of homological epimorphisms of algebras. Recently, D. Pauksztello \cite{Pauksztello2007} has studied homological epimorphism of dg algebras.
\end{remark}

The following are particular cases of homological epimorphisms:

\begin{example}
Let $I$ be a two-sided ideal of an algebra $A$. The following statements are equivalent:
\begin{enumerate}[1)]
\item The canonical projection $A\ra A/I$ is a homological epimorphism.
\item $\op{Tor}^{A}_{i}(I, A/I)=0$ for every $i\geq 0$.
\item The class of complexes $Y\in\cd A$ such that $(\cd A)(I[n],Y)=0$ for every $n\in\Z$ is closed under small coproducts and $\Ext^{i}_{A}(I,A/I)=0$ for every $i\geq 0$.
\item The class of complexes $Y\in\cd A$ such that $(\cd A)(I[n],Y)=0$ for every $n\in\Z$ is closed under small coproducts and $\Ext^{i}_{A}(A/I,A/I)=0$ for every $i\geq 1$.
\end{enumerate}
The first part of conditions 3) and 4) are always satisfied if $I_{A}$ is compact in $\cd A$, \ie quasi-isomorphic to a bounded complex of finitely generated projective $A$-modules. Note that condition 2) is precisely condition 4) of the above lemma. Also, notice that from $\op{Tor}^{A}_{0}(I,A/I)=0$ follows that $I$ is idempotent. Conversely, if $I$ is idempotent and projective as a right $A$-module (\eg $I=A(1-e)A$ where $e\in A$ is an idempotent such that $eA(1-e)=0$), then condition 2) is clearly satisfied.
\end{example}

The above example contains the unbounded versions of the recollements of Co\-rollary 11, Corollary 12 and Corollary 15 of \cite{Konig1991}. In a forthcoming paper we will prove that all of them restrict to give the recollements of right bounded derived categories of \cite{Konig1991}.

\begin{example}
Let $j:A\ra B$ be an injective morphism of algebras, which we view as an inclusion. The following statements are equivalent:
\begin{enumerate}[1)]
\item $j$ is a homological epimorphism.
\item $\op{Tor}^{A}_{i}(B/A,B)=0$ for every $i\geq 0$.
\item The class of complexes $Y\in\cd A$ such that $(\cd A)((B/A)[n],Y)=0$ for every $n\in\Z$ is closed under small coproducts and $\Ext^i_{A}(B/A,B)=0$ for every $i\geq 0$.
\end{enumerate}
\end{example}

Recall that a dg category $\ca$ is \emph{k-flat} if the functor $?\otimes_{k}\ca(A,A'):\cc k\ra\cc k$ preserves acyclic complexes of $k$-modules for every $A, A'\in\ca$. Of course, this is always the case if $k$ is a field.

\begin{theorem}
Let $\ca$ be a $k$-flat dg category. For every TTF triple $(\cx,\cy,\cz)$ on $\cd\ca$ there exists a homological epimorphism $F:\ca\ra\cb$, bijective on objects, such that the essential image of the restriction of scalars functor $F_{*}:\cd\cb\ra\cd\ca$ is $\cy$.
\end{theorem}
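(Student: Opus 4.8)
The plan is to realize the central class $\cy$ as the derived category of a dg category $\cb$ with the same objects as $\ca$, built out of fibrant models of the truncations $\tau^{\cy}(A^{\we})$, to exhibit an honest dg functor $F:\ca\ra\cb$ realizing $A^{\we}\mapsto\tau^{\cy}(A^{\we})$, and to conclude from the Lemma characterizing homological epimorphisms that $F$ is one and that the essential image of $F_{*}$ is $\cy$. First I would pin down generators. The modules $A^{\we}\ko A\in\ca$, are compact generators of $\cd\ca$; moreover $\cx$ is triangulated and $\cy$, being the coaisle of $(\cx,\cy)$ and an aisle of $(\cy,\cz)$, is closed under small coproducts. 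Hence, by part 1) of Lemma \ref{(Super)perfectness and compactness}, the objects $P_{A}:=\tau^{\cy}(A^{\we})$ are compact in $\cy$, and by part 2.1) of Lemma \ref{Generators and infinite devissage} they generate $\cy$; by infinite d\'{e}vissage $\cy=\Tria(\{P_{A}\}_{A\in\ca})$. Likewise the set $\cs:=\{\tau_{\cx}z\tau^{\cz}(A^{\we})\}_{A\in\ca}$ generates the aisle $\cx$ (\cf the proof of part 1) of Proposition \ref{Parametrization of TTF triples on triangulated categories}), so $\cx=\Tria(\cs)$ and $\cy$ is exactly the class of objects right orthogonal to all shifts of the objects of $\cs$.

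Next I would construct $\cb$ and $F$. In the category $\cc\ca$ of dg modules with its projective model structure, the fact that $\cx=\Tria(\cs)$ is generated by the set $\cs$ allows one to form the left Bousfield localization at the shifts of the objects of $\cs$; its local ($=$ fibrant) objects are precisely the dg modules whose image in $\cd\ca$ lies in $\cy$, and, $\cc\ca$ being a $k$-linear combinatorial model category, this localization carries a dg-functorial local fibrant replacement functor $L$. As each $A^{\we}$ is cofibrant, we may take $A^{\we}\ra LA^{\we}$ to be a cofibration with cofibrant--fibrant target, so that $LA^{\we}$ is cofibrant and $LA^{\we}\cong\tau^{\cy}(A^{\we})=P_{A}$ in $\cd\ca$. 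Let $\cb$ be the dg category with $\obj(\cb)=\obj(\ca)$ and $\cb(A,A'):=(\cc_{dg}\ca)(LA^{\we},LA'^{\we})$, and let $F:\ca\ra\cb$ be the identity on objects and, on morphism complexes, the effect of $L$ on the (dg) Yoneda embedding $\ca\ra\cc_{dg}\ca\ko A\mapsto A^{\we}$; then $F$ is bijective on objects. Since the $LA^{\we}$ are cofibrant, $\cb(A,A')$ computes $\textbf{R}\ch om_{\ca}(LA^{\we},LA'^{\we})$, so $\cb$ is a dg enhancement of the full subcategory of $\cy$ on the compact generators $P_{A}$; by B.~Keller's Morita theory (Corollary \ref{B. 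Keller's Morita theory for derived categories} applied to $\cp=\{P_{A}\}$) the functor $?\otimes^{\textbf{L}}_{\cb}X$, with $X$ the $\cb$-$\ca$-bimodule of that corollary, is then a triangle equivalence $\cd\cb\arr{\sim}\cy$ carrying each representable $(FA)^{\we}$ to $P_{A}$.

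It remains to check that $F$ is a homological epimorphism with essential image of $F_{*}$ equal to $\cy$. Restriction of scalars sends $(FA)^{\we}$ to the $\ca$-module $A'\mapsto\cb(FA',FA)=(\cc_{dg}\ca)(LA'^{\we},LA^{\we})$; precomposition with the maps $A'^{\we}\ra LA'^{\we}$ is a morphism of $\ca$-modules from it onto $LA^{\we}$, and it is a quasi-isomorphism because $A'^{\we}\ra LA'^{\we}$ is a local equivalence of cofibrant modules and $LA^{\we}$ is local. Hence $F_{*}((FA)^{\we})\cong P_{A}\in\cy$. On representables $F_{*}$ induces the identifications $(\cd\cb)((FA)^{\we},(FA')^{\we}[n])=\H n\cb(FA,FA')=(\cd\ca)(LA^{\we},LA'^{\we}[n])=(\cd\ca)(F_{*}(FA)^{\we},F_{*}(FA')^{\we}[n])$, and this full faithfulness spreads to all of $\cd\cb$ by infinite d\'{e}vissage, using that the $P_{A}$ are compact in $\cy$ and that $\cy$ is closed under small coproducts in $\cd\ca$ (so $\cd\ca(P_{A},-)$ commutes with the relevant coproducts). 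Thus $F_{*}$ is fully faithful, i.e. $F$ is a homological epimorphism, and its essential image is the triangulated subcategory of $\cd\ca$ generated under small coproducts by the $F_{*}((FA)^{\we})=P_{A}$, which is $\Tria(\{P_{A}\})=\cy$.

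The $k$-flatness of $\ca$ enters to guarantee that the tensor and $\ch om$ constructions over $\ca$ --- in particular the triangle $X\ra\ca\arr{F}F_{*}\cb\ra X[1]$ of $\cd(\ca^{op}\otimes_{k}\ca)$ appearing in the characterization Lemma --- are computed with no further resolution, so that the module-level identifications above are legitimate; in the non-flat case one must first pass to a quasi-equivalent (e.g.\ cofibrant) dg category. I expect the main obstacle to be precisely the construction of $F$ as a genuine dg functor realizing $A^{\we}\mapsto\tau^{\cy}(A^{\we})$, rather than a roof or a functor defined only up to homotopy: this is what forces the use of the dg-functorial local fibrant replacement. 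Once $\cb$ and $F$ are in hand, the rest is a formal consequence of Corollary \ref{B. Keller's Morita theory for derived categories} and the Lemma characterizing homological epimorphisms.
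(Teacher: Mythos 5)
Your overall plan is genuinely different from the paper's: you try to obtain $\cb$ and $F$ from a dg-\emph{functorial} fibrant replacement $L$ in a Bousfield localization of $\cc\ca$, while the paper constructs $\cb$ by resolving a single $\cc$-$\ca$-\emph{bimodule} (built from the $\cy$-truncations $Y_{A}$) in $\ch(\cc^{op}\otimes_{k}\ca)$ and then extracting the dg functor from the bimodule structure. That difference is not cosmetic: the bimodule resolution is precisely the device the paper uses to encode the coherences that your approach would need to get for free from $L$.

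The crux of your argument --- and the place where I see a genuine gap --- is the sentence ``$\cc\ca$ being a $k$-linear combinatorial model category, this localization carries a dg-functorial local fibrant replacement functor $L$.'' For your construction of $\cb$ and $F$ to make sense you need $L$ to be an honest dg functor $\cc_{dg}\ca\ra\cc_{dg}\ca$ (i.e.\ enriched over $\cc k$), since you define $F$ on morphism \emph{complexes} by applying $L$. The small object argument produces a functorial fibrant replacement on the underlying $1$-category $\Zy 0\cc_{dg}\ca$, acting only on degree-zero cycles of hom-complexes; an \emph{enriched} functorial factorization is a substantially stronger statement and does not follow merely from $\cc\ca$ being combinatorial and $k$-linear, nor from the localized model structure being a $\cc k$-model structure (the enrichment axioms concern the pushout-product property, not the functoriality of the factorizations). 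You flag this as ``the main obstacle'', but then simply assert it; it is exactly the difficulty that makes the theorem nontrivial, and the paper's bimodule argument is designed to avoid having to prove it. If you want to keep your route, you need to supply a reference or an argument for a dg-functorial fibrant replacement in the relevant Bousfield localization.

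Two smaller points. First, your account of where $k$-flatness is used (``to guarantee that the tensor and $\ch om$ constructions $\ldots$ are computed with no further resolution'') does not match the role it plays in the paper: there, $k$-flatness is used to ensure that restricting the $\ch$-injective bimodule resolution $Y'$ from $\cc$-$\ca$-bimodules to left dg $\cc$-modules still gives $\ch$-injective modules (because the left adjoint $?\otimes_{k}\ca$ preserves acyclics), and it is unclear whether your construction needs $k$-flatness at all or, if it does, where. Second, your first and third paragraphs (identifying $\cy=\Tria(\{P_{A}\})$ with compact generators, and deducing from full faithfulness on representables, by infinite d\'{e}vissage, that $F_{*}$ is fully faithful with essential image $\cy$) are correct and track the paper's conclusions, so the skeleton is sound once the existence of a dg-functorial $L$ is secured.
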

\begin{proof}
For each $A\in\ca$ we consider a fixed triangle
\[X_{A}\ra A^{\we}\arr{\varphi_{A}} Y_{A}\ra X[1]
\]
in $\cd\ca$ with $X_{A}\in\cx$ and $Y_{A}\in\cy$. Assume that each $Y_{A}$ is $\ch$-injective. Let $\cc$ be the dg category given by the full subcategory of $\cc_{dg}\ca$ formed by the objects $Y_{A}\ko A\in\ca$. Clearly, these objects define a $\cc$-$\ca$--bimodule $Y$ as follows:
\[(\cc^{op}\otimes_{k}\ca)^{op}\ra\cc_{dg}k\ko (Y_{A'},A)\mapsto Y(A,Y_{A'}):=Y_{A'}(A),
\]
and the morphisms $\varphi_{A}$ induce a morphism of right dg $\ca$-modules
\[\varphi_{A}: A^{\we}\ra Y(?,Y_{A}).
\]
Let $\xi:Y\ra Y'$ be an $\ch$-injective resolution of $Y$ in $\ch(\cc^{op}\otimes_{k}\ca)$, and let $\cb'$ be the dg category given by the full subcategory of $\cc_{dg}(\cc^{op})$ formed by the objects $Y'(A,?), A\in\ca$. Consider the functor
\[\rho:\ca^{op}\ra\cb',
\]
which takes the object $A$ to $Y'(A,?)$ and the morphism $f\in\ca^{op}(A,A')$ to the morphism $\rho(f)$ defined by
\[\rho(f)(C):Y'(A,C)\ra Y'(A',C)\ko y\mapsto (-1)^{|f||y|}Y'(f,C)(y)
\]
for a homogeneous $f$ of degree $|f|$ and a homogeneous $y$ of degree $|y|$. Since $Y'$ is a $\cc$-$\ca$--bimodule, then the functor $\rho$ is a morphism of dg categories. It induces a morphism between the corresponding opposite dg categories
\[F:\ca\ra\cb'^{op}=:\cb.
\]
Notice that, for each $A'\in\ca$, the functor $(\cc_{dg}\ca)(?,Y_{A'}):\cc\ca\ra\cc k$ induces a triangle functor between the corresponding categories up to homotopy, $(\cc_{dg}\ca)(?,Y_{A'}):\ch\ca\ra\ch k$. Moreover, since $Y_{A'}$ is $\ch$-injective, then this functor induces a triangle functor between the corresponding derived categories
\[(\cc_{dg}\ca)(?,Y_{A'}):\cd\ca\ra\cd k.
\]
When all these functors are applied to the triangles considered above, then we get a family of quasi-isomorphism of complexes of $k$-modules
\[\Psi_{A,A'}:\cc(Y_{A},Y_{A'})\ra Y(A,Y_{A'}),
\]
for each $A, A'\in\ca$. This family underlies a quasi-isomorphism of left dg $\cc$-modules $\Psi_{A,?}:\cc(Y_{A},?)\ra Y(A,?)$ for each $A\in\ca$. Hence, we have a family of quasi-isomorphisms of left dg $\cc$-modules
\[\xi_{A,?}\Psi_{A,?}:\cc(Y_{A},?)\ra Y'(A,?),
\]
for $A\in\ca$. Notice that, since $\ca$ is $k$-flat, for each $A'\in\ca$ the corresponding restriction from $\cc$-$\ca$-bimodules to left dg $\cc$-modules preserves $\ch$-injectives. Indeed, its left adjoint is $?\otimes_{k}\ca$ and preserves acyclic modules. Then for each $A'\in\ca$ the triangle functor
\[\cc_{dg}(\cc^{op})(?,Y'(A',?)):\ch(\cc^{op})\ra\ch k
\]
preserves acyclic modules, and so it induces a triangle functor
\[\cc_{dg}(\cc^{op})(?,Y'(A',?)):\cd(\cc^{op})\ra\cd k.
\]
When applied to the quasi-isomorphisms of left dg $\cc$-modules $\xi_{A,?}\Psi_{A,?}$, for $A\in\ca$, it gives us quasi-isomorphisms of complexes
\[\cb'(Y'(A,?), Y'(A',?))\ra \cc_{dg}(\cc^{op})((Y_{A})^{\we},Y'(A',?)),
\]
and so quasi-isomorphisms of complexes
\[\varepsilon_{}: \cb'(Y'(A,?),Y'(A',?))\ra Y'(A',Y_{A}),
\]
for each $A,A'\in\ca$. It induces a quasi-isomorphism of right dg $\ca$-modules
\[\varepsilon: F_{*}\cb(?,Y'(A,?))\ra Y'(?,Y_{A})
\]
and we get an isomorphism of triangles
\[\xymatrix{X_{A}\ar[r] & A^{\we}\ar[rr]^{\xi_{?,Y_{A}}\varphi_{A}} && Y'(?,Y_{A})\ar[r] & X[1] \\
X_{A}\ar[r]\ar[u]^{\id} & A^{\we}\ar[rr]^{F}\ar[u]^{\id} && F_{*}\cb(?,Y'(A,?))\ar[u]^{\varepsilon}\ar[r] & X[1]\ar[u]^{\id}
}
\]
Notice that from this it follows that $\tau^{\cy}(A^{\we})=F_{*}\cb(?,Y'(A,?))$ for each $A\in\ca$.
By infinite d\'{e}vissage of $\cd\cb$ with respect to the set of modules $B^{\we}\ko  B\in\cb$, the functor $F_{*}:\cd\cb\ra\cd\ca$ has its image in $\cy$. We can assume $X_{A}$ to be the restriction $X(?,A)$ of an $\ca$-$\ca$--bimodule $X$ coming from the triangle induced by $F$ in $\cd(\ca^{op}\otimes_{k}\ca)$. Now, by using the adjunction $(?\otimes^{\bf L}_{\ca}U, F_{*})$ one proves that the functor $?\otimes^{\bf L}_{\ca}U:\cd\ca\ra\cd\cb$ vanishes on the objects of $\cx$, where $U$ is the $\ca$-$\cb$--bimodule defined by $U(B,A):=\cb(B,FA)$. In particular, it vanishes on $X(?,A)$ and by Lemma \ref{Homological epimorphisms of dg categories} we have proved that $F_{*}$ is a homological epimorphism.

Thanks to Lemma \ref{(Super)perfectness and compactness} we know that the modules $F_{*}(B^{\we})\ko B\in\cb$, form a set of compact generators of $\cy$, and so, since $F_{*}$ is fully faithful, the essential image of $F_{*}$ is precisely $\cy$.
\end{proof}

\begin{remark}
The above Theorem is related to subsection 5.4 of K. Br\"{u}ning's Ph. D. thesis \cite{Bruning2007} and section 9 of B. Huber's Ph. D. thesis \cite{Huber2007}. In particular, it generalizes \cite[Corollary 5.4.9]{Bruning2007} and \cite[Corollary 9.8]{Huber2007}. It is also related to B. To\"{e}n's lifting, in the homotopy category of small dg categories up to quasi-equivalences, of \emph{quasi-functors} or \emph{quasi-representable} dg bimodules to genuine dg functors (\cf the proof of \cite[Lemma 4.3]{Toen2005}). \emph{Cf.} also \cite[Lemma 3.2]{Keller1999}.
\end{remark}

Notice that, if $F:\ca\ra\cb$ is a homological epimorphism of dg categories, the bimodule $X$ in the triangle
\[X\ra\ca\arr{F}F_{*}\cb\ra X[1]
\]
of $\cd(\ca^{op}\otimes_{k}\ca)$ has a certain `derived idempotency' expressed by condition 4) of Lemma \ref{Homological epimorphisms of dg categories}, and plays the same r\^ole as the idempotent two-sided ideal in the theory of TTF triples on module categories \cite{Jans, Stenstrom}.

\section{Parametrization for derived categories}\label{Parametrization for derived categories}

Two homological epimorphisms of dg categories, $F:\ca\ra\cb$ and $F':\ca\ra\cb'$, are \emph{equivalent} if the essential images of the corresponding restriction functors, $F_{*}:\cd\cb\ra\cd\ca$ and $F'_{*}:\cd\cb'\ra\cd\ca$, are the same subcategory of $\cd\ca$.

Thanks to Theorem \ref{Homological epimorphisms of dg categories}, we know that every homological epimorphism $F$ starting in a $k$-flat dg category is equivalent to another one $F'$ which is bijective on objects. Unfortunately, the path from $F$ to $F'$ is indirect. Nevertheless, there exists a direct way  of proving (without any flatness assumption) that every homological epimorphism is equivalent to a `quasi-surjective' one.

\begin{proposition}
Every homological epimorphism $F:\ca\ra\cb$ is equivalent to a homological epimorphism $F':\ca\ra\cb'$ such that $\H 0F':\H 0\ca\ra\H 0\cb'$ is essentially surjective.
\end{proposition}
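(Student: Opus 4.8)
The plan is to factor $F$ through its "image" dg category and show this image witnesses the equivalence. Concretely, given a homological epimorphism $F:\ca\ra\cb$, I would let $\cb'$ be the full dg subcategory of $\cb$ whose objects are $FA\ko A\in\ca$, and let $F':\ca\ra\cb'$ be the evident corestriction of $F$. Then $F$ factors as $\ca\arr{F'}\cb'\arr{\iota}\cb$, with $\iota$ the inclusion. By construction $\H 0F':\H 0\ca\ra\H 0\cb'$ is essentially surjective (in fact surjective on objects). So the only thing to check is that $F'$ is again a homological epimorphism and that it is equivalent to $F$, \ie that the essential image of $F'_{*}:\cd\cb'\ra\cd\ca$ coincides with that of $F_{*}:\cd\cb\ra\cd\ca$.

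First I would observe that $\iota:\cb'\hookrightarrow\cb$ is a \emph{quasi-equivalence}: it is the identity on Hom-complexes between objects of $\cb'$, and every object of $\cb$ is of the form $FA$, hence lies in the image of $\iota$ up to nothing at all (it is literally in $\cb'$) — so $\iota$ is even bijective on objects and fully faithful as a dg functor. Wait: objects of $\cb$ are exactly the $FA$? Not necessarily — $\cb$ may have more objects. But that is fine: $\iota$ is fully faithful and, by condition 4.1) of Lemma \ref{Homological epimorphisms of dg categories} applied to $F$, the modules $(FA)^{\we}\ko A\in\ca$ form a set of \emph{compact generators} of $\cd\cb$; these are precisely the representable $\cb'$-modules transported along $\iota$. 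Hence the restriction $\iota_{*}:\cd\cb\ra\cd\cb'$ is a triangle equivalence: it sends the compact generators $(FA)^{\we}$ of $\cd\cb$ to the representable generators of $\cd\cb'$, it commutes with coproducts, and one applies \cite[Lemma 4.2]{Keller1994a} (the infinite dévissage criterion, as in the proof of Corollary \ref{B. Keller's Morita theory for derived categories}). Therefore $F_{*}=F'_{*}\circ\iota_{*}$ with $\iota_{*}$ an equivalence, so $F_{*}$ and $F'_{*}$ have the same essential image; in particular $F'_{*}$ is fully faithful, i.e.\ $F'$ is a homological epimorphism by Lemma \ref{Homological epimorphisms of dg categories}, and $F'$ is equivalent to $F$.

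The step I expect to be the main (though modest) obstacle is verifying carefully that $\iota_{*}:\cd\cb\ra\cd\cb'$ is an equivalence — one must check that the representable $\cb'$-modules are genuinely compact in $\cd\cb$ (this is condition 4.1) for $F$) and that they generate, and then invoke infinite dévissage plus full faithfulness of $\iota_{*}$ on the generators to conclude. Everything else is formal: the factorization $F=\iota F'$, the essential surjectivity of $\H 0F'$, and the transfer of the homological epimorphism property through the equivalence $\iota_{*}$. No $k$-flatness is needed, as promised, because we never resolve anything — we only throw away superfluous objects of $\cb$.
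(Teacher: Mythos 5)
Your proof is correct and follows essentially the same route as the paper: take the full dg subcategory $\cb'$ of $\cb$ on the objects in the image of $F$ (the paper closes up under $\H 0$-isomorphism, but either choice works), observe that condition 4.1) of Lemma \ref{Homological epimorphisms of dg categories} makes the representables over $\cb'$ a set of compact generators of $\cd\cb$, invoke \cite[Lemma 4.2]{Keller1994a} to see that restriction along the inclusion $\cb'\hookrightarrow\cb$ is a triangle equivalence, and conclude from $F_{*}=F'_{*}\circ\iota_{*}$. No substantive difference.
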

\begin{proof}
Let $\cb'$ be the full subcategory of $\cb$ formed by the objects $B'$ such that $B'^{\we}\cong(FA)^{\we}$ in $\cd\cb$ (equivalently, $B'\cong FA$ in $\H 0\cb$) for some $A\in\ca$. Put $F':\ca\ra\cb'$ for the dg functor induced by $F$. Now, the restriction $j_{*}:\cd\cb\ra\cd\cb'$ along the inclusion $j:\cb'\ra\cb$ is a triangle equivalence. Indeed, thanks to condition 4) of Lemma \ref{Homological epimorphisms of dg categories} we know that the modules $(jB')^{\we}\ko B'\in\cb'$ form a set of compact generators of $\cd\cb$; then one can apply the techniques of \cite[Lemma 4.2]{Keller1994a}. Therefore, the commutative triangle
\[\xymatrix{\cd\cb\ar[dr]^{F_{*}}\ar[d]_{j_{*}}^{\wr} & \\
\cd\cb'\ar[r]_{F'_{*}} & \cd\ca
}
\]
finishes the proof.
\end{proof}

The following lemma shows that Theorem \ref{Homological epimorphisms of dg categories} virtually covers all the possible compactly generated algebraic triangulated categories.

\begin{lemma}
Every compactly generated $k$-linear algebraic triangulated category (whose set of compact generators has cardinality $\lambda$) is triangle equivalent to the derived category of a small $k$-flat dg category (whose set of objects has cardinality $\lambda$).
\end{lemma}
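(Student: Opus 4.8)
The plan is to start from an arbitrary compactly generated $k$-linear algebraic triangulated category $\ct$, use B.~Keller's Morita theory (subsection \ref{B. Keller's Morita theory for derived categories}) to realize it as the derived category of \emph{some} small dg category, and then replace that dg category by a $k$-flat one without changing the derived category. By \cite[Theorem 4.3]{Keller1994a} (\cf the discussion following Proposition \ref{B. Keller's Morita theory for derived categories}), $\ct$ is triangle equivalent to $\cd\cb$ where $\cb$ is a small dg category whose set of objects is equipotent to a fixed set of compact generators, hence of cardinality $\lambda$. So the substance of the lemma is the following purely dg-categorical claim: \emph{every small dg category $\cb$ admits a $k$-flat dg category $\ca$, with the same set of objects, together with a dg functor $p:\ca\to\cb$ which is the identity on objects and a quasi-isomorphism on each Hom-complex (a ``quasi-equivalence'' that is bijective on objects).} Such a $p$ induces a triangle equivalence $p_{*}:\cd\cb\arr{\sim}\cd\ca$ by standard Morita theory for dg categories, and then $\cd\ca\simeq\cd\cb\simeq\ct$ with $\ca$ having object set of cardinality $\lambda$, which is exactly what is asserted.

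To produce such an $\ca$, I would build a functorial $k$-flat resolution of $\cb$ as a $\cb$-$\cb$-bimodule-compatible dg category, \ie apply the bar-type resolution. Concretely, fix for every $k$-module a functorial surjection from a free (hence flat) $k$-module, and apply the normalized bar construction to the $k$-linear category underlying $\cb$: set $\ca(B,B')$ to be the total complex of the simplicial $k$-module
\[
\cdots \Rrightarrow \coprod_{B_1,B_2} \cb(B,B_1)\ten_k k^{(\cb(B_1,B_2))}\ten_k\cb(B_2,B') \rightrightarrows \coprod_{B_1}\cb(B,B_1)\ten_k\cb(B_1,B') \rightarrow \cb(B,B'),
\]
where $k^{(S)}$ denotes the free $k$-module on a set $S$; composition is given by concatenating bar strings. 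Each term is a coproduct of tensor products over $k$ of free $k$-modules with the complexes $\cb(-,-)$, hence is $k$-flat, and a coproduct of $k$-flat complexes is $k$-flat, so $\ca$ is $k$-flat; the augmentation to $\cb$ is the required quasi-isomorphism because the augmented bar complex is contractible over $k$ (it is the standard simplicial resolution, which admits an extra degeneracy). The set of objects of $\ca$ is literally that of $\cb$, of cardinality $\lambda$.

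Alternatively, and perhaps more cleanly for the write-up, I would invoke the existence of a cofibrant replacement in the model category of small dg categories of B.~To\"en and G.~Tabuada \cite{Tabuada2005a}: every $\cb$ receives a quasi-equivalence $\ca\to\cb$ from a cofibrant dg category $\ca$, cofibrant dg categories are $k$-flat (their Hom-complexes are retracts of complexes built from free $k$-modules), and one checks the weak equivalences in that model structure are exactly the quasi-equivalences, which induce equivalences on derived categories. The slightly delicate point in either approach is the bookkeeping of cardinalities and of objects: one must ensure the resolution does not enlarge the object set (both constructions above are the identity on objects, so this is automatic) and that ``the'' equivalence $\ct\simeq\cd\cb$ from \cite{Keller1994a} already has object set of size exactly $\lambda$. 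The main obstacle is thus not conceptual but a matter of carefully citing the dg-categorical model structure (or spelling out the bar resolution) and verifying $k$-flatness of its Hom-complexes; once that is in place, composing the two equivalences $\ct\simeq\cd\cb$ and $\cd\cb\simeq\cd\ca$ finishes the proof.
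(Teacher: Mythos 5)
Your second route is essentially the paper's own proof: invoke \cite[Theorem 4.3]{Keller1994a} to get $\ct\simeq\cd\cb$ with $|\obj\cb|=\lambda$, take a cofibrant replacement $\ca'\to\cb$ in the Tabuada model structure \cite{Tabuada2005a}, note via \cite[Proposition 2.3]{Toen2005} that it may be taken to be the identity on objects and that the Hom-complexes of a cofibrant dg category are cofibrant in the projective model structure on $\cc k$, hence $\ch$-projective, hence such that $?\otimes_k\ca'(A,A')$ preserves acyclics. The paper spells out the last implication a little more carefully (cofibrant $\Rightarrow$ $\ch$-projective $\Rightarrow$ infinite d\'evissage with respect to $k$ gives acyclicity preservation), where you compress it to ``retracts of complexes built from free $k$-modules,'' but the substance is the same.

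Your first route, the explicit bar-type resolution, is a genuinely different idea, but the formula you wrote does not produce a $k$-flat dg category. In each simplicial degree the two \emph{outer} factors are still the original complexes $\cb(B,B_1)$ and $\cb(B_n,B')$; in particular the degree-$0$ term $\coprod_{B_1}\cb(B,B_1)\otimes_k\cb(B_1,B')$ contains no free $k$-module at all. Tensoring an acyclic complex with $\cb(B,B_1)\otimes_k k^{(S)}\otimes_k\cb(B_2,B')$ need not stay acyclic: the free middle factor does not rescue the non-$k$-flat outer ones, since $(M\otimes_k N)\otimes_k X\cong M\otimes_k(N\otimes_k X)$ and acyclicity of $X$ does not propagate through the non-flat $N$. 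To make the bar construction work one must resolve \emph{every} Hom-complex by free $k$-modules --- for instance by running the comonadic bar construction for the free--forgetful adjunction between ($k$-linear) dg categories and set-valued graded quivers, so that all Hom-complexes of each simplicial stage are direct sums of copies of $k$ --- and then still verify that the totalization is a dg category, that composition is well defined, and that the augmentation is a quasi-equivalence. At that point you are essentially reconstructing by hand a cofibrant replacement in the sense of \cite{Tabuada2005a, Toen2005}, which is precisely the machinery the paper (and your alternative route) cites instead.
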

\begin{proof}
Indeed, by B.~Keller's theorem (\cf subsection \ref{B. Keller's Morita theory for derived categories}) we know that it is triangle equivalent to the derived category of a small $k$-linear dg category $\ca$ satisfying the cardinality condition. If $\ca$ is not $k$-flat, then we can consider a cofibrant replacement $F:\ca'\ra\ca$ of $\ca$ in the model structure of the category of small $k$-linear dg categories constructed by G. Tabuada in \cite{Tabuada2005a}, which can be taken to be the identity on objects \cite[Proposition 2.3]{Toen2005}. In particular, $F$ is a quasi-equivalence \cite[subsection 2.3]{Keller2006b}, and so the restriction along $F$ induces a triangle equivalence between the corresponding derived categories \cite[Lemma 3.10]{Keller2006b}. Since $\ca'$ is cofibrant, by \cite[Proposition 2.3]{Toen2005} for all objects $A\ko A'$ in $\ca$ the complex $\ca'(A,A')$ is cofibrant in the category $\cc k$ of complexes over $k$ endowed with its projective model structure \cite[Theorem 2.3.11]{Hovey1999}. This implies that $\ca'(A,A')$ is $\ch$-projective. Finally, since the functor $?\otimes_{k}k$ preserves acyclic complexes and the $\ch$-projective complexes satisfy the principle of infinite d\'{e}vissage with respect to $k$, then $?\otimes_{k}\ca'(A,A')$ preserves acyclic complexes.
\end{proof}

Notice that, in a compactly generated triangulated category, smashing subcategories form a set. This is thanks to H.~Krause's description of them \cite[Corollary 12.5]{Krause2005} and thanks to the fact that isoclasses of compact objects form a set (\cf \cite[Theorem 5.3]{Keller1994a}). Now we will give several descriptions of this set in the algebraic case.

\begin{theorem}
Let $\cd$ be a compactly generated algebraic triangulated category, and let $\ca$ be a $k$-flat dg category whose derived category is triangle equivalent to $\cd$. There exists a bijection between:
\begin{enumerate}[1)]
\item Smashing subcategories $\cx$ of $\cd$.
\item TTF triples $(\cx,\cy,\cz)$ on $\cd$.
\item (Equivalence classes of) recollements for $\cd$.
\item Equivalence classes of homological epimorphisms of dg categories of the form $F:\ca\ra\cb$ (which can be taken to be bijective on objects).
\end{enumerate}
Moreover, if we denote by $\cs$ any of the given (equipotent) sets, then there exists a surjective map $\cR\ra\cs$, where $\cR$ is the class of objects $P\in\cd$ such that $\{P[n]\}^{\bot}_{n\in\Z}$ is closed under small coproducts.
\end{theorem}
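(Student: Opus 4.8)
The plan is to assemble the four bijections from results already available in the paper and then attach the surjection from $\cR$. First I would establish $1)\Leftrightarrow 2)$: since $\cd$ is compactly generated, hence perfectly generated, the Corollary following the Proposition in subsection \ref{Smashing subcategories} gives that smashing subcategories $\cx$ are in bijection with TTF triples $(\cx,\cx^{\bot},(\cx^{\bot})^{\bot})$ on $\cd$. Next, $2)\Leftrightarrow 3)$ is the standard correspondence between TTF triples and (equivalence classes of) recollements recalled in subsection \ref{TTF triples and recollements}; there is nothing to prove beyond citing it.

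The substantive link is $2)\Leftrightarrow 4)$. For the passage from a homological epimorphism $F:\ca\ra\cb$ to a TTF triple, the discussion following Lemma \ref{Homological epimorphisms of dg categories} shows that $F_{*}:\cd\cb\ra\cd\ca$ is fully faithful with essential image $\cy$, and that this $\cy$ is the middle term of the TTF triple $(\Tria(\{X(?,A)\}_{A\in\ca}),\cy,\cz)$; equivalent homological epimorphisms give the same $\cy$ and hence the same TTF triple, so this map is well defined and injective on equivalence classes. For surjectivity, given a TTF triple $(\cx,\cy,\cz)$ on $\cd$, transport it along the triangle equivalence $\cd\simeq\cd\ca$ (the hypothesis supplies the $k$-flat $\ca$) and apply Theorem \ref{Homological epimorphisms of dg categories}: there is a homological epimorphism $F:\ca\ra\cb$, bijective on objects, whose restriction functor has essential image $\cy$. (One may also invoke the Proposition just above to normalize to the ``bijective on objects'' representative.) This closes the loop and shows all four sets are equipotent.

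Finally, the surjection $\cR\ra\cs$: by the Corollary in subsection \ref{Smashing subcategories} again, it suffices to produce, for a given smashing $\cx$, an object $P\in\cd$ with $\{P[n]\}_{n\in\Z}^{\bot}$ closed under small coproducts and $\Tria(P)$ having the same coaisle as $\cx$, i.e.\ with $\cx=\Tria(P)$. Since $\cd$ is compactly generated, $\cx$ is itself perfectly (indeed compactly) generated by a set $\cp'$ of objects compact in $\cx$, hence compact in $\cd$ by Lemma \ref{Smashing subcategories}; taking $P:=\coprod_{Q\in\cp'}Q$ gives a single object of $\cd$ with $\Tria(P)=\cx$, and then $\{P[n]\}^{\bot}=\cx^{\bot}$ is closed under small coproducts because $\cx$ is smashing, so $P\in\cR$. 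The map $\cR\ra\cs$ is precisely the one from part 3.1) of the Proposition in subsection \ref{Parametrization of TTF triples on triangulated categories} (sending $P$ to $(\Tria(P),\Tria(P)^{\bot},\dots)$ when the orthogonal is coproduct-closed), and the argument above shows it is onto.

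The main obstacle, such as it is, will be bookkeeping rather than a genuinely hard step: one must be careful that ``equivalence of homological epimorphisms'' is exactly matched to ``equality of the middle class $\cy$'', and that the translation of a TTF triple across the equivalence $\cd\simeq\cd\ca$ is compatible with all the functors involved. All the real work is already contained in Theorem \ref{Homological epimorphisms of dg categories}, Lemma \ref{Homological epimorphisms of dg categories}, and the Corollary on smashing subcategories; the present theorem is the act of collating them.
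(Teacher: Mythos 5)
The assembly of bijections $1)\Leftrightarrow 2)$, $2)\Leftrightarrow 3)$, and $2)\Leftrightarrow 4)$ follows the paper exactly: the first two are the Corollary in subsection \ref{Smashing subcategories} and the standard correspondence recalled in subsection \ref{TTF triples and recollements}, and the third correctly combines Theorem \ref{Homological epimorphisms of dg categories} (surjectivity, after transporting across $\cd\simeq\cd\ca$) with the discussion at the start of section \ref{Homological epimorphisms of dg categories} (well-definedness). No problem there.

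Your argument for the surjection $\cR\ra\cs$, however, contains a genuine gap. You assert that a smashing subcategory $\cx$ of the compactly generated category $\cd$ is ``itself perfectly (indeed compactly) generated by a set $\cp'$ of objects compact in $\cx$''. If that were true, then by Lemma \ref{Smashing subcategories} the objects of $\cp'$ would be compact in $\cd$, and (since $\Tria(\cp')$ would then be an aisle contained in $\cx$ generating it) Lemma \ref{Generators and infinite devissage} would force $\cx=\Tria_{\cd}(\cp')$ for a set $\cp'$ of compacts of $\cd$. That is precisely the generalized smashing conjecture, which B.~Keller disproved in \cite{Keller1994}, as the paper itself recalls in subsection \ref{Short survey}; so the claim fails in general, and your construction of $P$ fails with it. The paper instead obtains the surjection $\cR\ra\cs$ directly from part 2) of Proposition \ref{Parametrization of TTF triples on triangulated categories}, which applies because $\cd$, being triangle equivalent to the derived category of a small dg category, is aisled (see the Example in subsection \ref{Recollement-defining sets in aisled categories}). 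Your construction is easily repaired along these lines, with no compactness or perfectness hypothesis on $\cx$: take for $\cp'$ any set of generators of $\cx$, e.g.\ $\cp'=\tau_{\cx}z\tau^{\cz}(\cg)$ where $\cg$ is a set of compact generators of $\cd$ (as in the proof of part 1) of that Proposition). Aisled-ness guarantees $\Tria(\cp')$ is an aisle contained in $\cx$, so $\Tria(\cp')=\cx$ by Lemma \ref{Generators and infinite devissage}; then $P:=\coprod_{Q\in\cp'}Q$ satisfies $\{P[n]\}_{n\in\Z}^{\bot}=\cx^{\bot}$, which is closed under coproducts because $\cx$ is smashing, so $P\in\cR$ and it maps to the given TTF triple.
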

\begin{proof}
The bijection between 1) and 2) was recalled in subsection \ref{Smashing subcategories}, and the bijection between 2) and 3) was recalled in subsection \ref{TTF triples and recollements}. The map from 3) to 4) is given by Theorem \ref{Homological epimorphisms of dg categories}, and the map from 4) to 3) is clear from the comments at the beginning of section \ref{Homological epimorphisms of dg categories}. The surjective map $\cR\ra\cs$ follows from Proposition \ref{Parametrization of TTF triples on triangulated categories}.
\end{proof}

\section{Idempotent two-sided ideals}\label{Idempotent two-sided ideals}

\subsection{Generalized smashing conjecture: short survey}\label{Short survey}

The \emph{generalized smashing conjecture} is a generalization to arbitrary compactly generated triangulated categories of a conjecture due to D.~Ravenel \cite[1.33]{Ravenel1984} and, originally, A.~K.~Bousfield \cite[3.4]{Bousfield1979}. It predicts the following:
\vspace*{0.4cm}

\noindent Every smashing subcategory of a compactly generated triangulated category satisfies the principle of infinite d\'{e}vissage with respect to a set of compact objects.
\vspace*{0.5cm}

However, this conjecture was disproved by B.~Keller in \cite{Keller1994}.

In \cite{Krause2000}, H.~Krause gave the definition of being `generated by a class of morphisms':

\begin{definition}
Let $\cd$ be a triangulated category, $\cx$ a strictly full triangulated subcategory of $\cd$ closed under small coproducts and $\ci$ a class of morphisms of $\cd$. We say that $\cx$ is \emph{generated} by $\ci$ if $\cx$ is the smallest full triangulated subcategory of $\cd$ closed under small coproducts and such that every morphism in $\ci$ factors through some object of $\cx$.
\end{definition}

\noindent This is a generalization of the notion of infinite d\'{e}vissage (\cf subsection \ref{Generators and infinite devissage}), at least when the existence of certain countable coproducts is guaranteed, and allows H.~Krause the following reformulation of the generalized smashing conjecture:
\vspace*{0.4cm}

\noindent Every smashing subcategory of a compactly generated triangulated category is ge\-ne\-ra\-ted by a set of identity morphisms between compact objects.
\vspace*{0.5cm}

This reformulation admits at least two weak versions of the conjecture, one of which was proved by H.~Krause \cite[Corollary 4.7]{Krause2000}:
\emph{Every smashing subcategory of a compactly generated triangulated category is generated by a set of morphisms between compact objects.}

In Corollary \ref{General case} below, we prove `the other' weak version of the conjecture, which substitutes ``morphisms between compact objects'' by ``identity morphisms'' of Milnor colimits of compact objects.

\subsection{From ideals to smashing subcategories}\label{From ideals to smashing subcategories}

If $\cd$ is a triangulated category, we denote by $\cd^c$ the full subcategory of $\cd$ formed by the compact objects and by $\cm or(\cd^c)$ the class of morphisms of $\cd^c$. If $\cd$ is the derived category $\cd\ca$ of a dg category $\ca$, then we also write $\cd^c=\cd^c\ca$.

We write $\cp(\cm or(\cd^c))$ for the large complete lattice of
subclasses of $\cm or(\cd^c)$, where the order is given by the
inclusion. Notice that if $\cd$ is compactly generated and we take
the skeleton of $\cd^c$ instead of $\cd^c$ itself we get a set.
Also, we write $\cp(\cd)$ for the large complete lattice of
classes of objects of $\cd$, where the order here is also given by
the inclusion. Consider the following Galois connection (\cf
\cite[section III.8]{Stenstrom} for the definition and basic
properties of Galois connections):
\[\xymatrix{\cp(\cm or(\cd^c))\ar@<1ex>[rr]^{?^{\bot}} && \cp(\cd)\ar@<1ex>[ll]^{\cm or(\cd^c)^{?}},
}
\]
where given $\ci\in\cp(\cm or(\cd^c))$ we define $\ci^{\bot}$ to be the class of objects $Y$ of $\cd$ such that $\cd(f,Y)=0$ for every morphism $f$ of $\ci$, and given $\cy\in\cp(\cd)$ we define $\cm or(\cd^c)^{\cy}$ to be the class of morphisms $f$ of $\cd^c$ such that $\cd(f,Y)=0$ for every $Y$ in $\cy$.

According to the usual terminology in the theory of Galois
connections, we say that a subclass $\ci$ of $\cm or(\cd^c)$ is
\emph{closed} if $\ci=\cm or(\cd^c)^{\ci^{\bot}}$, and a class
$\cy$ of objects of $\cd$ is \emph{closed} if $\cy=(\cm
or(\cd^c)^{\cy})^{\bot}$. Notice that a subclass $\ci$ of $\cm
or(\cd^c)$ is closed if and only if $\ci=\cm or(\cd^c)^{\cy}$ for
some class $\cy$ of objects of $\cd$. Similarly, a class $\cy$ of
objects of $\cd$ is closed if and only if $\cy=\ci^{\bot}$ for
some class $\ci$ of morphisms of $\cd^c$.

If $\ci$ is a subclass of $\cm or(\cd^c)$, we write $\ci[1]$ for
the class formed by all the morphisms of the form $f[1]$ with $f$
in $\ci$.

The following lemma is an easy exercise.

\begin{lemma}
The Galois connection above induces a bijection between the class formed by the closed ideals $\ci$ of $\cd^c$ such that $\ci[1]=\ci$ and the class formed by the full triangulated subcategories of $\cd$ closed under small coproducts which are closed for the Galois connection.
\end{lemma}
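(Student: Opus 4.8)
The plan is to construct the bijection explicitly in both directions and show they are mutually inverse, all of this being a formal manipulation of the Galois connection together with the basic closure properties of orthogonality classes.

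First I would fix notation: let $\cA$ be the class of closed ideals $\ci$ of $\cd^c$ with $\ci[1]=\ci$, and let $\cB$ be the class of strictly full triangulated subcategories $\cx$ of $\cd$ closed under small coproducts which are closed for the Galois connection (i.e. $\cx=(\cm or(\cd^c)^{\cx})^{\bot}$). The candidate maps are $\ci\mapsto\ci^{\bot}$ in one direction and $\cx\mapsto\cm or(\cd^c)^{\cx}$ in the other. Because we start from closed objects on each side, the two composites $\ci\mapsto\cm or(\cd^c)^{\ci^{\bot}}$ and $\cx\mapsto(\cm or(\cd^c)^{\cx})^{\bot}$ are the identity by the very definition of ``closed'' in a Galois connection; this is the formal part and needs no work. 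So the real content is that the two maps land in the prescribed classes: (a) if $\ci\in\cA$ then $\ci^{\bot}$ is a strictly full triangulated subcategory closed under small coproducts (closedness for the Galois connection being automatic, since $\ci^{\bot}$ is an orthogonal class), and (b) if $\cx\in\cB$ then $\cm or(\cd^c)^{\cx}$ is an ideal of $\cd^c$ stable under $[1]$ (closedness again being automatic).

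For step (a): $\ci^{\bot}$ is closed under small coproducts because each $\cd(f,-)$, being a $\Hom$-functor, sends small products to... — more precisely, for a morphism $f:C\ra C'$ in $\cd^c$ and objects $Y_j$, the map $\cd(f,\coprod Y_j)$ identifies with $\coprod\cd(f,Y_j)$ since $C,C'$ are compact, so it is zero as soon as each $\cd(f,Y_j)$ is zero. It is a triangulated subcategory: it is obviously closed under shifts (using $\ci[1]=\ci$, which forces $\ci^{\bot}[1]=\ci^{\bot}$), and it is closed under extensions/cones by the long exact sequence obtained from applying $\cd(C',-)\ra\cd(C,-)$ to a triangle — a diagram chase on the induced cohomological sequences shows that if $\cd(f,Y')=0=\cd(f,Y'')$ for the outer terms of a triangle $Y'\ra Y\ra Y''\ra Y'[1]$, then $\cd(f,Y)=0$, where one uses that $\cd(f,Y'[1])=0$ too thanks to shift-stability of $\ci$. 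For step (b): $\cm or(\cd^c)^{\cx}=\{f\in\cm or(\cd^c):\cd(f,X)=0\ \forall X\in\cx\}$ is a two-sided ideal because if $\cd(f,X)=0$ for all $X$, then for any composable $g,h$ in $\cd^c$ we have $\cd(gfh,X)=\cd(h,X)\circ\cd(f,X)\circ\cd(g,X)=0$ (the middle factor already vanishing); and it is closed under addition because $\cd(f+f',X)=\cd(f,X)+\cd(f',X)$. Stability under $[1]$ follows since $\cx$ is a triangulated subcategory, hence $\cx[1]=\cx$, whence $\cd(f[1],X)=\cd(f,X[-1])=0$ for all $X\in\cx$.

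I expect the main obstacle — such as it is — to be bookkeeping rather than mathematics: one has to be careful that ``closed for the Galois connection'' is exactly the condition needed so that no extra hypotheses on $\cx$ are required, and conversely that every $\cx\in\cB$ really is recovered as $(\cm or(\cd^c)^{\cx})^{\bot}$; the Galois-connection formalism (\cf \cite[section III.8]{Stenstrom}) guarantees this once one observes that $\cx$ being in the image of $?^{\bot}$ is equivalent to $\cx$ being closed, which is built into the definition of $\cB$. One should also note the harmless size issue: when $\cd$ is compactly generated one replaces $\cd^c$ by a skeleton so that the lattices involved are honest sets, but this does not affect the argument. Since each verification above is a one-line diagram chase using only that compact objects have $\Hom$ commuting with coproducts and that orthogonality classes are automatically ``closed'', the lemma is indeed, as claimed, an easy exercise.
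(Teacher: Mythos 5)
Your framework is correct: the two composites are identities because both classes consist of closed objects, so the real content is showing that the restricted maps land where claimed. Your step (b) is fine, as are the parts of step (a) dealing with small coproducts (compactness of $C,C'$) and shifts ($\ci[1]=\ci$). The gap is in the ``closed under extensions'' claim. From the morphism of long exact sequences attached to $Y'\ra Y\ra Y''\ra Y'[1]$, together with the vanishing of $\cd(f,Y'[n])$ and $\cd(f,Y''[n])$ for all $n$, a diagram chase starting from $g\in\cd(C',Y)$ shows only that the image of $gf$ in $\cd(C,Y'')$ vanishes, hence that $gf$ lies in the image of $\cd(C,Y')\ra\cd(C,Y)$; it does not give $gf=0$. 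The ``five-lemma'' template you invoke would require the middle column to be flanked by isomorphisms, not by zero maps: knowing the outer vertical maps vanish gives no control over what arrives in $\cd(C,Y)$ from $\cd(C,Y')$.

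In fact, the underlying claim --- that $\ci^{\bot}$ is closed under extensions for every closed two-sided ideal $\ci$ of $\cd^c$ with $\ci[1]=\ci$ --- appears to fail. In $\cd(\Z)$ let $\cy$ be the class of complexes whose cohomology groups are all $\Z/2$-vector spaces, and set $\ci:=\cm or(\cd^c)^{\cy}$. Then $\ci$ is a closed two-sided ideal with $\ci[1]=\ci$, and $\ci^{\bot}=\cy$ (one has $\cy\subseteq\ci^{\bot}$ from the Galois connection, and $\ci^{\bot}\subseteq\{2\cdot\id_{\Z[n]}\}^{\bot}_{n\in\Z}=\cy$ since each $2\cdot\id_{\Z[n]}$ lies in $\ci$); yet $\cy$ is not closed under extensions, as the triangle $\Z/2\ra\Z/4\ra\Z/2\ra\Z/2[1]$ has both outer terms in $\cy$ and middle term outside it. So step (a) needs a further hypothesis on $\ci$ --- most naturally idempotency, which is exactly what the paper imposes and exploits in the theorems that follow. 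The paper records the present lemma as an ``easy exercise'' with no proof given; what you have cleanly established is an injection from closed coproduct-closed triangulated subcategories into closed shift-stable ideals, and the surjectivity of that map (equivalently, that $\ci^{\bot}$ is triangulated for every such $\ci$) is precisely where your argument is incomplete.
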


In the following result we explain how a certain two-sided ideal induces a nice smashing subcategory or, equivalently, a TTF triple.

\begin{theorem}\label{from ideals to devissage wrt hc}
Let $\cd$ be a compactly generated triangulated category and let $\ci$ be an idempotent two-sided ideal of $\cd^c$ with $\ci[1]=\ci$. There exists a triangulated TTF triple $(\cx,\cy,\cz)$ on $\cd$ such that:
\begin{enumerate}[1)]
\item $\cx=\Tria(\cp)$, for a certain set $\cp$ of Milnor colimits of sequences of morphisms of $\ci$.
\item $\cy=\ci^{\bot}$.
\item A morphism of $\cd^c$ belongs to $\ci$ if and only if it factors through an object of $\cp$.
\end{enumerate}
\end{theorem}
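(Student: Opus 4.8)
The strategy is to realize the ideal $\ci$ as a set-indexed family of morphisms, build $\cx$ as the $\Tria$ of a set of Milnor colimits of these morphisms, and then verify that the resulting aisle is smashing with coaisle $\ci^{\bot}$. First I would reduce to a set: since $\cd$ is compactly generated, isoclasses of compact objects form a set, so the skeleton of $\cd^c$ is a (small) $k$-category and $\ci$ corresponds to a set of morphisms. For each morphism $f\colon C\ra C'$ in (a skeletal version of) $\ci$, using that $\ci$ is idempotent and stable under shifts I would build a sequence $C=C_0\arr{f_0}C_1\arr{f_1}C_2\ra\cdots$ in which every $f_n$ lies in $\ci$ and the telescope captures $f$ "up to $\ci$" in the sense that $f$ factors through $\Mcolim C_n$; this is the standard device for turning an idempotent ideal into a homotopy-idempotent telescope (compare Krause's and Bökstedt--Neeman's constructions). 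Let $\cp$ be the set of all such Milnor colimits and set $\cx:=\Tria(\cp)$.

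**Key steps.** (1) Show $\cx$ is an aisle: the objects of $\cp$ are built from compact (hence perfect) objects of $\cd$ via one Milnor triangle, so they are perfect in $\cd$, hence perfect in $\Tria(\cp)$; by the adjoint functor argument (Lemma in subsection \ref{(Super)perfectness and compactness}, invoking Krause's Brown representability theorem \cite{Krause2002}) $\Tria(\cp)$ is an aisle. (2) Identify the coaisle with $\ci^{\bot}$: I would show $\cp^{\bot}=\ci^{\bot}$. The inclusion $\cp^{\bot}\supseteq\ci^{\bot}$ follows because each $P\in\cp$ is a Milnor colimit of morphisms of $\ci$, so $\cd(P[n],Y)$ is computed (via the $\lim^1$-sequence for a Milnor triangle) from the groups $\cd(C_m[n],Y)$ and the maps $\cd(f_m[n],Y)$, all of which vanish when $Y\in\ci^{\bot}$. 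For the reverse inclusion $\cp^{\bot}\subseteq\ci^{\bot}$: given $Y\in\cp^{\bot}$ and $f\colon C\ra C'$ in $\ci$, the factorization of $f$ through $\Mcolim C_n\in\cp$ forces $\cd(f,Y)=0$, so $Y\in\ci^{\bot}$. By infinite dévissage (the Remark after Lemma \ref{(Super)perfectness and compactness}) the coaisle of $\cx$ is exactly $\cp^{\bot}=\ci^{\bot}=:\cy$. (3) Show $\cx$ is smashing, equivalently that $\cy=\ci^{\bot}$ is closed under small coproducts: this is immediate since each defining morphism $f$ has compact source and target, so $\cd(f,\coprod Y_i)\cong\coprod\cd(f,Y_i)=0$. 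By Corollary \ref{Smashing subcategories} this yields a TTF triple $(\cx,\cy,\cy^{\bot})$, proving 1) and 2). (4) For assertion 3), one direction is the factorization built into the construction; the converse — that any morphism of $\cd^c$ factoring through some $P\in\cp$ already lies in $\ci$ — I would get from the fact that $\cd(f,-)$ kills $\cy$ precisely when $f\in\cm or(\cd^c)^{\cy}$, combined with a Galois-connection / closedness argument showing $\cm or(\cd^c)^{\ci^{\bot}}=\ci$ for an idempotent shift-stable ideal (this is where one cites, or reproves, the relevant closedness statement in the spirit of Lemma after the Galois connection, ultimately leaning on Krause's \cite[Theorem 4.2]{Krause2000}).

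**Main obstacle.** The delicate point is step (4)'s converse together with the precise telescope construction in step ensuring both that $f$ factors through $\Mcolim C_n$ *and* that no "new" compact morphisms sneak into the factorization class beyond $\ci$ — i.e. that the ideal generated by "factoring through $\cp$" is exactly $\ci$ and not something larger. This requires genuinely using idempotency of $\ci$ (not just that it is a two-sided ideal) to iterate the factorization $f = f''\circ f'$ with $f',f''\in\ci$ into a coherent sequence, and then using compactness of the source of a test morphism $g\colon D\ra D'$ to pull a factorization $g = (D\ra P)\circ(\cdots)$ back through a finite stage $C_m$ of the telescope, exhibiting $g$ as a composite of two morphisms of $\ci$, hence in $\ci$. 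I expect this finite-stage pullback argument, and the verification that the $\lim^1$ term causes no obstruction to the factorization (as opposed to merely to the Hom computation), to be the technical heart of the proof.
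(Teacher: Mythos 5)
Your overall strategy — construct $\cp$ as a set of telescopes of $\ci$-morphisms, verify that $\cx:=\Tria(\cp)$ is an aisle whose coaisle is $\ci^{\bot}$, observe that $\cy=\ci^{\bot}$ is closed under coproducts because $\ci$ lives on compacts, invoke Corollary~\ref{Smashing subcategories} to get the TTF triple, and establish 3) by a factorization-through-a-finite-stage argument — is indeed the route the paper takes. But two points need correction.

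First, your justification in step~(1) that $\Tria(\cp)$ is an aisle is not sound. You claim the objects of $\cp$ are perfect in $\cd$ because they are ``built from compact (hence perfect) objects via one Milnor triangle''. Perfect objects are closed under small coproducts, but \emph{not} under cones: if $\coprod C_n\ra\coprod C_n\ra P\ra\coprod C_n[1]$ is a Milnor triangle, the hypothesis that $\cd(P,M_i)\ra\cd(P,N_i)$ is surjective for each $i$ gives you no control over the corresponding maps for $\coprod C_n$, so you cannot propagate perfectness through the long exact sequence. The paper instead deduces that $\Tria(\cp)$ is an aisle from Porta's \cite[Corollary 3.12]{Porta2007} (any set of objects of a well-generated, in particular compactly generated, triangulated category generates an aisle), which is the correct tool and sidesteps the issue entirely.

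Second, your step~(4) as written in ``Key steps'' is circular: proving ``$g$ factors through $\cp$ $\Rightarrow$ $g\in\ci$'' by passing through the closedness identity $\cm or(\cd^c)^{\ci^{\bot}}=\ci$ presupposes exactly the statement you are trying to prove (closedness of $\ci$ is established in the paper only \emph{after} this theorem, as part of Theorem~\ref{our result}, via Krause's \cite[Theorem 4.2]{Krause2000}). The correct argument is the one you sketch only later in your ``Main obstacle'' paragraph, and it is entirely elementary: if $g\colon Q'\ra Q$ factors through $P=\Mcolim P_n\in\cp$, compactness of $Q'$ lets you lift $Q'\ra P$ through a finite stage $\pi_n\colon P_n\ra P$; then the relation $\pi_{n+1}f_n=\pi_n$ inserts the $\ci$-morphism $f_n$ into the factorization, and since $\ci$ is a two-sided ideal this forces $g\in\ci$. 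There is no $\lim^1$ obstruction to worry about here — that only enters the Hom computation in step~(2), not the factorization. Finally, a cosmetic point: building one telescope per morphism of $\ci$ does not automatically produce a set $\cp$ closed under shifts, which is used implicitly; the paper avoids this by taking $\cp$ to be the Milnor colimits of \emph{all} sequences of $\ci$-morphisms between objects of a fixed shift-closed skeleton of $\cd^c$, using $\ci[1]=\ci$.
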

\begin{proof}
Theorem 5.3 of \cite{Keller1994a} implies that $\cd^c$ is skeletally small. Fix a small skeleton of $\cd^c$ closed under shifts. Let $\cp$ be the set of Milnor colimits of all the sequences of morphisms of $\ci$ between objects of the fixed skeleton. Remark that $\cp$ is closed under shifts. Put $(\cx,\cy,\cz):=(\Tria(\cp),\Tria(\cp)^{\bot},(\Tria(\cp)^{\bot})^{\bot})$.

\emph{First step: $\Tria(\cp)^{\bot}=\ci^{\bot}$.} Notice that $\Tria(\cp)^{\bot}$ is the class of those objects which are right orthogonal to all the objects of $\cp$. Hence, for the inclusion $\ci^{\bot}\subseteq\Tria(\cp)^{\bot}$, it suffices to prove $\cd(P,Y)=0$ for each $P\in\cp$ and $Y\in\ci^{\bot}$. For this, let
\[P_{0}\arr{f_{0}}P_{1}\arr{f_{1}}P_{2}\ra\dots
\]
be a sequence of morphisms in $\ci$ between objects of the fixed skeleton and consider the corresponding Milnor triangle
\[\coprod_{i\geq 0}P_{i}\arr{\id-\sigma}\coprod_{i\geq 0}P_{i}\ra P\ra\coprod_{i\geq 0}P_{i}[1].
\]
Since $\cd(\sigma,Y[n])=0$ for each $n\in\Z$, we get a long exact sequence
\[\dots \arr{\id}\cd(\coprod_{i\geq 0}P_{i}[1],Y)\ra\cd(P,Y)\ra\cd(\coprod_{i\geq 0}P_{i},Y)\arr{\id}\dots
\]
which proves that $\cd(P,Y)=0$. Conversely, let $Y\in\Tria(\cp)^{\bot}$ and consider a morphism $f:P\ra P'$ of $\ci$. Put $P_{0}:=P$ and, by using the idempotency of $\ci$, consider a factorization of $f$
\[\xymatrix{P_{0}\ar[rr]^f\ar[dr]_{f_{0}} && P' \\
& P_{1}\ar[ur]_{g_{1}}
}
\]
with $f_{0}\ko g_{1}\in\ci$. We can consider a similar factorization for $g_{1}$, and proceeding inductively we can produce a sequence of morphisms of $\ci$
\[P_{0}\arr{f_{0}} P_{1}\arr{f_{1}} P_{2}\ra\dots
\]
together with morphisms $g_{i}:P_{i}\ra P'$ of $\ci$ satisfying $g_{i}f_{i-1}=g_{i-1}\ko i\geq 1$ with $g_{0}:=f$.
\[\xymatrix{P=P_{0}\ar[r]^{f_{0}}\ar[dr]_{f=g_{0}} & P_{1}\ar[r]^{f_{1}}\ar[d]^{g_{1}} & P_{2}\ar[r]\ar[dl]^{g_{2}} & \dots \\
& P' &&
}
\]
This induces a factorization of $f$
\[f:P\ra \Mcolim P_{n}\ra P'
\]
Since $\Mcolim P_{n}$ is isomorphic to an object of $\cp$, then $(\cd\ca)(f,Y)=0$. This proves that $\Tria(\cp)^{\bot}=\ci^{\bot}$.

\emph{Second step: $(\cx,\cy,\cz)$ is a triangulated TTF triple.} Thanks to Corollary \ref{Smashing subcategories} it suffices to prove that $\cx$ is a smashing subcategory of $\cd$, \ie that $\cx$ is an aisle in $\cd$ and $\cy$ is closed under small coproducts. The fact that $\cx$ is an aisle follows from \cite[Corollary 3.12]{Porta2007}. The fact that $\cy$ is closed under small coproducts follows from the first step, since the morphisms of $\ci$ are morphisms between \emph{compact} objects.

\emph{Third step: part 3)}. Notice that in the proof of the inclusion $\Tria(\cp)^{\bot}\subseteq\ci^{\bot}$ we have showed that every morphism of $\ci$ factors through an object of $\cp$. The converse is also true. Indeed, let $g:Q'\ra Q$ be a morphism between compact objects factoring through an object $P$ of $\cp$:
\[\xymatrix{& Q'\ar[d]^{g}\ar@{.>}[dl]_{h} \\
P\ar[r] & Q
}
\]
By definition of $\cp$, we have that $P$ is the Milnor colimit of a sequence of morphisms of $\ci$:
\[P=\Mcolim(P_{0}\arr{f_{0}}P_{1}\arr{f_{1}}P_{2}\ra\dots)
\]
Now compactness of $Q'$ implies that $h$ factors through a certain $P_{n}$:
\[\xymatrix{&& Q'\ar[d]^{g}\ar@{.>}[dll]_{h_{n}} \\
P_{n}\ar[r]_{\pi_{n}}& P\ar[r] & Q
}
\]
where $\pi_{n}$ is the $n$th component of the morphism $\pi$ appearing in the Milnor triangle defining $P$. One of the properties satisfied by the components of $\pi$ is the identity: $\pi_{m+1}f_{m}=\pi_{m}$ for each $m\geq 0$. This gives the following factorization for $g$:
\[\xymatrix{&&& Q'\ar[d]^{g}\ar@{.>}[dlll]_{h_{n}} \\
P_{n}\ar[r]_{f_{n}}&P_{n+1}\ar[r]_{\pi_{n+1}}& P\ar[r] & Q
}
\]
Since $f_{n}$ belongs to the ideal $\ci$, so does $g$.
\end{proof}

\subsection{Stretching a filtration}\label{Stretching a filtration}

The results of this subsection are to be used in the proof of Proposition \ref{From smashing subcategories to ideals}. Throughout this subsection $\ca$ will be a small dg category.

We define $\cs$ to be the set of right dg $\ca$-modules $S$ admitting a finite filtration
\[0=S_{-1}\subset S_{0}\subset S_{1}\subset\dots S_{n}=S
\]
in $\cc\ca$ such that
\begin{enumerate}[1)]
\item the inclusion morphism $S_{p-1}\subset S_{p}$ is an inflation for each $0\leq p\leq n$,
\item the factor $S_{p}/S_{p-1}$ is isomorphic in $\cc\ca$ to a \emph{relatively free module of finite type} (\ie it is a finite coproduct of modules of the form $A^{\we}[i]\ko A\in\ca\ko i\in\Z$) for each $0\leq p\leq n$.
\end{enumerate}

\begin{lemma}
\begin{enumerate}[1)]
\item The compact objects of $\cd\ca$ are precisely the direct summands of objects of $\cs$.
\item For each $S\in\cs$ the functor $(\cd\ca)(S,q?):\cc\ca\ra\Mod k$ preserves direct limits, where $q:\cc\ca\ra\cd\ca$ is the canonical localization functor.
\item For each $S\in\cs$ the functor $(\cc\ca)(S,?):\cc\ca\ra\Mod k$ preserves direct limits.
\end{enumerate}
\end{lemma}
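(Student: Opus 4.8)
The plan is to derive statements 3) and 2) quickly from two structural observations about the class $\cs$, and to concentrate the effort on 1). I would first record that \emph{$(\ast)$ $\cs$ is stable under the shifts $?[n]$, $n\in\Z$, and under conflations of $\cc\ca$}: stability under shifts is clear, since shifting a filtration of the prescribed type produces another one (a finite coproduct of modules $A^{\we}[i]$ shifted is again such a coproduct, and inflations shift to inflations), and given a conflation $0\to M'\to M\to M''\to 0$ of $\cc\ca$ with $M',M''\in\cs$, it is degreewise split, so $M\cong M'\oplus M''$ as graded $\ca$-modules; concatenating a filtration of $M'$ with the preimages in $M$ of a filtration of $M''$ then yields a filtration of $M$ whose steps are (degreewise split) inflations and whose factors are exactly those of the two given filtrations, whence $M\in\cs$. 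Next I would note \emph{$(\ast\ast)$ every object of $\cs$ is $\ch$-projective}: the representables $A^{\we}$ are $\ch$-projective (\cf \cite{Keller1994a}), $\ch$-projectivity is inherited by shifts and finite coproducts, and it passes through a conflation $0\to K'\to K\to K''\to 0$, since for acyclic $N$ the induced sequence of Hom-complexes $0\to(\cc_{dg}\ca)(K'',N)\to(\cc_{dg}\ca)(K,N)\to(\cc_{dg}\ca)(K',N)\to 0$ is degreewise split exact with acyclic outer terms, hence has acyclic middle term. Finally I would observe \emph{$(\ast\ast\ast)$ for $S\in\cs$ the Hom-complex functor $(\cc_{dg}\ca)(S,?)\colon\cc\ca\to\cc k$ preserves direct limits}: this is clear for $S=A^{\we}[i]$, where it is a shift of evaluation at $A$; it follows for a finite coproduct of such modules because finite products of complexes of $k$-modules commute with direct limits; and it propagates, by induction on the length of the filtration, to an arbitrary $S\in\cs$ via the exactness of direct limits and the five lemma, applied to the degreewise split short exact sequence of Hom-complexes attached to a conflation.

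Statements 2) and 3) are then immediate: by $(\ast\ast)$ any $S\in\cs$ is $\ch$-projective, so $(\cd\ca)(S,qM)\cong(\ch\ca)(S,M)=\H{0}\,(\cc_{dg}\ca)(S,M)$ and $(\cc\ca)(S,M)=\Zy{0}\,(\cc_{dg}\ca)(S,M)$, naturally in $M$; since direct limits are exact in $\Mod k$ they commute with kernels and with cohomology, hence $\Zy{0}$ and $\H{0}$ commute with direct limits of complexes of $k$-modules, and $(\ast\ast\ast)$ finishes the job.

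For 1), let $\cv$ be the smallest full triangulated subcategory of $\cd\ca$ containing the representables $A^{\we}$, $A\in\ca$. By B.~Keller's Morita theory (\cf subsection \ref{B. Keller's Morita theory for derived categories} and \cite{Keller1994a}), $\cd^c\ca$ is the thick closure of $\cv$, and the thick closure of a triangulated subcategory consists precisely of the direct summands of its objects; so $\cd^c\ca$ coincides with the class $\cv^{\oplus}$ of objects of $\cd\ca$ that are direct summands of objects of $\cv$. Thus it suffices to identify $\cv$ with the class $\langle\cs\rangle$ of objects of $\cd\ca$ isomorphic to some member of $\cs$. The inclusion $\langle\cs\rangle\subseteq\cv$ is immediate, since the conflations in a filtration of $S\in\cs$ give triangles $S_{p-1}\to S_p\to S_p/S_{p-1}\to S_{p-1}[1]$ of $\cd\ca$ whose third terms are relatively free of finite type, exhibiting $S$ as built from shifted representables by finitely many extensions. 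For $\cv\subseteq\langle\cs\rangle$ I would check that $\langle\cs\rangle$ is a triangulated subcategory: it is replete, contains the $A^{\we}$, and is stable under shifts by $(\ast)$; and it is stable under cones, because up to isomorphism a morphism of $\langle\cs\rangle$ is a morphism $g\colon R_1\to R_2$ of $\cd\ca$ with $R_1,R_2\in\cs$, which by the $\ch$-projectivity of $R_1$ from $(\ast\ast)$ is represented by a chain map $\widetilde g$ of $\cc\ca$; the mapping cone of $\widetilde g$ computes $\cone(g)$ and sits in a conflation $0\to R_2\to\cone(\widetilde g)\to R_1[1]\to 0$ of $\cc\ca$ with $R_2,R_1[1]\in\cs$ (the latter by $(\ast)$), hence is an object of $\cs$ by $(\ast)$. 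Therefore $\cv=\langle\cs\rangle$, and $\cd^c\ca=\langle\cs\rangle^{\oplus}$ is exactly the class of direct summands of objects of $\cs$; in particular every object of $\cs$, lying in $\cv\subseteq\cd^c\ca$, is compact.

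The step I expect to be the genuine obstacle is this last identification $\cv=\langle\cs\rangle$, that is, upgrading ``finite iterated extension in the triangulated category $\cd\ca$'' to ``honest dg module carrying an $\cs$-filtration''. The two points that make it go through are the $\ch$-projectivity of $(\ast\ast)$, which lets one replace morphisms of $\cd\ca$ by chain maps, and the closedness of $\cs$ — defined via \emph{degreewise split} short exact sequences — under such extensions, which is $(\ast)$; once these and Keller's description of $\cd^c\ca$ are granted, the remainder is bookkeeping.
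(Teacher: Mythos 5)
Your argument is correct, and it makes a clean and worthwhile simplification of parts 2) and 3). For part 1) you follow essentially the paper's route: Keller's description of $\cd^c\ca$ as the idempotent (thick) closure of the triangulated subcategory $\cv$ generated by the representables, together with the observation that iterated cones of morphisms between objects of $\cs$ remain, up to isomorphism in $\cd\ca$, in $\cs$. Your observation $(\ast\ast)$ — that objects of $\cs$ are $\ch$-projective, so a morphism of $\cd\ca$ between them is represented by a genuine chain map whose mapping cone again sits in a degreewise split conflation with ends in $\cs$ — is exactly the mechanism implicitly behind the paper's terse phrase ``every triangle of $\cd\ca$ is isomorphic to a triangle coming from a conflation of $\cc\ca$'', and spelling it out is a real improvement in rigor (a generic conflation realizing the triangle need not a priori have its ends in $\cs$; the $\ch$-projective lift guarantees it).

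Where your approach genuinely diverges from the paper is in 2) and 3). The paper treats them separately: 2) is proved via the commuting square with evaluation at $A$ and $\H 0$, then an induction on the filtration with the five lemma; 3) is handled by a rather delicate diagram chase producing a four-term exact sequence $0\to(\cc\ca)(S'',N)\to(\cc\ca)(S,N)\to(\cc\ca)(S',N)\to(\cd\ca)(S''[-1],N)$ and another five-lemma argument. Your $(\ast\ast\ast)$ lifts the whole problem one level up, to the dg Hom-complex functor $(\cc_{dg}\ca)(S,?)\colon\cc\ca\to\cc k$, for which a single induction on the filtration (using the degreewise split short exact sequences of Hom-complexes, exactness of $\lid$, and the five lemma) suffices. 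Statements 2) and 3) then drop out simultaneously by applying $\H 0$ and $\Zy 0$, which commute with direct limits because direct limits in $\Mod k$ are exact; the $\ch$-projectivity of $(\ast\ast)$ is used to identify $(\cd\ca)(S,q?)$ with $\H 0\,(\cc_{dg}\ca)(S,?)$. This is tighter than the paper's treatment and avoids the ad hoc exactness argument for 3) altogether, so it is a legitimate alternative proof, not merely a rewording.

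One small presentational point: when you argue that $\langle\cs\rangle$ is a triangulated subcategory, you verify stability under shifts and under cones of morphisms; it is worth noting explicitly that this already yields the full ``two out of three'' closure, since in any triangle each vertex is, up to shift, a cone on the morphism between the other two.
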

\begin{proof}
1) We know by \cite[Theorem 5.3]{Keller1994a} that any compact object $P$ of $\cd\ca$ is a direct summand of a finite extension of objects of the form $A^{\we}[n]\ko A\in\ca\ko n\in\Z$. By using that every triangle of $\cd\ca$ is isomorphic to a triangle coming from a conflation of $\cc\ca$, we have that this kind of finite extensions are objects isomorphic in $\cd\ca$ to objects of $\cs$.

2) Let $I$ be a directed set. We regard it as a category and denote by $\Fun(I,?)$ the category of functors starting in $I$. We want to prove that, for every object $S\in\cs$, the following square is commutative:
\[\xymatrix{\Fun(I,\cc\ca)\ar[r]^{\lid}\ar[d]_{(I,(\cd\ca)(S,q?))} & \cc\ca\ar[d]^{(\cd\ca)(S,q?)}\\
\Fun(I,\Mod k)\ar[r]^{\lid}&\Mod k
}
\]
\emph{First step:}
We first prove it for $S=A^{\we}\ko A\in\ca$. That will prove assertion 2) for relatively free dg $\ca$-modules of finite type.
For an object $A$ of $\ca$ we consider the dg functor $F:k\ra\ca$, with $F(k)=A$ and $F(1_{k})=\id_{A}$. The restriction
\[F_{*}:\cc\ca\ra\cc k\ko M\mapsto M(A)
\]
along $F$ admits a right adjoint, and so it preserves colimits. Consider the commutative diagram
\[\xymatrix{\Fun(I,\cc\ca)\ar[rr]^{\lid}\ar[d]^{(I,F_{*})}\ar@/_3pc/[dd]_{(I,(\cd\ca)(A^{\we},q?))} && \cc\ca\ar[d]_{F_{*}}\ar@/^3pc/[dd]^{(\cd\ca)(A^{\we},q?)} \\
\Fun(I,\cc k)\ar[rr]^{\lid}\ar[d]^{(I,\H 0)} && \cc k\ar[d]_{\H 0}  \\
\Fun(I,\Mod k)\ar[rr]_{\lid}&&\Mod k
}
\]
Since $F_{*}$ preserves colimits, the top rectangle commutes. Since
\[\lid:\Fun(I,\Mod k)\ra\Mod k
\]
is exact, the bottom rectangle commutes. Indeed,
\begin{align}
\lid\H 0M_{i}=\lid\cok(\Bo 0M_{i}\ra \Zy 0M_{i})=\cok(\lid(\Bo 0M_{i}\ra \Zy 0M_{i}))= \nonumber \\
=\cok(\lid\Bo 0M_{i}\ra\lid\Zy 0M_{i})=\cok(\Bo 0(\lid M_{i})\ra\Zy 0(\lid M_{i}))=\H 0\lid M_{i}. \nonumber
\end{align}

\emph{Second step:} Let $S'\ra S\ra S''$ be a conflation of $\cc\ca$ such that $S'$ and $S''$ (and hence all their shifts) satisfy property 2), and let $M\in\Fun(I,\cc\ca)$ be a direct system.
By using the cohomological functors $(\cd\ca)(?,M_{i})\ko (\cd\ca)(?,\lid M_{i})$, the fact that $\lid:\Fun(I,\Mod k)\ra\Mod k$ is exact and the five lemma, we get an isomorphism
\[\lid(\cd\ca)(S, M_{i})\arr{\sim} (\cd\ca)(S, \lid M_{i})
\]

3) By using the same techniques as in 2), one proves that relatively free dg modules of finite type have the required property. Now, let $S'\arr{j}S\arr{p}S''$ be a conflation such that $S'$ satisfies the required property and $S''$ is relatively free of finite type. For each $N\in\cc\ca$, this conflation gives an exact sequence
\[0\ra(\cc\ca)(S'',N)\arr{p^{\che}}(\cc\ca)(S,N)\arr{j^{\che}}(\cc\ca)(S',N).
\]
Let $u\in(\cc\ca)(S''[-1],S')$ be a morphism whose mapping cone is $\cone(u)=S$. By using the triangle
\[S''[-1]\arr{u}S'\arr{j}S\arr{p}S''
\]
and the canonical localization $q:\cc\ca\ra\cd\ca$ we can fit the sequence above in the following commutative diagram
\[\xymatrix{0\ar[d] & \\
(\cc\ca)(S'',N)\ar[d]^{p^{\che}} & \\
(\cc\ca)(S,N)\ar[d]^{j^{\che}}\ar@{->>}[r]^{q} & (\cd\ca)(S,N)\ar[d]^{j^{\che}} \\
(\cc\ca)(S',N)\ar[d]^{\phi}\ar@{->>}[r]^{q} & (\cd\ca)(S',N)\ar[d]^{u^{\che}} \\
(\cd\ca)(S''[-1],N)\ar[r]^{\id} & (\cd\ca)(S''[-1],N)
}
\]
The aim is to prove that the left-hand column is exact. Indeed, $\varphi j^{\che}=0$ since $u^{\che}j^{\che}=0$.
It only remains to prove that $\ker\varphi$ is contained in $\im(j^{\che})$. For this, let $f\in(\cc\ca)(S',N)$ be such that $\varphi(f)=fu$ vanishes in
\[(\cd\ca)(S''[-1],N)\underset{\sim}{\leftarrow}(\ch\ca)(S''[-1],N),
\]
\ie $fu$ is null-homotopic. By considering the triangle above, one sees that $f$ factor through $j$ in $\ch\ca$, \ie there exists $g\in(\cc\ca)(S,N)$ such that $f-gj$ is null-homotopic. Then, there exists $h\in(\cc\ca)(IS',N)$ such that $f=gj+hi_{S'}$, where $i_{S'}:S'\ra IS'$ is an inflation to an injective module (for the graded-wise split exact structure of $\cc\ca$). Since $j$ is an inflation, then there exists $g'$ such that $g'j=i_{S'}$. Therefore, $f=gj+hi_{S'}=(g+hg')j$ belongs to $\im(j^{\che})$.

Finally, let $M\in\op{Fun}(I,\cc\ca)$ be a direct system. Using assertion 2) and the hypothesis on $S'$ and $S''$, we have a morphism of exact sequences:
\[\xymatrix{0\ar[d] & 0\ar[d] \\
\lid(\cc\ca)(S'',M_{i})\ar[r]^{\sim}\ar[d] & (\cc\ca)(S'',\lid M_{i})\ar[d] \\
\lid(\cc\ca)(S,M_{i})\ar[r]\ar[d] & (\cc\ca)(S,\lid M_{i})\ar[d] \\
\lid(\cc\ca)(S',M_{i})\ar[r]^{\sim}\ar[d] & (\cc\ca)(S',\lid M_{i})\ar[d] \\
\lid(\cd\ca)(S''[-1],M_{i})\ar[r]^{\sim} & (\cd\ca)(S''[-1],\lid M_{i})
}
\]
where the horizontal arrows are the natural ones. Hence, the second horizontal arrow is an isomorphism.
\end{proof}

\begin{proposition}
Every $\ch$-projective right dg $\ca$-module is, up to isomorphism in $\ch\ca$, the colimit in $\cc\ca$ of a direct system of submodules $S_{i}\ko i\in I$, such that:
\begin{enumerate}[1)]
\item $S_{i}\in\cs$ for each $i\in I$,
\item for each $i\leq j$ the morphism $\mu^{i}_{j}:S_{i}\ra S_{j}$ is an inflation.
\end{enumerate}
\end{proposition}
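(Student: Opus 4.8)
The plan is to reduce to the case where $P$ already carries a well-ordered exhaustive filtration of the appropriate shape — a module with \emph{property (P)} in B.~Keller's sense — and then to present $P$ as the directed union of the ``finite sub-extensions'' spanned by finitely many cells of that filtration. First I would replace $P$ by its cofibrant resolution $\textbf{p}P$: since $P$ is $\ch$-projective, the resolution $\textbf{p}P\ra P$ is a homotopy equivalence (a quasi-isomorphism between $\ch$-projective modules, \cf \cite{Keller1994a}), so $P\cong\textbf{p}P$ in $\ch\ca$, and $\textbf{p}P$ has property (P) (\cf \cite{Keller1994a, Keller2006b}): it admits an exhaustive filtration $0=F_{0}\subseteq F_{1}\subseteq\dots\subseteq F_{\beta}\subseteq\dots$ by dg submodules, indexed by the ordinals $\leq\lambda$ for some $\lambda$ and continuous at limit ordinals, such that each inclusion $F_{\beta}\subseteq F_{\beta+1}$ is an inflation of $\cc\ca$ (a graded-split monomorphism of graded $\ca$-modules) and each factor $F_{\beta+1}/F_{\beta}$ is relatively free, \ie a coproduct $\coprod_{j\in J_{\beta}}A_{\beta,j}^{\we}[n_{\beta,j}]$ of shifts of representables. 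Since ``up to isomorphism in $\ch\ca$'' is exactly what we may lose, I assume henceforth that $P$ itself has property (P).

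Next I would fix, for each $\beta$, a graded $\ca$-linear splitting of $F_{\beta+1}\ra F_{\beta+1}/F_{\beta}$ (available since this surjection is graded-split), which identifies each summand $A_{\beta,j}^{\we}[n_{\beta,j}]$ of the $\beta$-th factor with a graded $\ca$-submodule $C_{\beta,j}\subseteq P$; I call the pair $(\beta,j)$ a \emph{cell} of \emph{level} $\beta$. The crucial point is the finiteness of the attaching data: the ``off-diagonal'' part of the differential of $P$ restricts on $C_{\beta,j}$ to a morphism of \emph{graded} $\ca$-modules $C_{\beta,j}\ra F_{\beta}$, and since $C_{\beta,j}$ is generated over $\ca$ by one homogeneous element, its image meets only the sub-coproduct of $F_{\beta}$ spanned by a finite set $\partial(\beta,j)$ of cells of level $<\beta$. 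For a finite set $T$ of cells let $\ol{T}$ denote its closure under $\partial$; because $\partial$ takes finite values and strictly lowers the (well-founded) level, a routine well-founded induction shows that $\ol{T}$ is again finite.

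Then, for a finite $\partial$-closed set $T$ of cells I would put $S_{T}:=\bigoplus_{(\beta,j)\in T}C_{\beta,j}$, a graded direct summand of $P$ which is stable under the differential precisely because $T$ is $\partial$-closed (the diagonal part keeps each $C_{\beta,j}$ to itself, the off-diagonal part carries it into cells of $\partial(\beta,j)\subseteq T$); thus $S_{T}$ is a dg submodule of $P$. Listing the finitely many levels $\beta_{1}<\dots<\beta_{m}$ occurring in $T$ and setting $S_{T}^{(r)}:=\bigoplus_{(\beta_{s},j)\in T,\ s\leq r}C_{\beta_{s},j}$ exhibits a finite filtration of $S_{T}$ by dg submodules whose inclusions are inflations and whose factors $\coprod_{(\beta_{r},j)\in T}A_{\beta_{r},j}^{\we}[n_{\beta_{r},j}]$ are relatively free of finite type; hence $S_{T}\in\cs$. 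The finite $\partial$-closed sets, ordered by inclusion, form a directed set $I$ (a finite union of finite $\partial$-closed sets is again finite $\partial$-closed), for $T\subseteq T'$ the inclusion $\mu_{T}^{T'}:S_{T}\hookrightarrow S_{T'}$ is a sub-coproduct inclusion and hence an inflation, and every homogeneous element of $P=\bigcup_{\beta}F_{\beta}$ is supported on finitely many cells, hence lies in $S_{\ol{T_{0}}}$ for a suitable finite $T_{0}$. Therefore $P=\bigcup_{T\in I}S_{T}=\lid_{T\in I}S_{T}$ in $\cc\ca$, as required.

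I expect the main obstacle to be the finiteness of the $\partial$-closure: it forces one to take as atomic pieces the representable-shift summands of the layers (rather than, say, a $k$-basis of a layer, whose internal differential could produce an infinite descent) and to exploit that the property-(P) filtration is well-ordered while the diagonal part of the differential never leaves a cell. The only other delicate step is the initial reduction, where ``up to isomorphism in $\ch\ca$'' is precisely what lets us pass from $P$ to $\textbf{p}P$ and thereby sidestep the retract subtlety in ``$\ch$-projective $=$ retract of a property-(P) module''.
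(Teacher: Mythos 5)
Your argument is correct, but it is structured quite differently from the paper's. After making the same reduction to a module $P$ with a Keller-type exhaustive filtration by inflations with relatively free factors, the paper proves the claim by an induction along that filtration (Third step): it writes each stage $P_{n}$ as $\cone(f)$ for a morphism $f:L=\bigoplus_{I}A_{i}^{\we}[n_{i}]\ra P_{n-1}$, decomposes $L$ as the directed union of its finite subcoproducts $L_{F}$, uses the compactness established in Lemma~\ref{Stretching a filtration} to factor $f|_{L_{F}}$ through some term $S_{j}$ of the inductively obtained direct system for $P_{n-1}$, and takes the cones $\cone(f_{(F,j)})$; a separate gluing lemma (First step) then assembles the resulting direct systems for the $P_{n}$ into one for $P$. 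You bypass both the induction and the gluing by working directly with a cell decomposition: once compatible graded $\ca$-linear sections of the filtration are fixed -- available because the inclusions $F_{\beta}\subset F_{\beta+1}$ are inflations -- the Leibniz rule makes the off-diagonal component of the differential $\ca$-linear on each cell, so that a cell, being $\ca$-generated by one homogeneous element, attaches to only finitely many lower-level cells; the finite subobjects are then simply the internal direct sums of finite $\partial$-closed families of cells, with finiteness of the $\partial$-closure supplied by well-founded recursion on the level. Your route buys a cleaner, more explicit description of the finite approximants and replaces the paper's cone manipulations and gluing step by one combinatorial observation; the paper's route avoids any choice of splittings or inspection of the differential and stays inside the exact/triangulated vocabulary used elsewhere in the section, at the cost of the extra First step. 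Both rest on the same essential finiteness: a finitely generated piece of a stage attaches to finitely much of what precedes it.
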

\begin{proof}
\emph{First step:} Assume that an object $P$ is the colimit of a direct system $P_{t}\ko t\in T$, of subobjects such that the structure morphisms $\mu^r_{t}: P_{r}\ra P_{t}$ are inflations and each $P_{t}$ is the colimit of a direct system $S_{(t,i)}\ko i\in I_{t}$, of subobjects satisfying the following property $(*)$:
\begin{enumerate}[1)]
\item $S_{(t,i)}\in\cs$ for each $i\in I_{t}$,
\item for each $i$ the morphism $\mu_{(t,i)}: S_{(t,i)}\ra P_{t}$ is an inflation.
\end{enumerate}
Notice that 2) implies that for each $i\leq j$ the structure morphism $\mu^{i}_{j}:S_{(t,i)}\ra S_{(t,j)}$ is an inflation.

Put $I:=\bigcup_{t\in T}(\{t\}\times I_{t})$ and define the following preorder: $(r,i)\leq (t,j)$ if $r\leq t$ and we have a factorization as follows
\[\xymatrix{S_{(r,i)}\ar@{.>}[rr]^{\mu^{(r,i)}_{(t,j)}}\ar[d]_{\mu_{(r,i)}} && S_{(t,j)}\ar[d]^{\mu_{(t,j)}} \\
P_{r}\ar[rr]_{\mu^{r}_{t}} && P_{t}
}
\]
Notice that $\mu^{(r,i)}_{(t,j)}$ is an inflation. To prove that $I$ is a directed preordered set take $(r,i)\ko (t,j)$ in $I$ and let $s\in T$ with $r\ko t\leq s$. Since $S_{(r,i)}$ and $S_{(t,j)}$ are in $\cs$, thanks to Lemma \ref{Stretching a filtration} we know that the compositions $S_{(r,i)}\ra P_{r}\ra P_{s}$ and $S_{(t,j)}\ra P_{t}\ra P_{s}$ factors through $S_{(s,i_{r})}\ra P_{s}$ and $S_{(s,j_{t})}\ra P_{s}$. Now, if $i_{r}\ko j_{t}\leq k$ we have that $(r,i)\ko (t,j)\leq (s,k)$. Now, take the quotient set $I':=I/\sim$, where $(r,i)\sim (r',i')$ when $(r,i)\leq (r',i')\leq (r,i)$. Notice that in this case $r=r'$ and the inflations $\mu^{(r,i)}_{(r,i')}$ and $\mu^{(r,i')}_{(r,i)}$ are mutually inverse. Thus, $S_{(t,i)}\ko [(t,i)]\in I'$ is a direct system of subobjects of $P$ which are in $\cs$ and whose colimit is easily seen to be $P$.

\emph{Second step:} Let $P$ be an $\ch$-projective module. The proof of \cite[Theorem 3.1]{Keller1994a} shows that, up to replacing $P$ by a module isomorphic to it in $\ch\ca$, we can consider a filtration
\[0=P_{-1}\subset P_{0}\subset P_{1}\subset\dots P_{n}\subset P_{n+1}\dots\subset P\ko n\in\N
\]
such that
\begin{enumerate}[1)]
\item $P$ is the union of the $P_{n}\ko n\in\N$,
\item the inclusion morphism $P_{n-1}\subset P_{n}$ is an inflation for each $n\in\N$,
\item the factor $P_{n}/P_{n-1}$ is isomorphic in $\cc\ca$ to a \emph{relatively free module} (\ie, a direct sum of modules of the form $A^{\we}[i]\ko A\in\ca\ko i\in\Z$) for each $n\in\N$.
\end{enumerate}
Thanks to the first step, it suffices to prove that each $P_{n}$ is the colimit of a direct system of subobjects satisfying $(*)$. We will prove it inductively.

\emph{Third step:} Notice that $P_{n}=\cone(f)$ for a morphism $f:L\ra P_{n-1}$, where $L=\bigoplus_{I}A^{\we}_{i}[n_{i}]\ko A_{i}\in\ca\ko n_{i}\in\Z$. By hypothesis of induction, $P_{n-1}=\lid S_{j}$ where $S_{j}\ko j\in J$ is a direct system satisfying $(*)$. Let $\cF\cp(I)$ be the set of finite subsets of $I$, and put $L_{F}=\bigoplus_{i\in F}A^{\we}_{i}[n_{i}]$ for $F\in\cF\cp(I)$. Notice that $\cF\cp(I)$ is a directed set with the inclusion, and that $L=\lid L_{F}$. Consider the set $\Omega$ of pairs $(F,j)\in\cF\cp(I)\times J$ such that there exists a morphism $f_{(F,j)}$ making the following diagram commutative:
\[\xymatrix{L_{F}\ar[r]^{f_{(F,j)}}\ar[d]_{u_{F}} & S_{j}\ar[d]^{\mu_{j}} \\
L\ar[r]_{f} & P_{n-1}
}
\]
$\Omega$ is a directed set with the order: $(F,j)\leq (F',j')$ if and only if $F\subseteq F'$ and $j\leq j'$. Let $\mu^j_{j'}:S_{j}\ra S_{j'}$ and $u^F_{F'}:L_{F}\ra L_{F'}$ be the structure morphisms of the direct systems $S_{j}\ko j\in J$ and $L_{F}\ko F\in\cF\cp(I)$. Then one can check that $\cone(f_{(F,j)})\ko (F,j)\in\Omega$ is a direct system of modules, with structure morphisms
\[\left[\begin{array}{cc}\mu^j_{j'}& 0 \\ 0 & u^F_{F'}[1]\end{array}\right]:\cone(f_{(F,j)})\ra\cone(f_{(F',j')}),
\]
and whose colimit is $\cone(f)=P_{n}$ via the morphisms
\[\left[\begin{array}{cc}\mu_{j}& 0 \\ 0 & u_{F}[1]\end{array}\right]:\cone(f_{(F,j)})\ra\cone(f)=P_{n},
\]
which are inflations. Finally, notice that, since there exists a conflation
\[S_{j}\ra\cone(f_{(F,j)})\ra L_{F}[1]
\]
with $S_{j}\in\cs$ and $L_{F}[1]$ relatively free of finite type, then each $\cone(f_{(F,j)})$ is in $\cs$.
\end{proof}

\subsection{From smashing subcategories to ideals}\label{From smashing subcategories to ideals}

\subsubsection{H. Krause's bijection}\label{H. Krause's bijection}

Recall (\cf \cite[subsection 2.3]{Beligiannis2000} or \cite[Definition 8.3]{Krause2005}) that a two-sided ideal $\ci$ of a triangulated category $\cd$ is \emph{saturated} if whenever there exists a triangle
\[P'\arr{u} P\arr{v} P''\ra P'[1]
\]
in $\cd$ and a morphism $f\in\cd(P,Q)$ with $fu\ko v\in\ci$, then $f\in\ci$.

After Theorem 11.1, Theorem 12.1, Corollary 12.5 and Corollary 12.6 of the article \cite{Krause2005} of H.~Krause, an taking into account subsection \ref{Smashing subcategories}, one has the following nice bijection:

\begin{theorem}
If $\cd$ is a compactly generated triangulated category, the maps
\[(\cx,\cy,\cz)\mapsto\cm or(\cd^c)^{\cy}
\]
and
\[\ci\mapsto(^{\bot}(\ci^{\bot}),\ci^{\bot}, (\ci^{\bot})^{\bot})
\]
define a bijection between the set of TTF triples on $\cd$ and the set of saturated idempotent two-sided ideals $\ci$ of $\cd^c$ such that $\ci[1]=\ci$.
\end{theorem}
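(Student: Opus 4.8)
The plan is to combine the two main external inputs—H.~Krause's bijection results from \cite{Krause2005} and Theorem \ref{from ideals to devissage wrt hc} above—to establish that the two displayed maps are mutually inverse bijections. First I would recall, from \cite[Theorem 11.1, Theorem 12.1, Corollary 12.5, Corollary 12.6]{Krause2005}, that the assignment sending a smashing subcategory to the ideal of those morphisms in $\cd^c$ that factor through it yields a bijection between smashing subcategories of $\cd$ and saturated idempotent two-sided ideals $\ci$ of $\cd^c$; the only reformulation needed is to rephrase this via the description of the ideal as $\cm or(\cd^c)^{\cy}$, where $\cy$ is the coaisle $\cx^{\bot}$ of the smashing subcategory $\cx$, and to translate ``smashing subcategory'' into ``TTF triple'' using the bijection $\cx\mapsto(\cx,\cx^{\bot},(\cx^{\bot})^{\bot})$ recalled in subsection \ref{Smashing subcategories} (valid since $\cd$, being compactly generated, is in particular perfectly generated, so Corollary \ref{Smashing subcategories} applies). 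That already gives a bijection onto the saturated idempotent ideals; what remains is to check the shift-stability condition $\ci[1]=\ci$ and to verify that the stated inverse map $\ci\mapsto(^{\bot}(\ci^{\bot}),\ci^{\bot},(\ci^{\bot})^{\bot})$ is the correct one.

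For the shift-stability: if $\ci=\cm or(\cd^c)^{\cy}$ for a coaisle $\cy$ of a TTF triple, then since $\cy$ is a triangulated subcategory it is closed under shifts in both directions, and a morphism $f$ of $\cd^c$ satisfies $\cd(f,Y)=0$ for all $Y\in\cy$ if and only if $\cd(f[1],Y)=0$ for all $Y\in\cy$ (replace $Y$ by $Y[-1]\in\cy$); hence $\ci[1]=\ci$. Conversely, Krause's saturated idempotent ideals arising this way automatically satisfy $\ci[1]=\ci$, so the target of the first map is exactly the set of saturated idempotent two-sided ideals with $\ci[1]=\ci$, as claimed.

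For the inverse map: given a saturated idempotent two-sided ideal $\ci$ with $\ci[1]=\ci$, apply Theorem \ref{from ideals to devissage wrt hc} to obtain a (triangulated) TTF triple $(\cx,\cy,\cz)$ with $\cy=\ci^{\bot}$, $\cx=\Tria(\cp)=\,^{\bot}\cy=\,^{\bot}(\ci^{\bot})$, and $\cz=\cy^{\bot}=(\ci^{\bot})^{\bot}$; this is precisely the triple written in the statement. It then remains to see that the two maps compose to the identity in both directions. Starting from $\ci$ and forming $(^{\bot}(\ci^{\bot}),\ci^{\bot},(\ci^{\bot})^{\bot})$, part 3) of Theorem \ref{from ideals to devissage wrt hc} says a morphism of $\cd^c$ lies in $\ci$ iff it factors through an object of $\cp\subseteq\cx$, i.e. $\ci$ equals the ideal of maps factoring through $\cx$; by Krause's characterization this ideal is $\cm or(\cd^c)^{\cy}$, so we recover $\ci$. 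Starting from a TTF triple $(\cx,\cy,\cz)$, set $\ci:=\cm or(\cd^c)^{\cy}$; then $\ci^{\bot}$ is the class of objects right orthogonal to all maps factoring through $\cx$, and one checks $\ci^{\bot}=\cx^{\bot}=\cy$ (the inclusion $\cy\subseteq\ci^{\bot}$ is immediate, and the reverse uses that $\cx$, being compactly generated—its generators lying in $\cd^c$ by Lemma \ref{Smashing subcategories} and Corollary \ref{Smashing subcategories}—satisfies infinite d\'evissage, so objects killed by all $\cd(f,-)$ with $f$ factoring through $\cx$ are killed by all maps from $\cx$, exactly as in the first step of the proof of Theorem \ref{from ideals to devissage wrt hc}); hence the reconstructed triple is $(^{\bot}\cy,\cy,\cy^{\bot})=(\cx,\cy,\cz)$. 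The main obstacle I anticipate is bookkeeping: making sure that the notion of ``generated by a set of morphisms'' and ``the ideal of morphisms factoring through $\cx$'' used implicitly in \cite{Krause2005} matches exactly the Galois-connection language $\cm or(\cd^c)^{\cy}$, $\ci^{\bot}$ set up here, and that saturation of the ideal—which is automatic from Krause's side—need not be re-proved; but since the lemma before the statement already records that closed shift-stable ideals correspond to the relevant triangulated subcategories, this translation should be routine once the dictionary is spelled out.
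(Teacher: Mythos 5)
Your overall strategy — invoke Krause's bijection from \cite{Krause2005}, translate smashing subcategories into TTF triples via Corollary \ref{Smashing subcategories}, and use Theorem \ref{from ideals to devissage wrt hc} to identify the explicit inverse formula — is the same route the paper takes: the statement is presented there as an immediate consequence of Theorems 11.1 and 12.1 and Corollaries 12.5 and 12.6 of \cite{Krause2005} together with subsection \ref{Smashing subcategories}, with no further argument supplied. However, your verification that $\ci^{\bot}=\cy$ contains a genuine error. You assert that the aisle $\cx$ of a TTF triple on a compactly generated $\cd$ is compactly generated with generators lying in $\cd^c$ (citing Lemma \ref{Smashing subcategories} and Corollary \ref{Smashing subcategories}) and hence that $\cx$ satisfies infinite d\'evissage with respect to a set of compact objects of $\cd$. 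Neither cited result gives this: Lemma \ref{Smashing subcategories} only shows that objects which happen to be compact \emph{in} $\cx$ are compact in $\cd$, and Corollary \ref{Smashing subcategories} is merely the smashing-TTF correspondence. In fact the statement that $\cx$ is generated by objects of $\cd^c$ is precisely the generalized smashing conjecture, disproved by B.~Keller \cite{Keller1994} (see subsection \ref{Short survey}); moreover, since by Lemma \ref{Smashing subcategories} any compact generator of $\cx$ would automatically be $\cd$-compact, in Keller's counterexample $\cx$ is not compactly generated even as an abstract triangulated category.

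The correct argument for $\ci^{\bot}\subseteq\cy$ — which is exactly what the paper carries out in the proof of Theorem \ref{our result} — uses the factorization Proposition of subsection \ref{General case} (assertion 2') of \cite[Theorem 4.2]{Krause2000}). Given $M\in\ci^{\bot}$, the truncation triangle $x\tau_{\cx}M\ra M\ra y\tau^{\cy}M\ra x\tau_{\cx}M[1]$ together with $\cy\subseteq\ci^{\bot}$ yields $x\tau_{\cx}M\in\cx\cap\ci^{\bot}$. If $x\tau_{\cx}M\neq 0$, take a nonzero $f:P\ra x\tau_{\cx}M$ with $P$ compact; by that Proposition, $f=vu$ where $u:P\ra P'$ is a morphism of $\cd^c$ factoring through an object of $\cx$, hence $u\in\ci$; then $f=vu=0$ because $x\tau_{\cx}M\in\ci^{\bot}$, a contradiction. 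Substituting this for your d\'evissage claim repairs the proof; alternatively, once you have checked that the composite $\ci\mapsto(^{\bot}(\ci^{\bot}),\ci^{\bot},(\ci^{\bot})^{\bot})\mapsto\cm or(\cd^c)^{\ci^{\bot}}$ is the identity via part 3) of Theorem \ref{from ideals to devissage wrt hc}, the other composite is forced to be the identity as well by the mere existence of Krause's bijection, so the direct verification you attempted is not strictly needed.
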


\subsubsection{General case}\label{General case}

Let us deduce a bijection in the spirit of the above theorem
directly from Theorem \ref{From ideals to smashing subcategories}
and the following result of H.~Krause:

\begin{proposition}
Let $\cx$ be a smashing subcategory of a compactly generated triangulated category $\cd$. If $P$ is a compact object of $\cd$, $M$ is an object of $\cx$ and $f:P\ra M$ is a morphism of $\cd$, then there exists a factorization in $\cd$
\[\xymatrix{P\ar[rr]^{f}\ar[dr]_{u} && M \\
& P'\ar[ur] &
}
\]
with $P'$ compact and $u$ factoring through an object of $\cx$.
\end{proposition}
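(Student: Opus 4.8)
The plan is to use H.~Krause's description of smashing subcategories together with the characterization of compact objects as direct summands of objects of $\cs$ from subsection \ref{Stretching a filtration}. Since $\cd$ is compactly generated and algebraic (up to replacing by the derived category of a $k$-flat dg category $\ca$, by the lemma of subsection \ref{Parametrization for derived categories}), we may as well work in $\cd\ca$. First I would replace $P$ by a relatively free dg module of finite type (more precisely an object of $\cs$, using that compact objects are direct summands of objects of $\cs$, part 1) of Lemma \ref{Stretching a filtration}) and $M$ by an $\ch$-projective module, so that $f$ is represented by an actual morphism of $\cc\ca$; we also use that $\cx$ is smashing to know that the truncation triangle $\tau_{\cx}M\to M\to\tau^{\cx^{\bot}}M\to$ respects small coproducts, and that $M\in\cx$ means $M=\tau_{\cx}M$.

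The key step is to exhibit $M$ as a suitable (filtered/direct) colimit and then use compactness of $P$ to factor $f$ through a stage. By Proposition \ref{Stretching a filtration} the $\ch$-projective module $M$ is isomorphic in $\ch\ca$ to the colimit in $\cc\ca$ of a direct system of submodules $S_i\in\cs$, $i\in I$, with inflations $\mu^i_j:S_i\hookrightarrow S_j$ as transition maps. By part 3) of Lemma \ref{Stretching a filtration}, for $P\in\cs$ the functor $(\cc\ca)(P,?)$ preserves direct limits, so the morphism $f:P\to M=\lid S_i$ in $\cc\ca$ factors as $P\arr{u'}S_i\arr{\mu_i}M$ for some $i\in I$. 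Setting $P':=S_i$, which is compact by Lemma \ref{Stretching a filtration} 1), and $u:=u'$, $q\;(:P'\to M):=\mu_i$, we obtain the desired factorization $f=q\circ u$ at the level of $\cc\ca$, hence a fortiori in $\cd\ca$.

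It remains to arrange that $u$ factors through an object of $\cx$. This is where being inside the smashing subcategory $\cx$ is used: because $M\in\cx$ and $\cx^{\bot}$ is closed under coproducts, one can cook up the colimit presentation of $M$ so that not just $M$ but each relevant stage, or at least the image of each transition map, lands (up to the relevant adjustment) in $\cx$; alternatively one composes $u$ with the unit $P'\to\tau_{\cx}P'$ and observes that, since the image $\mu_i:P'\to M$ lands in $\cx=\Tria(\cd^c\cap\cx)$ and $P'$ is compact, the morphism $u$ factors through $\tau_{\cx}P'$, which lies in $\cx$ --- replacing $P'$ by $\tau_{\cx}P'$ is not allowed since that need not be compact, so instead one keeps $P'$ compact and records that $u$ itself factors through the object $\tau_{\cx}P'$ of $\cx$. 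Concretely: $\cx$ being smashing, $\tau_{\cx}$ commutes with coproducts, hence $\tau_{\cx}M=\Mcolim$ (or colimit) of the $\tau_{\cx}S_i$, and since $M\in\cx$ already, the structure maps can be taken inside $\cx$; then $u$ factors as $P'\to\tau_{\cx}P'\to\tau_{\cx}S_i$ followed by inclusion, exhibiting the factorization of $u$ through an object of $\cx$.

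The main obstacle I anticipate is precisely the last point: getting the factorization of $u$ (not just of $f$) through an object of $\cx$ while keeping $P'$ compact. The naive move of replacing $P'$ by $\tau_{\cx}P'$ destroys compactness, so one must instead carefully choose the direct system $(S_i)$ --- or pass through the Milnor-colimit machinery of Theorem \ref{From ideals to smashing subcategories} with the idempotent ideal $\ci=\cm or(\cd^c)^{\cx^{\bot}}$ --- so that each $S_i$ already maps through a compact object which itself factors through $\cx$; here one invokes that $\cx=\Tria(\cp)$ for $\cp$ a set of Milnor colimits of sequences of morphisms of $\ci$ (Theorem \ref{From ideals to smashing subcategories} 1)) together with the factorization property in part 3) of that theorem. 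Modulo this bookkeeping, the argument is a combination of Proposition \ref{Stretching a filtration}, Lemma \ref{Stretching a filtration}, and the smashing/TTF formalism of subsections \ref{Smashing subcategories} and \ref{From ideals to smashing subcategories}.
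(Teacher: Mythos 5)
Your setup matches the paper's: pass to $\cd\ca$ for a $k$-flat dg category $\ca$, assume $P\in\cs$ and $M$ is $\ch$-projective, write $M=\lid_I S_i$ with $S_i\in\cs$ via the Proposition of subsection \ref{Stretching a filtration}, and use Lemma \ref{Stretching a filtration} to factor $f$ through some stage of the direct system. Up to here you are on the right track.

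However, the step you flag as the ``main obstacle'' --- showing that $u:P\ra P'$ factors through an object of $\cx$ --- is left genuinely unresolved, and the fixes you sketch do not work. Factoring $f$ directly as $P\ra S_i\ra M$ gives no reason whatsoever for $P\ra S_i$ to factor through $\cx$: the truncation $\tau_{\cx}$ is \emph{right} adjoint to the inclusion $x:\cx\hookrightarrow\cd$, so the relevant arrow is the counit $x\tau_{\cx}S_i\ra S_i$, not a ``unit $P'\ra\tau_{\cx}P'$'' as you write, and there is no reason the composite $P\ra S_i\ra y\tau^{\cy}S_i$ should vanish. The suggestion that one can ``cook up the colimit presentation'' so that the compact stages $S_i$ land in $\cx$ amounts to the telescope conjecture and is false in general. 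Invoking Theorem \ref{From ideals to smashing subcategories} with $\ci=\cm or(\cd^c)^{\cy}$ is circular, since idempotency of that ideal is established in Theorem \ref{our result} precisely by means of this Proposition.

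The missing idea --- and the actual content of the paper's proof --- is to route the factorization through the homological epimorphism $F:\ca\ra\cb$ attached to $(\cx,\cy,\cz)$ by Theorem \ref{Homological epimorphisms of dg categories}. One does not factor $f:P\ra M$ through $S_i$, but rather through $S_i\otimes_{\ca}X$, using the triangle $X\ra\ca\ra F_{*}\cb\ra X[1]$ and the identification of $\tau_{\cx}$ with $?\otimes^{\bf L}_{\ca}X$. Concretely, since $M\in\cx$ the counit $M\otimes_{\ca}X\ra M$ is an isomorphism, the direct system $S_i\otimes_{\ca}X$ has colimit $M\otimes_{\ca}X\cong M$, and by Lemma \ref{Stretching a filtration}\,2) the morphism $f$ lifts to some $(\cd\ca)(P,S_i\otimes_{\ca}X)$; this yields $f:P\ra S_i\otimes_{\ca}X\ra S_i\ra M$, and now the middle object $S_i\otimes_{\ca}X$ lies in $\cx$ (it is in the essential image of $?\otimes^{\bf L}_{\ca}X$), so $u:P\ra S_i$ visibly factors through $\cx$ while $P':=S_i$ is compact. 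Without exhibiting the intermediate object in $\cx$ in this way, the argument has no handle on $u$, and your proof stops short of the statement to be proved.
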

\begin{proof}
This statement is assertion 2') of \cite[Theorem 4.2]{Krause2000}. We will give here the proof for the algebraic setting by using that, in this case, every smashing sub\-ca\-te\-go\-ry is induced by a homological epimorphism. First, recall that the smashing subcategory $\cx$ fits into a TTF triple $(\cx,\cy,\cz)$ on $\cd$. Thanks to Lemma \ref{Parametrization for derived categories} we know that there exists a small $k$-flat dg category $\ca$ whose derived category $\cd\ca$ is triangle equivalent to $\cd$. Also, thanks to Lemma \ref{Stretching a filtration} we can assume that $P$ belongs to the set $\cs$ defined in subsection \ref{Stretching a filtration}. Let $F:\ca\ra\cb$ be a homological epimorphism of dg categories associated (\cf Theorem \ref{Homological epimorphisms of dg categories}) to the TTF triple $(\cx,\cy,\cz)$, and fix a triangle
\[X\arr{\alpha}\ca\arr{F}F_{*}\cb\ra X[1]
\]
in $\cd(\ca^{op}\otimes_{k}\ca)$. Assume $M$ is $\ch$-projective, and let $S_{i}\ko i\in I$, be a direct system of submodules of $M$ as in Proposition \ref{Stretching a filtration} so that $M=\lid S_{i}$. Then, we get a direct system $S_{i}\otimes_{\ca}X\ko i\in I$, such that $\lid (S_{i}\otimes_{\ca}X)\cong M\otimes_{\ca}X$. Since $M\in\cx$, for each $i\in I$ we have a commutative square
\[\xymatrix{(\cd\ca)(P,S_{i}\otimes_{\ca}X)\ar[r]\ar[d] &(\cd\ca)(P,M\otimes_{\ca}X)\ar[d]^{\wr} \\
(\cd\ca)(P,S_{i})\ar[r] &(\cd\ca)(P,M)
}
\]
where the horizontal arrows are induced by the morphisms $\mu_{i}:S_{i}\ra M$ associated to the colimit and the vertical arrows are induced by the compositions
\[S_{i}\otimes_{\ca}X\overset{\id\otimes\alpha}{\longrightarrow}S_{i}\otimes_{\ca}\ca\underset{\sim}{\longrightarrow}S_{i}
\]
and
\[M\otimes_{\ca}X\underset{\sim}{\overset{\id\otimes\alpha}{\longrightarrow}}M\otimes_{\ca}\ca\underset{\sim}{\longrightarrow}M.
\]
According to Lemma \ref{Stretching a filtration}, there exists an index $i\in I$ such that $f$ comes from a morphism $(\cd\ca)(P,S_{i}\otimes_{\ca}X)$ via the square above. Then, $f$ factors in $\cd\ca$ as
\[f:P\ra S_{i}\otimes_{\ca}X\ra S_{i}\arr{\mu_{i}}M
\]
and the result follows from the fact that $S_{i}\in\cs$ and $S_{i}\otimes_{\ca}X\in\cx$ (see the comments immediately after Lemma \ref{Homological epimorphisms of dg categories}).
\end{proof}

Here we have the promised bijection:

\begin{theorem}\label{our result}
If $\cd$ is a compactly generated triangulated category, the maps
\[(\cx,\cy,\cz)\mapsto\cm or(\cd^c)^{\cy}
\]
and
\[\ci\mapsto(^{\bot}(\ci^{\bot}),\ci^{\bot}, (\ci^{\bot})^{\bot})
\]
define a bijection between the set of all the TTF triples on $\cd$ and the set of all closed idempotent two-sided ideals $\ci$ of $\cd^c$ such that $\ci[1]=\ci$. \end{theorem}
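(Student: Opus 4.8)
The plan is to write down the two maps and check they are mutually inverse. Let $\Phi$ send a TTF triple $(\cx,\cy,\cz)$ to the subclass $\cm or(\cd^c)^{\cy}$ of $\cm or(\cd^c)$, and let $\Psi$ send an ideal $\ci$ to the triple $(^{\bot}(\ci^{\bot}),\ci^{\bot},(\ci^{\bot})^{\bot})$. The key preliminary observation I would record is that, for any TTF triple $(\cx,\cy,\cz)$ on $\cd$,
\[\cm or(\cd^c)^{\cy}=\{f\in\cm or(\cd^c):f\text{ factors through an object of }\cx\}.\]
The inclusion ``$\supseteq$'' is immediate because $\cy=\cx^{\bot}$. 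For ``$\subseteq$'', given $f\colon P\ra Q$ in $\cd^c$ with $\cd(f,Y)=0$ for every $Y\in\cy$, I would apply this to the object $Y_Q$ in the truncation triangle $X_Q\ra Q\ra Y_Q\ra X_Q[1]$ of $Q$ for the t-structure $(\cx,\cy)$, where $X_Q\in\cx$ and $Y_Q\in\cy$: the morphism $Q\ra Y_Q$ then kills $f$, so $f$ factors through $X_Q\in\cx$. I would also note that $\cm or(\cd^c)^{\cy}$ is automatically a two-sided ideal, is closed for the Galois connection (being of the form $\cm or(\cd^c)^{?}$), and satisfies $\ci[1]=\ci$ since $\cy$ is stable under shifts.

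Next I would verify that both maps are well defined. For $\Psi$ there is nothing to do: Theorem~\ref{From ideals to smashing subcategories} asserts exactly that for an idempotent two-sided ideal $\ci$ with $\ci[1]=\ci$ the triple $(^{\bot}(\ci^{\bot}),\ci^{\bot},(\ci^{\bot})^{\bot})$ is a TTF triple, with middle class $\ci^{\bot}$. For $\Phi$ the only non-formal point is idempotency of $\ci:=\cm or(\cd^c)^{\cy}$. Given $f\colon P\ra Q$ in $\ci$, factor $f=hg$ with $g\colon P\ra X$ and $h\colon X\ra Q$ for some $X\in\cx$; since $\cx$ is a smashing subcategory and $P$ is compact, the Proposition above (assertion 2') of \cite[Theorem~4.2]{Krause2000}) yields a factorization $g=g'u$ with $P'$ compact, $u\colon P\ra P'$ factoring through an object of $\cx$, and $g'\colon P'\ra X$. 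Then $f=(hg')u$, where $hg'$ again factors through $X\in\cx$; by the identification above both $u$ and $hg'$ belong to $\ci$, so $f\in\ci^{2}$, and hence $\ci=\ci^{2}$.

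For the composite $\Phi\Psi$: if $\ci$ is a closed idempotent ideal with $\ci[1]=\ci$, then $\Psi(\ci)$ has middle class $\ci^{\bot}$ by Theorem~\ref{From ideals to smashing subcategories}, so $\Phi(\Psi(\ci))=\cm or(\cd^c)^{\ci^{\bot}}$, which equals $\ci$ by the very definition of $\ci$ being closed. For the composite $\Psi\Phi$: starting from a TTF triple $(\cx,\cy,\cz)$, put $\ci=\cm or(\cd^c)^{\cy}$ and let $(\bar\cx,\bar\cy,\bar\cz)=\Psi(\ci)$ be the triple produced by Theorem~\ref{From ideals to smashing subcategories}, so $\bar\cx=\Tria(\cp)$ with $\cp$ a set of Milnor colimits of sequences of morphisms of $\ci$, and $\bar\cy=\ci^{\bot}$; it suffices to show $\bar\cx=\cx$, since a TTF triple is determined by its aisle. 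For $\bar\cx\subseteq\cx$: a generator $P\in\cp$ is $\Mcolim P_n$ with $f_n\colon P_n\ra P_{n+1}$ in $\ci$, hence, by the identification above, $f_n$ factors as $P_n\ra Z_n\ra P_{n+1}$ with $Z_n\in\cx$; interleaving the sequences $(P_n)$ and $(Z_n)$ into a single sequence shows $\Mcolim P_n$ is also the Milnor colimit of the $Z_n$, which lies in $\cx$ as $\cx$ is triangulated and closed under coproducts, so $\Tria(\cp)\subseteq\cx$. For $\cx\subseteq\bar\cx$: given $X\in\cx$, take the truncation triangle $X''\ra X\ra W\ra X''[1]$ for the aisle $\bar\cx$; then $X''\in\bar\cx\subseteq\cx$, hence $W\in\cx\cap\bar\cx^{\bot}=\cx\cap\ci^{\bot}$. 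Finally $\cx\cap\ci^{\bot}=0$: for $W$ in this intersection and $g\colon P\ra W$ with $P$ compact, the Proposition above factors $g$ through a compact $P'$ via a morphism $P\ra P'$ factoring through $\cx$, hence lying in $\ci$; since $W\in\ci^{\bot}$ this forces $g=0$, and as $\cd$ is compactly generated we conclude $W=0$. Thus $X\cong X''\in\bar\cx$, so $\cx=\bar\cx$ and $\Psi(\Phi(\cx,\cy,\cz))=(\cx,\cy,\cz)$.

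I expect the main obstacle to be the inclusion $\cx\subseteq\Tria(\cp)$, that is, the vanishing $\cx\cap\ci^{\bot}=0$: this is the step where one genuinely needs the factorization of the preceding Proposition to replace an arbitrary morphism out of a compact object into the smashing subcategory by a morphism lying in $\ci$, and it is also where compact generation of $\cd$ itself (as opposed to of $\cx$, which may fail) is essential. The interleaving argument giving $\bar\cx\subseteq\cx$ is routine — a cofinal-subsequence argument for Milnor colimits — but should be spelled out carefully, and one should also double-check at the outset that $\cm or(\cd^c)^{\cy}$ really is a two-sided ideal stable under shifts.
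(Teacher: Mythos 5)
Your proof is correct, and its skeleton matches the paper's: identify $\cm or(\cd^c)^{\cy}$ with the morphisms of $\cd^c$ factoring through an object of $\cx$, deduce idempotency from assertion 2') of Krause's Theorem~4.2, invoke Theorem~\ref{From ideals to smashing subcategories} to know that $\Psi$ lands in TTF triples, and establish the key vanishing $\cx\cap\ci^{\bot}=0$ by again combining Krause's factorization with compact generation of $\cd$. The one place you diverge is in checking that $\Psi\Phi$ is the identity: the paper compares middle classes and proves $\ci^{\bot}=\cy$, whereas you compare aisles and prove $\bar\cx=\Tria(\cp)=\cx$, which is what leads you to the interleaving argument for the inclusion $\bar\cx\subseteq\cx$. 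That argument is valid, but it is an avoidable detour: the unit of the Galois connection already gives $\cy\subseteq(\cm or(\cd^c)^{\cy})^{\bot}=\ci^{\bot}=\bar\cy$, and taking left perpendiculars yields $\bar\cx\subseteq\cx$ with no cofinality lemma about Milnor colimits at all. Equivalently, since a TTF triple is determined by its middle class, $\bar\cy=\ci^{\bot}$ holds by the very form of $\Psi$, and $\cy\subseteq\ci^{\bot}$ is automatic, so the only non-formal point is $\ci^{\bot}\subseteq\cy$ --- which, as you correctly identify, reduces to $\cx\cap\ci^{\bot}=0$.
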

\begin{proof}
Let $(\cx,\cy,\cz)$ be a TTF triple on the category $\cd$ and put $\ci:=\cm or(\cd^c)^{\cy}$. Bearing in mind the Galois connection of subsection \ref{From ideals to smashing subcategories} it only remains to prove that $\ci$ is idempotent and $\ci^{\bot}=\cy$.

It is easy to check that $\ci$ is precisely the class of all morphisms of $\cd^c$ which factor through an object of $\cx$. Then, by using the above proposition we prove that $\ci$ is idempotent. Finally, let us check that $\ci^{\bot}=\cy$. Of course, it is clear that  $\cy$ is contained in $\ci^{\bot}$. Conversely, let $M$ be an object of $\ci^{\bot}$ and consider the triangle
\[x\tau_{\cx}M\ra M\ra y\tau^{\cy}M\ra x\tau_{\cx}M[1].
\]
Since both $M$ and $y\tau^{\cy}M$ belongs to $\ci^{\bot}$, then $x\tau_{\cx}M$ belongs to $\cx\cap\ci^{\bot}$. Since the compact objects generate $\cd$, if $x\tau_{\cx}M\neq 0$ there exists a non-zero morphism
\[f:P\ra x\tau_{\cx}M
\]
for some compact object $P$. Thanks to the above proposition, we know that $f$ admits a factorization $f=vu$ through a compact object with $u$ in $\ci$, and so $f=0$. This contradiction implies $x\tau_{\cx}M=0$ and thus $M\in\cy$.
\end{proof}

\begin{remark}
When $\cd$ is the derived category of a small dg category, the idempotency of the ideals of the above theorem reflects the `derived idempotency' of the bimodule appearing in the characterization of homological epimorphisms (\cf Lemma \ref{Homological epimorphisms of dg categories}).
\end{remark}

As a consequence of Theorems \ref{From ideals to smashing subcategories} and \ref{General case} we get our announced weak version of the generalized smashing conjecture:

\begin{corollary}
Every smashing subcategory of a compactly generated triangulated category satisfies the principle of infinite d\'evissage with respect to a set of Milnor colimits of compact objects.
\end{corollary}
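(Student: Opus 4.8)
The plan is to read off the d\'evissage set directly from the ideal-theoretic description of smashing subcategories. Let $\cd$ be a compactly generated triangulated category and let $\cx$ be a smashing subcategory, so that $\cx$ fits into a triangulated TTF triple $(\cx,\cy,\cz)$ with $\cx={}^{\bot}\cy$ (\cf subsection~\ref{Smashing subcategories}). Set $\ci:=\cm or(\cd^c)^{\cy}$.

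By Theorem~\ref{our result}, $\ci$ is a closed idempotent two-sided ideal of $\cd^c$ with $\ci[1]=\ci$, and moreover $\ci^{\bot}=\cy$. Next I would apply Theorem~\ref{from ideals to devissage wrt hc} to $\ci$: being an idempotent two-sided ideal stable under $[1]$, it yields a triangulated TTF triple $(\cx',\cy',\cz')$ on $\cd$ with $\cx'=\Tria(\cp)$, where $\cp$ is a set of Milnor colimits of sequences of morphisms of $\ci$, and with $\cy'=\ci^{\bot}$. Since the morphisms of $\ci$ are morphisms of $\cd^c$, the terms of each such sequence are compact objects, so every object of $\cp$ is a Milnor colimit of a sequence of compact objects.

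It remains to identify the two TTF triples, and here the bijection does all the work: $\cy'=\ci^{\bot}=\cy$, and since the aisle of a TTF triple is the left orthogonal of its coaisle, $\cx={}^{\bot}\cy={}^{\bot}\cy'=\cx'$. Therefore $\cx=\Tria(\cp)$; since $\Tria(\cp)$ has small coproducts by construction, this is precisely the statement that $\cx$ satisfies the principle of infinite d\'evissage with respect to the set $\cp$ of Milnor colimits of compact objects.

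The computation is short; all the substance has been absorbed into Theorem~\ref{from ideals to devissage wrt hc} (the idempotency-driven telescope that builds $\cp$) and Theorem~\ref{our result} (which supplies the idempotency of $\ci$ and the identity $\ci^{\bot}=\cy$). The one point requiring a moment's care, and the one I would flag as the only, minor, obstacle, is the final identification: one must check that running $\cx$ through $\ci$ and back through Theorem~\ref{from ideals to devissage wrt hc} returns the \emph{original} $\cx$ rather than some other smashing subcategory sharing its coaisle; this is automatic since a TTF triple is determined by its coaisle.
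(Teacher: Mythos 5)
Your proof is correct and coincides with the argument the paper intends: the paper derives the corollary as an immediate consequence of Theorem~\ref{from ideals to devissage wrt hc} and Theorem~\ref{our result}, and you supply exactly the bookkeeping left implicit there — set $\ci=\cm or(\cd^c)^{\cy}$, apply Theorem~\ref{from ideals to devissage wrt hc} to obtain $\Tria(\cp)$ with $\cp$ a set of Milnor colimits of sequences in $\ci$ (hence of compact objects), and identify the two TTF triples via the shared coaisle $\ci^{\bot}=\cy$, using that the aisle of a t-structure is recovered as the left orthogonal of its coaisle.
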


\subsubsection{Algebraic case}\label{Algebraic case}

For compactly generated algebraic triangulated categories we get an alternative proof of H.~Krause's bijection as stated in Theorem \ref{H. Krause's bijection}.

\begin{corollary}
Let $\cd$ be a compactly generated algebraic triangulated category. Then
\[(\cx,\cy,\cz)\mapsto\cm or(\cd^c\ca)^{\cy}
\]
and
\[\ci\mapsto(^{\bot}(\ci^{\bot}),\ci^{\bot}, (\ci^{\bot})^{\bot})
\]
define a bijection between the set of all the TTF triples on $\cd$ and the set of all saturated idempotent two-sided ideals $\ci$ of $\cd^c$ such that $\ci[1]=\ci$.
\end{corollary}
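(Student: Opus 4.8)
The plan is to deduce this from Theorem~\ref{our result}. Since $\cd$ is algebraic, Lemma~\ref{Parametrization for derived categories} supplies a small $k$-flat dg category whose derived category is triangle equivalent to $\cd$, so that Proposition~\ref{From smashing subcategories to ideals}, and hence Theorem~\ref{our result}, is available for $\cd$; by the latter, the two maps of the statement are mutually inverse bijections between TTF triples on $\cd$ and \emph{closed} idempotent two-sided ideals $\ci$ of $\cd^c$ with $\ci[1]=\ci$. Thus it suffices to prove that, for an idempotent two-sided ideal $\ci$ of $\cd^c$ with $\ci[1]=\ci$, being closed is equivalent to being saturated.

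The easy implication is closed $\Rightarrow$ saturated. Suppose $\ci$ is closed, so $\ci=\cm or(\cd^c)^{\cy}$ with $\cy=\ci^{\bot}$. Given a triangle $P'\arr{u}P\arr{v}P''\ra P'[1]$ in $\cd$ and $f\in\cd(P,Q)$ with $fu,v\in\ci$, applying $\cd(?,Y)$ for $Y\in\cy$ yields an exact sequence $\cd(P'',Y)\arr{\cd(v,Y)}\cd(P,Y)\arr{\cd(u,Y)}\cd(P',Y)$ in which $\cd(v,Y)=0$, so $\cd(u,Y)$ is a monomorphism; since $\cd(u,Y)\circ\cd(f,Y)=\cd(fu,Y)=0$ we get $\cd(f,Y)=0$ for every $Y\in\cy$, i.e.\ $f\in\ci$. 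Hence $\ci$ is saturated. (This part uses neither idempotency nor the algebraic hypothesis, and in fact shows that every ideal of the form $\cm or(\cd^c)^{\cy}$ is saturated.)

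For the converse, suppose $\ci$ is saturated. Put $\cy=\ci^{\bot}$ and let $(\cx,\cy,\cz)$ be the TTF triple attached to $\ci$ by Theorem~\ref{from ideals to devissage wrt hc}, so that $\cx=\Tria(\cp)$ for the set $\cp$ of Milnor colimits of sequences of morphisms of $\ci$, and a morphism of $\cd^c$ lies in $\ci$ exactly when it factors through an object of $\cp$. One always has $\ci\subseteq\cm or(\cd^c)^{\cy}$, so the point is $\cm or(\cd^c)^{\cy}\subseteq\ci$. Let $g\colon Q'\ra Q$ be a morphism of $\cd^c$ with $\cd(g,Y)=0$ for all $Y\in\cy$. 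Truncating $Q$ along $(\cx,\cy)$ as $x\tau_{\cx}Q\ra Q\arr{b}y\tau^{\cy}Q\ra x\tau_{\cx}Q[1]$ and using $\cd(g,y\tau^{\cy}Q)=0$, the composite $bg$ vanishes, so $g$ factors as $Q'\arr{a}x\tau_{\cx}Q\ra Q$ with $x\tau_{\cx}Q\in\cx$. Since $Q'$ is compact and $x\tau_{\cx}Q\in\cx$, Proposition~\ref{From smashing subcategories to ideals} factors $a$ as $Q'\arr{w}P^{*}\arr{c}x\tau_{\cx}Q$ with $P^{*}$ compact and $w$ factoring through an object of $\cx$. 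As $x\tau_{\cx}Q$ lies in $\Tria(\cp)$ it is a Milnor colimit of successive extensions of coproducts of shifts of objects of $\cp$; compactness of $Q'$ together with the shift-stability of $\cp$ then makes $w$ factor through a finite extension of finitely many shifts of objects of $\cp$, and the idempotency of $\ci$ permits such a finite extension to be absorbed into a single object of $\cp$. Hence $w$ factors through an object of $\cp$, i.e.\ $w\in\ci$, and therefore $g=(P^{*}\ra x\tau_{\cx}Q\ra Q)\circ w\in\ci$ because $\ci$ is a two-sided ideal. Thus $\ci$ is closed, completing the equivalence and the proof.

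I expect the main obstacle to be the last structural point in the converse: showing that a morphism out of a compact object which factors through $\Tria(\cp)$ already factors through a single object of $\cp$. This is precisely where the idempotency of $\ci$ is genuinely used — to recombine the finitely many shifts of Milnor colimits of $\ci$-sequences through which the morphism a priori factors into one such Milnor colimit — and it is the step that reflects, on the side of ideals, the ``derived idempotency'' of the bimodule $X$ attached to a homological epimorphism.
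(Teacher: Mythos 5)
Your reduction to Theorem~\ref{our result} and your proof of the easy implication (``closed $\Rightarrow$ saturated'') are both correct, and indeed mirror the paper: the paper also observes that every ideal of the form $\cm or(\cd^c)^{\cy}$ is saturated and then reduces to showing ``saturated $\Rightarrow$ closed.''

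The converse, however, has a genuine gap, and it is in exactly the place you flag as the main obstacle. You claim that once $w$ is seen to factor through a finite extension of finitely many shifts of objects of $\cp$, ``the idempotency of $\ci$ permits such a finite extension to be absorbed into a single object of $\cp$.'' This is not justified, and I do not believe it is true: idempotency is used in Theorem~\ref{from ideals to devissage wrt hc} to show that morphisms \emph{of} $\ci$ factor through objects of $\cp$, but it gives no mechanism for showing that a morphism from a compact object which factors through an \emph{extension} of objects of $\cp$ already factors through a single object of $\cp$. Notice, symptomatically, that your converse never invokes the saturation hypothesis at all, whereas the paper's proof of exactly this implication is a transfinite induction along a $\lambda$-sequence $X:\lambda\ra\cc\ca$ with successor cokernels in $\cp$, and it is precisely at the successor step that saturation is applied: after an octahedral manipulation one produces a triangle $N\arr{-c}Q'\arr{b_{t+1}u'}P_{t+1}\ra N[1]$ and a morphism $f$ with $b_{t+1}u'\in\ci$ and $-fc\in\ci$ (by the induction hypothesis), and only then does ``$\ci$ saturated'' yield $f\in\ci$. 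The algebraic hypothesis also enters essentially (via the $\ch$-projective filtration of Proposition~\ref{Stretching a filtration} and the choice of a $\lambda$-sequence presentation of $x\tau_{\cx}Q$), which your sketch bypasses. In short, the decomposition-and-absorption step needs to be replaced by the paper's transfinite induction, and saturation — not idempotency — is the hypothesis that closes the successor step.
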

\begin{proof}
It is easy to check that every closed ideal of $\cd^c$ is saturated. Therefore, thanks to Theorem \ref{General case} we just have to prove that, in this case, every saturated idempotent two-sided ideal $\ci$ of $\cd^c$ such that $\ci=\ci[1]$ is closed. For this, let $\ci$ be a saturated idempotent two-sided ideal of $\cd^c$ with $\ci[1]=\ci$. Consider the TTF triple $(\cx,\cy,\cz):=(^{\bot}(\ci^{\bot}),\ci^{\bot},(\ci^{\bot})^{\bot})$ associated to $\ci$ in Theorem \ref{from ideals to devissage wrt hc}, and let $\cp$ be a `set' of Milnor colimits of sequences of morphisms of $\ci$ as in the proof of that theorem. In particular, $\cp$ is closed under shifts and $\Tria(\cp)=\cx$. Put $\ci':=\cm or(\cd^c)^{\cy}$. Of course, $\ci\subseteq\ci'$. The aim is to prove the converse inclusion, which would imply that $\ci$ is closed. Let $f:Q'\ra Q$ be a morphism of $\ci'$ and consider the triangle
\[x\tau_{\cx}Q\arr{\delta}Q\arr{\eta}y\tau^{\cy}Q\ra x\tau_{\cx}Q[1].
\]
Since $\eta f=0$, then $f$ factors through $\delta$ via the following dotted arrow:
\[\xymatrix{ & Q'\ar[d]^{f}\ar@{.>}[dl]\ar[dr]^{0} & & \\
x\tau_{\cx}Q\ar[r]_{\delta} & Q\ar[r]_{\eta} & y\tau^{\cy}Q\ar[r] & x\tau_{\cx}Q[1]
}
\]
Theorem 4.3 of \cite{Keller1994a} says that we can assume that $\cd$ is the derived category $\cd\ca$ of a small dg category $\ca$. Also, Lemma \ref{Stretching a filtration} allows us to assume that $Q'$ belongs to the set $\cs$ described in subsection \ref{Stretching a filtration}. Thanks to Theorem \ref{Recollement-defining sets in aisled categories} we know that $x\tau_{\cx}Q$ can be taken to be, in the category of dg $\ca$-modules, the direct limit of a certain $\lambda$-sequence $X:\lambda\ra\cc\ca$, \ie a colimit-preserving functor $X$ starting in an ordinal $\lambda$ (\cf \cite[Definition 2.1.1]{Hovey1999}), such that:
\begin{enumerate}[-]
\item $X_{0}=0$,
\item for all $\alpha<\lambda$, the morphism $X_{\alpha}\ra X_{\alpha+1}$ is an inflation with cokernel in $\cp$.
\end{enumerate}
Then Lemma \ref{Stretching a filtration} implies that $f$ factors through a certain $X_{\alpha}$:
\[\xymatrix{ && Q'\ar[d]^{f}\ar@{.>}[dll]\ar[dr]^{0} & & \\
X_{\alpha}\ar[r]_{\can\ \ }&x\tau_{\cx}Q\ar[r]_{\delta} & Q\ar[r]_{\eta} & y\tau^{\cy}Q\ar[r] & x\tau_{\cx}Q[1]
}
\]
Let us prove, by transfinite induction on $\alpha$, that if a morphism of $\ci'$ factors through some $X_{\alpha}$ then it belongs to $\ci$.

\emph{First step:} For $\alpha=0$ it is clear since $X_{0}=0$.

\emph{Second step: Assume that every morphism of $\ci'$ factoring through $X_{\alpha}$ belongs to $\ci$.} Let $f$ be a morphism of $\ci'$ factoring through $X_{\alpha+1}$:
\[\xymatrix{Q'\ar[rr]^{f}\ar[dr]_{u} && Q \\
&X_{\alpha+1}\ar[ur]_{v}&
}
\]
In $\cd$ we have a triangle
\[X_{\alpha}\arr{a}X_{\alpha+1}\arr{b} P\arr{\gamma} X_{\alpha}[1]
\]
with $P\in\cp$, \ie $P$ is the Milnor colimit of a sequence
\[P_{0}\arr{g_{0}}P_{1}\arr{g_{1}}P_{2}\ra\dots
\]
of morphisms of $\ci$. By using compactness of $Q'$ we get a commutative diagram
\[\xymatrix{& Q'\ar[r]^{f}\ar[d]_{u}\ar@/_5pc/[dd]_{w} & Q & \\
X_{\alpha}\ar[r]^{a} & X_{\alpha+1}\ar[r]^{b}\ar[ur]^{v} & P\ar[r]^{\gamma} & X_{\alpha}[1] \\
& P_{t}\ar[ur]^{\pi_{t}}\ar[r]_{g_{t}} & P_{t+1}\ar[u]_{\pi_{t+1}}&
}
\]
where $\pi_{n}$ is the $n$th component of the morphism $\pi$ appearing in the definition of Milnor colimit. Consider the following commutative diagram in which the rows are triangles
\[\xymatrix{X_{\alpha}\ar[r]^{a_{t}}\ar@{=}[d] & M_{t}\ar[r]^{b_{t}}\ar[d]^{\psi} & P_{t}\ar[r]^{\gamma\pi_{t}}\ar[d]^{g_{t}} & X_{\alpha}[1]\ar@{=}[d] \\
X_{\alpha}\ar[r]^{a_{t+1}}\ar@{=}[d] & M_{t+1}\ar[r]^{b_{t+1}}\ar[d]^{\phi} & P_{t+1}\ar[r]^{\gamma\pi_{t+1}}\ar[d]^{\pi_{t+1}} & X_{\alpha}[1]\ar@{=}[d] \\
X_{\alpha}\ar[r]^{a} & X_{\alpha+1}\ar[r]^{b} & P\ar[r]^{\gamma} & X_{\alpha}[1]
}
\]
Since $\gamma\pi_{t}w=\gamma bu=0$, then $w=b_{t}w'$ for some morphism $w':Q'\ra M_{t}$. Therefore, $bu=\pi_{t}w=\pi_{t}b_{t}w'=b\phi\psi w'$, that is to say, $b(u-\phi\psi w')=0$ and so $u-\phi\psi w'=a\xi=\phi\psi a_{t}\xi$ for a certain morphism $\xi:Q'\ra X_{\alpha}$. Hence, $u=\phi\psi(w'+a_{t}\xi)$. Put $u':=\psi(w'+a_{t}\xi)$. We get the following commutative diagram
\[\xymatrix{& Q'\ar[r]^{f}\ar[d]_{u'} & Q & \\
X_{\alpha}\ar[r]_{a_{t+1}} & M_{t+1}\ar[r]_{b_{t+1}}\ar[ur]_{v\phi} & P_{t+1}\ar[r]_{\gamma\pi_{t+1}} & X_{\alpha}[1]
}
\]
Notice that $b_{t+1}u'=b_{t+1}\psi w'+b_{t+1}\psi a_{t}\xi=b_{t+1}\psi w'=g_{t}b_{t}w'$ belongs to $\ci$ since so does $g_{t}$. Apply the octahedron axiom
\[\xymatrix{Q'\ar[r]^{u'}\ar@{=}[d] & M_{t+1}\ar[d]^{b_{t+1}}\ar[r] & L\ar[r]\ar[d] & Q'[1]\ar@{=}[d] \\
Q'\ar[r] & P_{t+1}\ar[d]\ar[r] & N[1]\ar[d]^{n[1]}\ar[r]^{c[1]} & Q'[1]\ar[d]^{u'[1]} \\
& X_{\alpha}[1]\ar@{=}[r]\ar[d]_{-a_{t+1}[1]} & X_{\alpha}[1]\ar[d]\ar[r]_{-a_{t+1}[1]} & M_{t+1}[1] \\
& M_{t+1}[1]\ar[r] & L[1] &
}
\]
and consider the diagram
\[\xymatrix{N\ar[r]^{-c}&Q'\ar[r]^{b_{t+1}u'}\ar[d]^{f} & P_{t+1}\ar[r] & N[1] \\
&Q&&
}
\]
The morphism $-fc=-v\phi u'c=v\phi a_{t+1}n$ belongs to $\ci'$ (since so does $f$) and factors through $X_{\alpha}$. The hypothesis of induction implies that $-fc$ belongs to $\ci$. Since $b_{t+1}u'$ also belongs to $\ci$ and $\ci$ is saturated, then $f$ belongs to $\ci$.

\emph{Third step: Assume $\alpha$ is a limit ordinal and that every morphism of $\ci'$ factoring through $X_{\beta}$ with $\beta<\alpha$ belongs to $\ci$.} Then we have $X_{\alpha}=\lid_{\beta<\alpha}X_{\beta}$ and Lemma \ref{Stretching a filtration} ensures that we have a factorization
\[\xymatrix{&& Q'\ar[d]^{f}\ar@{.>}[dl]\ar@{.>}[dll] \\
X_{\beta}\ar[r]_{\can\ \ \ \ } & \lid_{\beta<\alpha}X_{\beta}\ar[r] & Q
}
\]
The hypothesis of induction implies that $f$ belongs to $\ci$.
\end{proof}

\end{document}